\DeclareMathAccent{\maxvec}{\mathord}{letters}{"7E}
\newtheorem{lemma}{Lemma}
\newtheorem{remark}{Remark}
\newcommand{\REV}[1]{\textcolor{black}{{#1}}}
\newcommand{\REVpk}[1]{\textcolor{black}{{#1}}}
\journal{Computer Methods in Applied Mechanics and Engineering}
\begin{document}
\begin{frontmatter}

                    
\title{Explicit synchronous partitioned scheme for coupled reduced order models based on composite reduced bases}


\cortext[cor1]{Corresponding author}
                    
\fntext[SC]{Department of Mathematics, Clemson University, USA}
\fntext[CCR]{Center for Computing Research, Sandia National Laboratories, Albuquerque, NM 87185, USA}
\fntext[SNLCA]{Quantitative Modeling \& Software Engineering Department, Sandia National Laboratories, P.O. Box 969, Livermore, CA 94551-0969, USA}
\fntext[sand-blurb]{\REVpk{This paper describes objective technical results and analysis. Any subjective views or opinions that might be expressed in the paper do not necessarily represent the views of the U.S. Department of Energy or the United States Government. Sandia National Laboratories is a multimission laboratory managed and operated by National Technology and Engineering Solutions of Sandia, LLC., a wholly owned subsidiary of Honeywell International, Inc., for the U.S. Department of Energy's National Nuclear Security Administration under contract DE-NA-0003525.}}
\author[SC,CCR]{Amy\ de Castro}
\ead{agmurda@clemson.edu}

\author[CCR]{Pavel Bochev\corref{cor1}}
\ead{pbboche@sandia.gov}

\author[CCR]{Paul Kuberry}
\ead{pakuber@sandia.gov}

\author[SNLCA]{Irina Tezaur}
\ead{ikalash@sandia.gov}

\begin{abstract}

This paper formulates, analyzes and demonstrates numerically a method for the \REV{explicit} partitioned solution of coupled interface problems involving combinations of projection-based reduced order models (ROM) and/or full order \REV{models} (FOMs). The method builds on the partitioned scheme developed in \cite{AdC:CAMWA}, which starts from a well-posed formulation of the coupled interface problem and uses its dual Schur complement to obtain an approximation of the interface flux. Explicit time integration of this problem decouples its subdomain equations and enables their independent solution on each subdomain. 
Extension of this partitioned scheme to coupled ROM-ROM or ROM-FOM problems requires formulations with non-singular Schur complements. To obtain these problems, we project a well-posed coupled FOM-FOM problem onto a composite reduced basis comprising separate sets of basis vectors for the interface and interior variables, and use the interface reduced basis as a Lagrange multiplier. 
Our analysis confirms that the resulting coupled ROM-ROM and ROM-FOM problems have provably non-singular Schur complements, independent of the mesh size and the reduced basis size. In the ROM-FOM case, analysis shows that one can also use the interface FOM space as a Lagrange multiplier.
We illustrate the theoretical and computational properties of the partitioned scheme through reproductive and predictive tests for a model advection-diffusion transmission problem.

\end{abstract}

\begin{keyword}
partitioned scheme \sep 
projection-based reduced order model (ROM) \sep 
interface \sep
transmission problem\sep 
\textit{inf-sup} condition \sep
proper orthogonal decomposition (POD) \sep
Galerkin method
\end{keyword}
\end{frontmatter}

\section{Introduction} \label{AdC:sec:intro}
Partitioned methods  are an attractive alternative to monolithic approaches for \REV{both single and multi-physics applications. In the first case such schemes \REVpk{can increase the concurrency of the simulation, improving computational efficiency,} by using an artificial interface to split the computational domain into several subdomains. In the second case, where the interface is physical,  partitioned schemes enable both increased concurrency and} reuse of existing codes for the constituent physics components; see, e.g., \cite{AdC:Felippa_01_CMAME} for an expository survey. Because each individual component is solved independently, the codes can run at their ``sweet spots'' utilizing, e.g., multi-rate time integrators \cite{AdC:Gravouil_01_IJNME}. 
Performance of partitioned schemes \REV{in both simulation contexts} can be further enhanced by replacing the full-fidelity models in one or more subdomains by computationally efficient projection-based reduced order models (ROMs). 

This work continues our efforts in \cite{DeCastro_23_INPROC} to extend the partitioned schemes in \cite{AdC:CAMWA} and \cite{AdC:RINAM} to interface problems in which a projection-based ROM on one of the subdomains is coupled to either a full order model (FOM) or a ROM on the other subdomain. In \cite{DeCastro_23_INPROC}, we defined the subdomain ROMs by utilizing full subdomain bases obtained by performing proper orthogonal decomposition (POD) 
\cite{AdC:Sirovich1987, AdC:Holmes1996}
on a collection of snapshots containing both the interior and interface degrees of freedom (DoFs). While this strategy is common in \REV{applications of} domain decomposition \REV{ideas to} ROM (see, e.g., \cite{AdC:Maday_02_CRM}), it does not guarantee that the dual Schur complement system for the Lagrange multiplier is non-singular. Unique solvability of this system is essential for the extension of the partitioned schemes in \cite{AdC:CAMWA} and \cite{AdC:RINAM} because the Lagrange multiplier defines a Neumann boundary condition on the interface, which allows us to \REV{obtain} well-posed subdomain equations that can be solved independently.

\REV{The main contribution of this paper is the formulation and analysis of an} alternative approach which utilizes a \emph{composite} reduced basis, comprising  independently constructed ROM bases for the interfacial and interior DoFs, instead of the conventional full subdomain reduced basis. To couple two ROMs across an interface, we then use the \emph{interface} part of the composite basis as a reduced order Lagrange multiplier space to enforce the interface conditions. Our analysis reveals that this approach leads to a provably non-singular Schur complement, independent of the underlying mesh size and/or composite reduced basis dimension. For the coupling of a ROM to a FOM, represented by a finite element model (FEM), this analysis indicates that one can use either the interface part of the composite basis from the ROM side or the interface finite element space from the FEM side of the interface as a Lagrange multiplier. 
We provide numerical results that corroborate numerically our theoretical findings. Results are shown on a two-dimensional (2D) time-dependent advection-diffusion  problem in the advection-dominated (high P\'{e}clet) regime.

\subsection{Related work}  \label{sec:lit_overview}

During the past two decades, the idea of coupling projection-based ROMs
with each other and with FOMs has been explored by a number of authors.
The bulk of the literature presents ROM-ROM or ROM-FOM coupling as a means for ``gluing" or
``tiling" these ROMs and/or FOMs together.
The focus is hence primarily on using domain decomposition (DD) as 
a vehicle to \emph{improve the efficiency of model order 
reduction} (MOR) for extreme scale problems and decomposable problems. 
The coupling approaches in the literature fall into roughly two categories: (1) monolithic 
coupling methods, and (2) iterative coupling methods.  
\REV{We succinctly review the literature on both method categories in Sections \ref{sec:monolithic} and \ref{sec:iterative}, respectively, and then in Section \ref{sec:organization} we highlight the key distinctions and contributions of this work.}



\subsubsection{Monolithic coupling methods} \label{sec:monolithic}

The majority of monolithic coupling methods in the MOR
community employ Lagrange multipliers to enforce compatibility
constraints.  Among the earliest works exploring DD 
to perform \REV{coupling of POD-based ROMs in an effort to improve the predictive 
accuracy}
 is the work of Lucia \textit{et al.} \cite{Lucia:2003}. 
Another early monolithic method 
for DD-based coupling 
of ROMs, this time constructed using the Reduced Basis Element (RBE) method,
is the work Maday \textit{et al.} \cite{AdC:Maday_02_CRM,AdC:Maday_04_SISC}. 
\REV{These methods are different from ours in that they rely} on
Lagrange multipliers represented by low-order polynomials \REV{(vs. POD modes)}
for imposing compatibility in a mortar-type method that ``glues" together non-overlapping subdomains. 
In \cite{Wicke_2009}, Wicke \textit{et al.}
present an approach for stitching together ``composable" ROM ``tiles", 
precomputed given specific boundary conditions, with the promise that the tiles can be assembled
in arbitrary ways at runtime. 
\REV{Continuity between tiles is enforced by duplicating the DoFs on the interfaces and constraining their normal components to be equal.}
\REV{ The Reduced basis method  with DD and Finite elements (RDF) \cite{AdC:Iapichino_16_CMA} is a conceptually similar, non-overlapping DD approach for gluing together networks of repetitive blocks. RDF uses standard finite element bases on the interfaces between the ROM domains to both enforce continuity and provide a sort of finite element enrichment.}
%
%
In \cite{AdC:Hoang}, Hoang \textit{et al.} present an algebraically non-overlapping method for coupling Least-Squares Petrov-Galerkin (LSPG)
ROMs with each other. 
\REV{This approach shares some commonality with the method developed herein, in that it 
considers several different types of subdomain and interface bases.}
\REV{Unlike our approach, \cite{AdC:Hoang} explores the use of both strong or weak compatibility}
conditions imposed at the
subdomain interfaces using Lagrange multipliers.  
%


It is also possible to \REV{effect} monolithic couplings without Lagrange multipliers through a judicious construction of the underlying discrete solution spaces. \REV{While these formulations are fundamentally different from our Lagrange multiplier-based 
approach, we succinctly review several methods falling into this category here for completeness.} 
\REV{The works  \cite{AdC:Iapichino_16_CMA} and \cite{Baiges_CMAME_2013}
propose monolithic DD-based strategies for ROM-ROM and
FOM-ROM coupling via local reduced order bases, which are carefully constructed 
to ensure automatic solution continuity across different subdomains.}
Another recent work that accomplishes \REV{monolithic} ROM-FOM coupling 
without Lagrange multipliers is \cite{Riffaud:2021}.
\REV{Unlike our approach, which does not require that any specific discretization 
method be used to discretize the governing PDE in space, the 
method in \cite{Riffaud:2021} is based on a discontinuous Galerkin (DG) formulation,
in which coupling is achieved through the definition of
numerical fluxes at discrete cell boundaries.}



It is interesting to remark that several DD-based ROM-ROM and ROM-FOM 
coupling methods with on-the-fly basis and/or DD adaptation have been
proposed in recent years.  \REV{While online model adaptation goes beyond the scope 
of the present manuscript, it may be considered in a future publication.}
In \cite{Corigliano:2015} and \cite{Corigliano:2013}, Corigliano \textit{et al.} develop a non-overlapping Lagrange 
multiplier-based coupling method for nonlinear elasto-plastic and multi-physics problems, 
\REV{in which  
on-the-fly ROM adaptation and ROM/FOM switching is performed through a plastic check during the reduced analysis.}
\REV{
Hybrid ROM-FOM coupling in the context of solid mechanics applications
is considered also in \cite{Kerfriden:2012, Kerfriden:2013}, where   
 a local/global
model reduction strategy for the simulation of quasi-brittle fracture is developed.}
An adaptive sub-structuring (domain decomposition)
non-overlapping approach for ROM-ROM and ROM-FOM coupling in solid mechanics 
is also presented in \cite{Radermacher:2014}.  
This method 
not only enables on-the-fly adaptation of the ROM basis, but also on-the-fly
substructuring/DD changes.
\REV{Furthermore, in} the pre-print \cite{Huang:2022}, Huang \textit{et al.} 
develop a component-based modeling
framework that can flexibly integrate ROMs and FOMs 
for different components or domain decompositions, 
towards modeling accuracy and efficiency for complex, large-scale
combustion problems. It is demonstrated that accuracy 
can be enhanced by incorporating basis adaptation ideas from 
\cite{Peherstorfer:2020, Huang:2023}. 

Finally, it is worth mentioning that another recent direction for hybrid ROM-FOM and ROM-ROM coupling 
involves the integration of ideas from machine learning into the coupling formulation.  
For example, in \cite{Ahmed:2021}, 
Ahmed \textit{et al.} present a hybrid ROM-FOM
approach in which 
a long short-term memory
network is introduced at the interface and subsequently used to perform the 
multi-model coupling.

\subsubsection{Iterative coupling methods} \label{sec:iterative}

\REV{While iterative coupling methods are fundamentally different from the monolithic couplings developed
in the present work, we overview several recent efforts falling into the iterative coupling category here 
for completeness.}
The majority of iterative methods for ROM-ROM and ROM-FOM coupling 
are based on the Schwarz alternating method \cite{Schwarz:1870}. 
Iterative coupling methods have the advantage that they are often 
less intrusive to implement in existing high-performance computing (HPC) codes \cite{Mota:2017, Mota:2022}; 
however, the methods' 
iterative nature can add to the total CPU time required to complete a simulation.
 
\REV{Among the earliest Schwarz-based DD approaches for coupling FOMs with ROMs is the work of Buffoni \textit{et al.}
 \cite{Buffoni:2007}, which focuses on Galerkin-free POD ROMs
developed for the Laplace equation and the compressible Euler equations.}
%
%
\REV{Galerkin-free FOM-ROM and ROM-ROM couplings are also considered by} Cinquegrana \textit{et al.}
\cite{Cinquegrana:2011} and Bergmann \textit{et al.} \cite{Bergmann_13_HAL}. 
The former approach \cite{Cinquegrana:2011} 
\REV{considers} overlapping DD in the context
of a Schwarz-like iteration scheme, but, unlike our approach, requires matching meshes at
the subdomain interfaces. 
The latter approach \cite{Bergmann_13_HAL}, termed zonal Galerkin-free POD, 
defines an optimization problem which minimizes the difference between the POD reconstruction
and its corresponding FOM solution in the overlapping region between a ROM and a FOM
domain.
A true POD-Greedy/Galerkin non-overlapping
Schwarz method for the coupling of projection-based ROMs developed for the specific case
of symmetric elliptic PDEs is presented by Maier \textit{et al.} in \cite{Maier:2014}.  
A more 
general POD/Galerkin ROM-ROM and ROM-FOM coupling method based on the 
overlapping or non-overlapping Schwarz method is developed in \cite{Barnett:2022}.
This work is an extension of an alternating Schwarz-based
 concurrent multi-scale FOM-FOM coupling method developed earlier
in \cite{Mota:2017, Mota:2022}.    
\REV{A recent work by Iollo \textit{et al.} \cite{Iollo:2023} on component-based model reduction via overlapping alternating Schwarz shows that, for  linear elliptic PDEs, the latter can be interpreted as an optimization-based coupling \cite{Du_00_SINUM}. By solving the optimization problem directly\REVpk{,} one obtains a ``One-Shot Schwarz" procedure.}

Alternative ROM-FOM approaches include the domain decomposition non-intrusive reduced-order model (DDNIR)
\cite{Xiao:2019}.  
 Here, a radial basis function
interpolation method is used to construct a set of hypersurfaces for 
iterative solution transfer between neighboring subdomains.

While \REV{our} focus is restricted to projection-based ROMs, it is worth noting that
the Schwarz alternating method has recently been extended to \REV{DD} coupling of Physics
Informed Neural Networks (PINNs) in \cite{LiDeepDDM, LiD3M}. The methods
proposed in these works, termed D3M \cite{LiD3M} and DeepDDM \cite{LiDeepDDM}, inherit the benefits of 
DD-based ROM-ROM couplings, but are developed primarily for the purpose of improving the
efficiency of the neural network training process and reducing the risk of over-fitting, 
both of which
are due to the global nature of the neural network ``basis functions".  
The Schwarz alternating method has also been used for online coupling of independently pre-trained subdomain-localized neural network-based 
models, e.g. in \cite{Wang:2022}, which develops a transferable framework for solving boundary value problems 
(BVPs) via deep neural networks that can
be trained once and used forever for various unseen domains and boundary conditions (BCs).  

\subsection{Differentiating contributions and organization} \label{sec:organization} 


The partitioned schemes for coupled ROM-ROM and ROM-FOM problems in this paper have some commonality with both the monolithic Lagrange multiplier-based coupling approaches described succinctly in Section \ref{sec:monolithic}, e.g., \cite{AdC:Maday_02_CRM,AdC:Maday_04_SISC,AdC:Iapichino_16_CMA,AdC:Huynh_13_M2AN, AdC:Hoang}, and the iterative coupling schemes summarized in Section \ref{sec:iterative} \REV{in the sense that they all focus on couplings between ROMs and/or ROMs and FOMs.} However, the work presented here differs from both of these types of methods in several important ways. 

Compared to the methods in Section \ref{sec:monolithic}, our main focus is \REV{on \emph{improving the simulation efficiency} for both single and multi-physics problems through \emph{explicit partitioned solution} of their coupled ROM-ROM and ROM-FOM formulations, rather than on \emph{improving the efficiency of the model order reduction} process  through domain-decomposition ideas.}
%
%
Second, although both DD-ROM methods and our partitioned scheme utilize Lagrange multipliers, the variational setting for the coupled ROM-ROM and ROM-FOM formulations in this paper differs from the one in a typical DD-based ROM\REVpk{; this is because ours} is designed to provide a provably non-singular Schur complement when the coupled ROM-ROM or ROM-FOM problems are discretized by an explicit time integrator. As explained in Remark \ref{rem:DD}, this makes our setting more ``forgiving'' to variational crimes and results in Schur complements whose conditioning is independent of the mesh size underpinning the FOM or the size of the composite reduced basis defining the ROM. 

Insofar as the iterative coupling methods are concerned, both our partitioned schemes and the methods utilizing the Schwarz alternating algorithm perform independent solves of decoupled subdomain problems. Additionally,  in both cases, the decoupling is effected by specifying boundary conditions on the interface that ``close'' the subdomain equations and make their independent solution possible. However, in the case of the Schwarz alternating method, one usually starts with an initial guess for the boundary condition and iterates until the subdomain solutions have converged sufficiently. The rate of convergence generally depends on the size of the overlap between the subdomains, which makes this type of methods more difficult to extend  to multiphysics problems where different subdomains may have different sets of governing equations. 
In contrast, our partitioned schemes define the interface boundary conditions by solving a Schur complement equation that provides a highly accurate estimate of the interface flux. Conceptually, this approach is similar to the techniques in \cite{Carey_85_CMAME} where one solves an additional problem to obtain more accurate approximation of the boundary flux than afforded by simply inserting the finite element solution into the flux function.

The remainder of this paper is organized as follows. 
\REV{Section \ref{AdC:sec:notation} introduces the bulk of the notation used in the paper.}
In Section \ref{AdC:sec:model}, we describe our model transmission problem (a transient \REV{scalar}
advection-diffusion problem) and define the coupled FOM-FOM formulation\REV{\REVpk{, }which provides the basis for the development of the coupled ROM-ROM and ROM-FOM problems.}
\REV{For convenience, in Section \ref{sec:IVR}, we also briefly summarize the partitioned Implicit Value Recovery (IVR) scheme \cite{AdC:CAMWA}.}
 Section \ref{sec:POD} overviews 
projection-based model reduction using the POD/Galerkin method and introduces the reduced order basis spaces that will be used in the paper.
Section \ref{sec:ROM-ROM} is the core of this paper, where we use the composite reduced basis idea to formulate the  IVR scheme for the partitioned solution of two ROMs coupled across an interface. Next, in Section \ref{sec:ROM-FOM}, we describe how the IVR scheme can be extended to the partitioned solution of coupled ROM-FOM problems.  
In Section \ref{sec:analysis}, we use variational techniques to prove that the coupled ROM-ROM and ROM-FOM formulations have non-singular Schur complements, which is the key prerequisite for the extension of the IVR scheme to these couplings. 
Numerical results demonstrating the proposed scheme's accuracy 
and efficiency for reproductive as well as predictive ROMs are presented in Section \ref{sec:num}.  
Finally, conclusions are offered in Section \ref{sec:conc}.


\section{\REV{Notation}}\label{AdC:sec:notation}
\begin{figure}[t!]
  \begin{center}
    \includegraphics[width=0.45\textwidth]{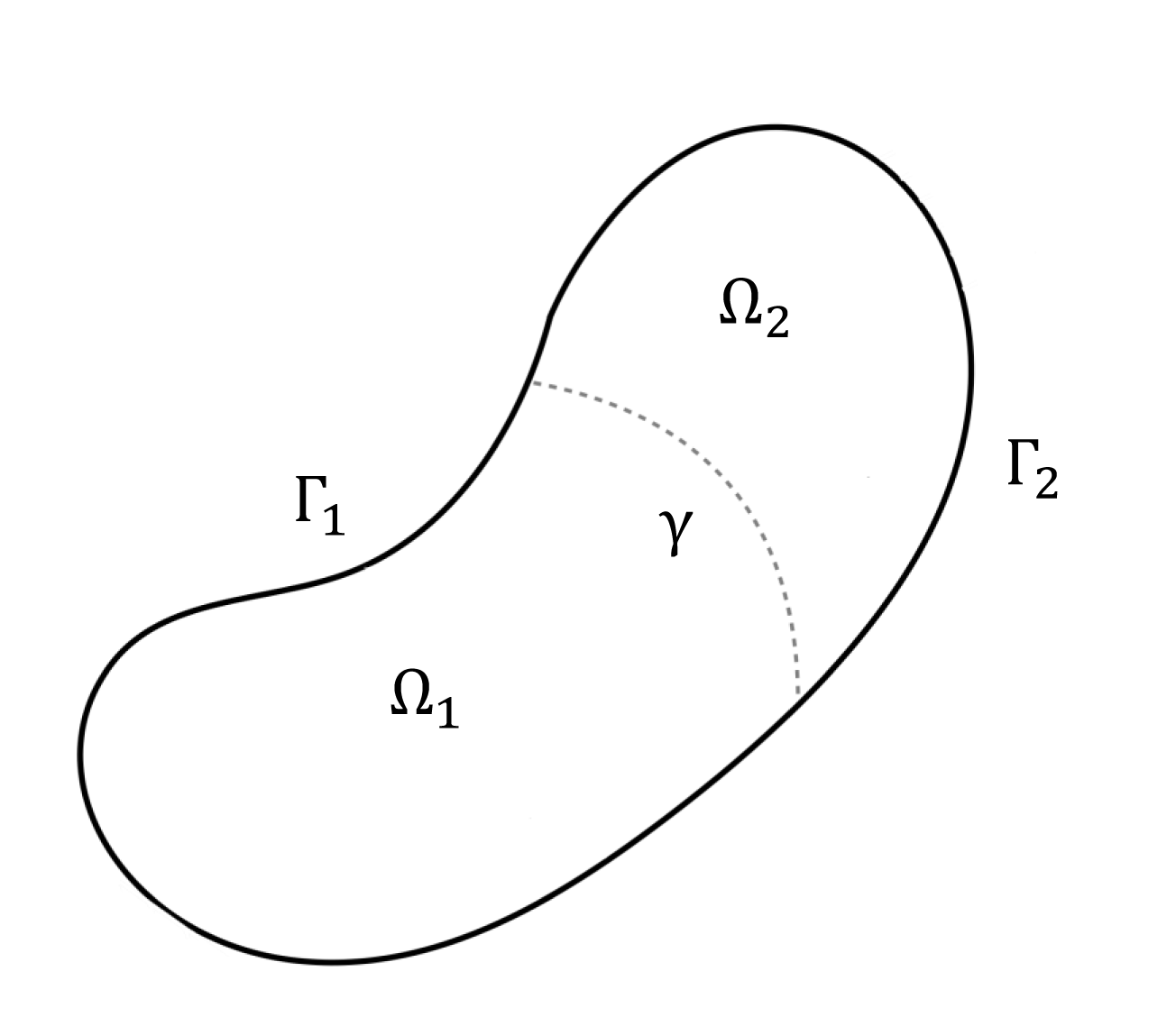}
  \end{center}
\vspace{-3ex}
\caption{Example partitioning of 2D domain $\Omega$ into two non-overlapping subdomains, $\Omega_1$ and 
$\Omega_2$, with interface boundary $\gamma$.} \label{fig:dd}
\end{figure}
For the convenience of the reader and ease of reference, this section summarizes the bulk of the notation used throughout the paper.
We consider a bounded region $\Omega \in \mathbb{R}^\nu$, $\nu = 2,3$ divided into two\footnote{This configuration of the model transmission problem possesses all the characteristics relevant for the development of the partitioned schemes. Extension of these schemes to transmission problems with more than two domains is conceptually similar to the case of two subdomains.}
 non-overlapping subdomains $\Omega_1$ and $\Omega_2$ by an interface $\gamma$, as shown in Figure \ref{fig:dd}. Without a loss of generality, we assume that the unit normal $\bm{n}_\gamma$ points towards $\Omega_2$, and set $\Gamma_i := \partial \Omega_i  \backslash \gamma$, $i=1,2$. 

We use the standard notation $L^2(\Omega_i)$ for the space of all square integrable functions in $\Omega_i$ with norm and inner product denoted by $\|\cdot\|_{0,\Omega_i}$ and $(\cdot,\cdot)_{0,\Omega_i}$, respectively. Likewise, ${H}^1(\Omega_i)$ will denote the Sobolev space of all square integrable scalar functions on $\Omega_i$ whose first derivatives are also square integrable and ${H}^1_{D}(\Omega_i)$ will be the subspace of  ${H}^1(\Omega_i)$ whose elements  vanish on  $\Gamma_i$. Restrictions of ${H}^1(\Omega_i)$ functions to $\gamma$ form the fractional Sobolev space $H^{1/2}(\gamma)$, with dual $H^{-1/2}(\gamma)$ and duality pairing $\langle\cdot,\cdot\rangle_{\gamma}$.

In this paper we consider quasi-uniform partitions $\Omega^h_i$  of  $\Omega_i$ with mesh parameter $h_i$, vertices  $\bm{x}_{i,r}$, and elements $K_{i,s}$. We assume that each subdomain is meshed independently and denote the finite element partition of the interface induced by $\Omega^h_i$ as $\gamma^h_i$. To avoid technicalities that are not germane to the subject of this paper, we shall assume that $\gamma^h_1$ and $\gamma^h_2$ are spatially coincident; however, their vertices are not required to match. Similarly, $\Gamma^h_i$ will be the finite element partition of the Dirichlet boundary induced by $\Omega^h_i$. 

\begin{remark}\label{rem:dbc}
If a node $\bm{x}_{i,r}$ lies on both $\Gamma_i$ and $\gamma$, it is treated as belonging to the finite element partition $\Gamma^h_i$ of the Dirichlet boundary rather than that of the interface; see Figure~\ref{fig:nodes}. The reason for this is that the DoFs at these nodes are assigned the nodal values of the boundary data and they contribute to the right-hand sides of the discrete equations, i.e., these DoFs are not unknown solution coefficients. 
\end{remark}

\begin{figure}[t!]
  \begin{center}
    \includegraphics[width=0.8\textwidth]{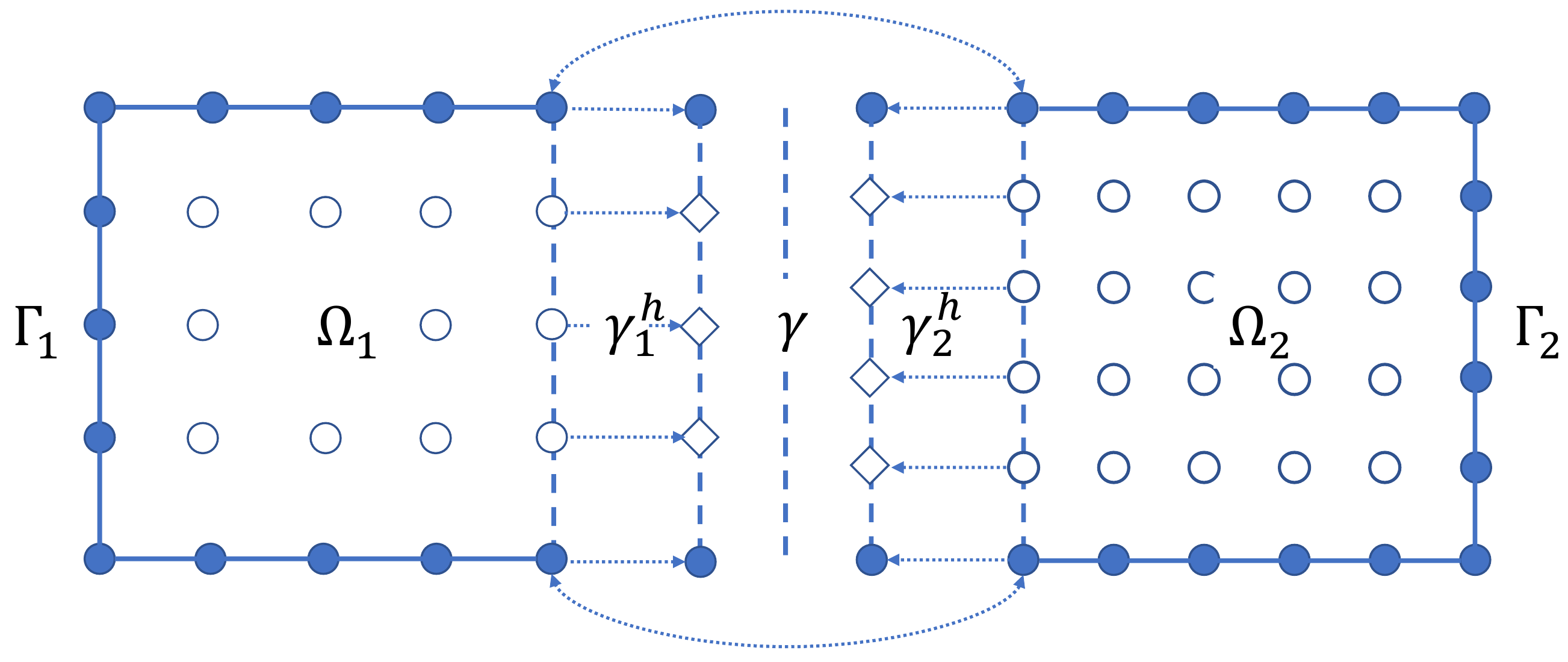}
  \end{center}
  \vspace{-3ex}
\caption{Node partitions of the finite element mesh. Solid disks depict Dirichlet nodes forming the space $S^h_{i,\Gamma}$; circles depict interior and interface nodes forming the space $S^h_{i,D}$; diamonds depict interface nodes forming the spaces $S^h_{i,\gamma}$ and $G^h_i$. Arrows indicate duplicate nodes that correspond to the same DoFs.} \label{fig:nodes}
\end{figure}

In what follows, $S_{i}^h$ will denote the lowest-order  nodal $C^0$ conforming finite element  subspace of ${H}^1(\Omega_i)$, defined with respect to $\Omega^h_i$; see, e.g., \cite{Ciarlet_02_BOOK}. We equip $S_{i}^h$ with a standard Lagrangian basis $\{N_{i,r}\}$, i.e., $N_{i,r}(\bm{x}_{i,s}) = \delta_{rs}$, where $\delta_{rs}$ is the Kronecker $\delta$-symbol.
We denote the subspace of $S_{i}^h$ comprising all finite element functions that vanish on $\Gamma_i$ by $S_{i,D}^h$. This subspace is a conforming approximation of the Sobolev space ${H}^1_{D}(\Omega_i)$. 
Let $n_{i,\Gamma}$, $n_{i,\gamma}$ and $n_{i,0}$ denote the numbers of mesh nodes on the Dirichlet boundary $\Gamma_i$, the interface $\gamma$, and the interior of the subdomain $\Omega_i$, respectively; see Figure~\ref{fig:nodes}.  
Thus, $n_{i,D}= n_{i,0}+n_{i,\gamma}$ and $n_{i} = n_{i,D} + n_{i,\Gamma} = n_{i,0}+n_{i,\gamma}+n_{i,\Gamma}$.

The coefficients of a finite element function $u^h_i\in S_{i}^h$ form a vector $\bm{u}_i\in\mathbb{R}^{n_i}$. 
Without loss of generality, we can assume that the nodes of $\Omega^h_i$ are numbered such that this coefficient vector has the form  
$\bm{u}_i = (\bm{u}_{i,\gamma},\bm{u}_{i,0},\bm{u}_{i,\Gamma})$, where 
$\bm{u}_{i,\gamma}\in\mathbb{R}^{n_{i,\gamma}}$, 
$\bm{u}_{i,0}\in\mathbb{R}^{n_{i,0}}$, and 
$\bm{u}_{i,\Gamma}\in\mathbb{R}^{n_{i,\Gamma}}$
are vectors of interface, interior, and Dirichlet coefficients, respectively. With this convention,
 it is easy to see that the coefficient vector of a finite element function $u^h_{i,D}\in S^h_{i,D}$ has the form $\bm{u}_{i,D}=(\bm{u}_{i,\gamma},\bm{u}_{i,0},\bm{0})$.

Besides $S^h_{i,D}$, we shall need three additional subspaces of $S^h_i$. The first one contains all finite element functions whose coefficient vectors have the form $(\bm{u}_{i,\gamma},\bm{0},\bm{0})$, i.e., they vanish at all but the interface nodes. We denote this space by $S^h_{i,\gamma}$ and call it the \emph{interface} part of $S^h_i$. The finite element functions in the second subspace have coefficient vectors $(\bm{0},\bm{u}_{i,0},\bm{0})$. These functions are identically zero on both the interface and the Dirichlet boundary. We term this subspace the \emph{interior} part of $S^h_i$ and denote it by $S^h_{i,0}$. The last subspace of $S^h_i$ contains all finite element functions with coefficients $(\bm{0},\bm{0},\bm{u}_{i,\Gamma})$. These functions vanish at all nodes except those on the Dirichlet boundary $\Gamma_i$. We denote this space by $S^h_{i,\Gamma}$ and call it the \emph{boundary} part of $S^h_i$. Note that  
$$
S^h_i = S_{i,\gamma}^h\cup S_{i,0}^h\cup S^h_{i,\Gamma}\
\quad\mbox{and}\quad
S^h_{i,D} = S_{i,\gamma}^h\cup S_{i,0}^h\,.
$$

Formally, the subspaces  $S^h_{i,D}$, $S^h_{i,\gamma}$, $S^h_{i,0}$,  and $S^h_{i,\Gamma}$ have the same dimension $n_i$ as their parent space $S^h_i$. However, when discussing the assembled algebraic forms of the 
weak formulations, it will be more convenient to remove the zero blocks from the coefficient vectors and associate  $u^h_{i,D}\in S^h_{i,D}$, $u^h_{i,\gamma}\in S^h_{i,\gamma}$, $u^h_{i,0}\in S^h_{i,0}$,  and $u^h_{i,\Gamma}\in S^h_{i,\Gamma}$ with coefficient vectors 
$\bm{u}_{i,D}\in\mathbb{R}^{n_{i,D}}$, 
$\bm{u}_{i,\gamma}\in\mathbb{R}^{n_{i,\gamma}}$, 
$\bm{u}_{i,0}\in\mathbb{R}^{n_{i,0}}$, and 
$\bm{u}_{i,\Gamma}\in\mathbb{R}^{n_{i,\Gamma}}$, respectively. 
In this context, we will refer to $n_{i,D}$, $n_{i,\gamma}$, $n_{i,0}$, and $n_{i,\Gamma}$ as the \emph{effective} dimensions of their respective finite element subspaces. 
Finally, we define the \emph{induced} interface finite element space $G_i^h$ as the trace of the interface part of $S^h_i$, i.e., $G_{i}^h = S_{i,\gamma}^h \big|_\gamma$.  Since $dim\, G_{i}^h = dim\, S_{i,\gamma}^h = n_{i,\gamma}$, the coefficient spaces of $G_{i}^h$ and $S_{i,\gamma}^h$ are isomorphic, i.e., a vector $\bm{c}\in \mathbb{R}^{n_{i,\gamma}}$ can be mapped to a function $u^{h,\bm{c}}_{i,\gamma}\in S_{i,\gamma}^h$, or a function $\lambda^{h,\bm{c}}_{i}\in G^h_i$.


\section{\REV{Model problem and its coupled FOM-FOM formulation}}\label{AdC:sec:model}
This section defines the model transmission problem, the associated weak coupled formulation and its semi-discretization in space\footnote{We emphasize that, while the high-fidelity models herein are assumed to be constructed
using the finite element method, our partitioned solution approach is easily extensible to FOMs 
constructed using alternate discretization approaches such as finite volume and finite difference methods.} by finite elements.
\REV{\REVpk{The} coupled FOM-FOM problem is key to the development of partitioned schemes for coupled ROM-ROM and ROM-FOM problems in this paper. For completeness, Section \ref{sec:IVR} briefly reviews the IVR scheme for the FOM-FOM problem.}

\REV{We consider the scalar} advection-diffusion transmission equation
\begin{align}\label{AdC:strongForm}
\begin{split}
\dot{u}_i - \nabla \cdot F_i (u_i) &= f_i \hspace{5mm} \text{ on } \Omega_i \times [0,T] \\
u_i &= g_i     \hspace{5mm} \text{ on } \Gamma_i \times [0,T] \\
u_i & = u_{i,0}\hspace{2mm} \text{ on } \Omega_i, \hspace{2mm} t=0
\end{split}
 \;\quad i=1,2, 
\end{align}
where the over-dot notation denotes differentiation in time, the unknown $u_i:=u_i(\bm{x},t)$ is a 
scalar field, $F_i(u_i) = \kappa_i \nabla u_i - \mathbf{a} u_i$ is the total flux function, $f_i:=f_i(\bm{x},t)$ is a source term, $g_i:=g_i(\bm{x},t)$ is prescribed boundary data, $u_{i,0}:=u_{i,0}(\bm{x})$, is a prescribed initial condition, $\kappa_i:=\kappa_i(\bm{x},t) > 0$ is the diffusion coefficient in $\Omega_i$, and $\mathbf{a}:=\mathbf{a}(\bm{x},t)$ is the advection field.  
%
%
Along the interface $\gamma$, we enforce continuity of the ``velocities'' $\dot{u}_i$ and continuity of the total flux, giving rise to the following interface conditions:
\begin{align}\label{AdC:interfaceConditions}
\dot{u}_1(\bm{x},t) - \dot{u}_2(\bm{x},t) = 0 \hspace{2mm} \text{ and } F_1(\bm{x},t) \cdot \mathbf{n}_\gamma = F_2(\bm{x},t) \cdot \mathbf{n}_\gamma \hspace{2mm} \text { on } \gamma \times [0,T].
\end{align}
\REV{We choose this problem because it allows us to conveniently demonstrate the partitioned methods developed in this paper in both simulation contexts by setting $\kappa_1=\kappa_2$ and  $\kappa_1\neq\kappa_2$, respectively.}

\begin{remark}\label{rem:index1}
The use of ``velocity'' continuity  in lieu of the more conventional continuity of the states coupling condition is required to obtain a  coupled \REV{FOM-FOM} formulation that, under some conditions on the Lagrange multiplier space, is a Hessenberg Index-1 Differential Algebraic Equation (DAE) \cite{Ascher_98_BOOK}. In such DAEs the algebraic variable  (the Lagrange multiplier) is an implicit function of the differential variables (the subdomain states). This fact is at the core of the IVR formulation as it allows one to solve for the Lagrange multiplier in terms of the subdomain states. It also motivates the term ``implicit'' in the name of the scheme. 
\end{remark}

We define the coupled weak formulation of \eqref{AdC:strongForm} by using a Lagrange multiplier to enforce the first constraint in  \eqref{AdC:interfaceConditions}. To \REV{that end,} we write the solution as 
 $u_{i} = u_{i,D}+g_{i}$ where $u_{i,D}\in {H}^1_{D}(\Omega_i)$ is the interior (unknown) component of $u_{i}$ and, with some abuse of notation, $g_i\in {H}^1(\Omega_i)$ is a lifting of the boundary data. The weak form of \eqref{AdC:strongForm}--\eqref{AdC:interfaceConditions} is then given by the variational equation: \emph{seek} $\{u_{1,D}, u_{2,D}, \lambda\}: (0,T]\mapsto {H}^1_{D}(\Omega_1)\times {H}^1_{D}(\Omega_2)\times {H}^{-1/2}(\gamma)$  \emph{such that}  $u_i  = u_{i,0}$ for $t=0$, $i=1,2$, and for $t>0$
\begin{equation}\label{eq:weaF_form_LM}
\begin{array}{rcll}
   \left( \dot{u}_{1,D} , v_1 \right)_{0,\Omega_1} + \left \langle \lambda,  v_1 \right\rangle_{\gamma}
   & =& \left( f_1, v_1 \right)_{0,\Omega_1} - \left( F_1(u_{1,D}), \nabla v_1\right)_{0,\Omega_1} 
   - Q_1(\dot{g}_{1},g_1;v_1)
   &\quad \forall v_1 \in H^1_{\Gamma}(\Omega_1)
   \\[1ex]
   \left( \dot{u}_{2,D}, v_2 \right)_{0,\Omega_2} - \left \langle \lambda,  v_2 \right\rangle_{\gamma}
   & = &\left( f_2, v_2 \right)_{0,\Omega_2} - \left( F_2(u_{2,D}), \nabla v_2\right)_{0,\Omega_2}  
  - Q_2(\dot{g}_{2},g_2;v_2)
   &\quad \forall v_2 \in H^1_{\Gamma}(\Omega_2)
   \\[1ex]
   \left\langle \dot{u}_{1,D}- \dot{u}_{2,D}, \mu \right \rangle_\gamma  
   &= &  \left\langle \dot{g}_{1}- \dot{g}_{2}, \mu \right \rangle_\gamma  
   &\quad \forall \mu \in H^{-1/2}(\gamma).
 \end{array}
\end{equation}
In \eqref{eq:weaF_form_LM}, the $Q_i$, for $i=1,2$, denote bilinear forms producing the contributions from the boundary data to the right-hand sides of the subdomain equations. The variational equation \eqref{eq:weaF_form_LM} is of the mixed type.  Using the theory in \cite{Brezzi_74_RAIRO}, one can show that  \eqref{eq:weaF_form_LM} is well-posed. 

\begin{remark}\label{rem:interface-BC}
Since the Dirichlet data are supposed to satisfy the first coupling condition in \eqref{AdC:interfaceConditions}, the boundary data contribution $\left\langle \dot{g}_{1}- \dot{g}_{2}, \mu \right \rangle_\gamma$ to the right-hand side of the constraint equation in \eqref{eq:weaF_form_LM} is identically zero. However, in general, a discretized version of this term will not be identically zero and boundary contributions need to be properly accounted for in the assembled discrete problem.  
\end{remark}

\subsection{\REV{The coupled FOM-FOM problem}}\label{sec:FOM-FOM}
\REV{To obtain the coupled FOM-FOM problem, we discretize the weak formulation \eqref{eq:weaF_form_LM} in space by approximating the subdomain states $\{u_1,u_2\}$ and the Lagrange multiplier $\lambda$ with the finite element spaces $V^h = S^h_{1}\times S^h_{2}$ and  $W^h = G^h_k$, $k=1$ or $k=2$, respectively.} This choice of $W^h$ is common for mortar element methods  \cite{Bernardi_05_GAMM,Bernardi_94_INPROC} and it also ensures  the well-posedness of the IVR scheme for the FOM-FOM problem.
%
%

To handle the Dirichlet boundary conditions,
 we proceed similarly to  \eqref{eq:weaF_form_LM} and write the finite element solution as $u^h_i = u^h_{i,D}+ g^h_{i}$, where $u^h_{i,D}\in S^h_{i,D}$ is the unknown part of $u^h_i$ and $g^h_{i}\in S^h_{i,\Gamma}$ is the finite element interpolant of the boundary data. Thus, the coefficient vector of $u^h_i$ is given by $\bm{u}_i = (\bm{u}_{i,D},\bm{g}_i)$ where $\bm{u}_{i,D}\in \mathbb{R}^{n_{i,D}}$  is the coefficient vector containing the unknown nodal values of the solution, and $\bm{g}_{i}\in \mathbb{R}^{n_{i,\Gamma}}$ is a coefficient vector containing the known nodal values of the boundary data $g_i$.
To obtain the \REV{coupled FOM-FOM} problem, we approximate the duality pairing $\langle\cdot,\cdot\rangle_{\gamma}$ by the $L^2$ inner product\footnote{Remark \ref{rem:DD} provides some additional information about this choice for the interface inner product.} $(\cdot,\cdot)_{0,\gamma}$ and restrict \eqref{eq:weaF_form_LM} to $V^h$ and $W^h$. The resulting problem can be written in the following compact matrix form:
\begin{align}\label{AdC:eq:Idx1System}
\begin{bmatrix}
M_{1,D} & 0 & G_{1,D}^T \\[1ex]
0 & M_{2,D} & -G_{2,D}^T\\[1ex]
G_{1,D} & -G_{2,D} & 0
\end{bmatrix}
\begin{bmatrix}
\bm{\dot{u}}_{1,D} \\[1ex]
\bm{\dot{u}}_{2,D} \\[1ex]
\bm{\lambda}
\end{bmatrix}
= 
\begin{bmatrix}
\bm{f}_{1,D} - F_{1,D} \bm{u}_{1,D} - Q_{1,\Gamma}( \dot{\bm{g}}_{1}, \bm{g}_{1}) \\[1ex]
\bm{f}_{2,D} - F_{2,D} \bm{u}_{2,D} - Q_{2,\Gamma}( \dot{\bm{g}}_{2}, \bm{g}_{2}) \\[1ex]
- Q_{\gamma,\Gamma}(\dot{\bm{g}}_{1},\bm{\dot{g}}_{2})
\end{bmatrix},
\end{align}
where, for $i=1,2$, $M_{i,D}$ and $F_{i,D}$ are $n_{i,D}\times n_{i,D}$ mass and flux matrices, respectively,  
$G_{i,D}$ are $n_{k,\gamma}\times n_{i,D}$ matrices defining the algebraic form of the ``velocity'' constraint in \eqref{AdC:interfaceConditions}, $\bm{\lambda}\in\mathbb{R}^{n_{k,\gamma}}$ is the coefficient vector of the discrete Lagrange multiplier, and $\bm{f}_{i,D}\in \mathbb{R}^{n_{i,D}}$ is the coefficient vector of the source term.
The terms 
$$
Q_{i,\Gamma}( \dot{\bm{g}}_{i}, \bm{g}_{i})= M_{i,\Gamma} \dot{\bm{g}}_{i} + F_{i,\Gamma} \bm{g}_{i} , \quad i=1,2
\quad\mbox{and}\quad
Q_{\gamma,\Gamma}(\dot{\bm{g}}_{1},\bm{\dot{g}}_{2})
 = G_{1,\Gamma}\dot{\bm{g}}_{1} - G_{2,\Gamma} \bm{\dot{g}}_{2},
$$
where $M_{i,\Gamma}$ and $F_{i,\Gamma}$ are $n_{i,D}\times n_{i,\Gamma}$ ``partial'' mass and flux matrices, and $G_{i,\Gamma}$ are $n_{k,\gamma}\times n_{i,\Gamma}$ ``partial'' constraint matrices, provide the contributions from the boundary data interpolants to the right-hand sides of the subdomain equations. Note that the matrix blocks in \eqref{AdC:eq:Idx1System} are dimensioned using the effective dimensions of the finite element spaces. 

\begin{remark}\label{rem:partial-mat}
The ``partial'' constraint matrices $G_{i,\Gamma}$ are, in general, very sparse. For example, in two-dimensions, each $G_{i,\Gamma}$ will have at most two non-zero elements corresponding to the two Dirichlet nodes at the endpoints of $\gamma$; see Figure~\ref{fig:nodes}. Although $\dot{\bm{g}}_1 = \dot{\bm{g}}_2$ at these nodes, the integrals of the finite element interpolants $\dot{g}^h_1$ and $\dot{g}^h_2$ against the Lagrange multiplier basis functions will not be identical unless the nodes adjacent to the endpoints of $\gamma$ match on both sides of the interface. This is to be contrasted with the continuous problem where the Dirichlet data does not contribute to the constraint equation; see Remark \ref{rem:interface-BC}.
\end{remark}

\subsection{\REV{The Implicit Value Recovery (IVR) scheme for the coupled FOM-FOM problem}}\label{sec:IVR}
\REV{In this section, we briefly review the IVR scheme \cite{AdC:CAMWA} for the coupled FOM-FOM problem \eqref{AdC:eq:Idx1System}. This scheme solves the linear system}
\begin{equation}\label{eq:FOM-FOM-flux}
\begin{array}{rcll}
S\bm{\lambda} & =& G_{1,D} M_{1,D}^{-1} \left[\bm{f}_{1,D} - F_{1,D} \bm{u}_{1,D} - Q_{1,\Gamma}( \dot{\bm{g}}_{1}, \bm{g}_{1})\right]\\
&&\; - G_{2,D} M_{2,D}^{-1} \left[\bm{f}_{2,D} - F_{2,D} \bm{u}_{2,D} - Q_{2,\Gamma}( \dot{\bm{g}}_{2}, \bm{g}_{2}) \right] + Q_{\gamma,\Gamma}(\bm{\dot{g}}_1, \bm{\dot{g}}_2),
\end{array}
\end{equation}
\REV{where}
\begin{equation}\label{eq:IVR-schur}
S = G_{1,D}M^{-1}_{1,D}G^T_{1,D} + G_{2,D}M^{-1}_{2,D}G^T_{2,D}
\end{equation}
\REV{is the dual Schur complement of the matrix on the left hand side of \eqref{AdC:eq:Idx1System}.  
This is used 
to compute a highly accurate approximation of the interface flux\footnote{\REV{In contrast, ``loosely coupled'' partitioned schemes use the ``raw'' solution state from each side of the interface to specify boundary conditions that close each subdomain equation and make possible its independent solution. Mathematically, such schemes can be viewed as performing a single step of a non-overlapping alternating Schwarz iterative coupling procedure; see Section \ref{sec:iterative}. This is also the root cause for some of the stability and accuracy issues experienced by these methods.}} $\bm{\lambda}$, which then serves as a Neumann boundary condition for the subdomain equations.}
\REV{As a result, the well-posedness of the IVR scheme hinges on the invertibility of the Schur complement  matrix $S$. A sufficient condition for \eqref{eq:IVR-schur} to be symmetric and positive definite is that the transpose constraint matrix  has a full column rank. One can show that  if the Lagrange multiplier space $W^h$ is defined as in Section \ref{sec:FOM-FOM}, i.e., as the trace $G^h_k$ of the interface finite element space $S^h_{k,\gamma}$ on either of $\Omega_1$ or $\Omega_2$, the matrix $G^T =(G_{1,D}, -G_{2,D})^T$ does indeed have this property.} 
%

\REV{Assuming a non-singular Schur complement \eqref{eq:IVR-schur}, the IVR scheme for \eqref{AdC:strongForm} comprises the following two steps. First, one solves \eqref{eq:FOM-FOM-flux} for the Lagrange multiplier $\bm{\lambda}$ and eliminates it from \eqref{AdC:eq:Idx1System}. This reduces the coupled FOM-FOM problem to a coupled system of two ordinary differential equations (ODEs):}
\begin{align}\label{AdC:eq:Idx1ODESystem}
\begin{bmatrix}
M_{1,D} & 0 \\
0 & M_{2,D} \\
\end{bmatrix}
\begin{bmatrix}
\bm{\dot{u}}_{1,D} \\
\bm{\dot{u}}_{2,D} \\
\end{bmatrix}
= 
\begin{bmatrix}
\bm{f}_{1,D} - F_{1,D} \bm{u}_{1,D} - G^T_{1,D} \bm{\lambda}(\bm{u}_{1,D},\bm{u}_{2,D};\dot{\bm{g}}_{1},\dot{\bm{g}}_{2}) 
-  Q_{1,\Gamma}( \dot{\bm{g}}_{1}, \bm{g}_{1}) \\
\bm{f}_{2,D} - F_{2,D} \bm{u}_{2,D} +G^T_{2,D}   \bm{\lambda}(\bm{u}_{1,D},\bm{u}_{2,D};\dot{\bm{g}}_{1},\dot{\bm{g}}_{2}) -  Q_{2,\Gamma}( \dot{\bm{g}}_{2}, \bm{g}_{2}) \\
\end{bmatrix} \,.
\end{align}
The ODE sub-systems in \eqref{AdC:eq:Idx1ODESystem} define the associated \emph{subdomain} FOMs. 
It is easy to see that an explicit time discretization of \eqref{AdC:eq:Idx1ODESystem} decouples this problem and allows one to advance the solution to the next time step by solving the subdomain FOMs \emph{independently}; see \cite{AdC:CAMWA}. \REV{Thus, the second step of IVR consists of applying explicit time integrators to each subdomain FOM.} The subdomain time integrators are not required to be the same and they can also use different time steps over shared synchronization time intervals.

Because decoupling of \eqref{AdC:eq:Idx1ODESystem} is effected solely by \REV{explicit time integration, it is not accompanied by} any splitting errors as is the case with traditional loosely coupled partitioned schemes. In particular, the IVR scheme fully retains the stability and the accuracy properties of the underlying coupled problem. In fact, one can show that, for some settings, the IVR solution is identical to the solution of the coupled problem.


\section{Projection-based model order reduction (MOR)} \label{sec:POD}
In this section, we briefly review the basic concepts of the MOR 
approach used in this paper to develop the IVR scheme for the partitioned solution of coupled problems involving
subdomain-local ROMs coupled to other subdomain-local ROMs or to FOMs. We then specialize some aspects of the generic MOR process to the model transmission problem that is the focus of this paper.

\subsection{A generic POD-based MOR workflow}\label{sec:pod-RB}
The approach for constructing a projection-based ROM consists of  two critical steps: (1) calculation of a reduced basis (RB), and (2) projection of the governing equations onto the reduced basis. These two steps are described succinctly in the following paragraphs.

\paragraph{Reduced basis construction via the POD}
One of the most popular approaches for calculating a reduced basis
 is the POD \cite{AdC:Holmes1996,
AdC:Sirovich1987}.  To discuss POD, consider a generic FOM given by
\begin{equation} \label{eq:genericFOM}
M \dot{\bm{u}} = \bm{f}(\bm{u}), 
\end{equation} 
where $M \in \mathbb{R}^{n\times n}$ and $\bm{u}, \bm{f} \in \mathbb{R}^{n}$.  The FOM \eqref{eq:genericFOM} can be thought of as resulting from a spatial discretization of some set of governing PDEs. 

To obtain the POD basis, one simulates \eqref{eq:genericFOM} and collects its solutions $\bm{u}^m$, $m=1,\ldots,r$ into an $n\times r$ snapshot matrix  $X.$ 
Typically, the snapshots $\bm{u}^m$ are taken to be the primary solution field at different times and/or 
different parameter values. POD works by first computing the singular value decomposition (SVD)
$X=U\Sigma V^T$ of the snapshot matrix. Then, one chooses a positive integer $0< d \le n$ that defines the accuracy of the reduced basis. The value of $d$ is typically selected using a ``snapshot energy" criterion,
where the ``snapshot energy'' is defined as 
\begin{align}\label{AdC:eq:snapEnergy}
	\mathcal{E}:=\frac{\sum_{i=1}^{d} \sigma_i^2}{\sum_{i=1}^n \sigma_i^2},
\end{align} 
with $\sigma_i$ denoting the $i^{th}$ singular value of $X$. Specifically, let $\delta$ be a desired threshold for the retention of the snapshot energy. The integer $d$ is then defined as the smallest integer such that 
\begin{equation}\label{eq:delta}
\sum_{i=1}^{d} \sigma_{i}^2 \geq \Big(1 - \delta \Big)\sum_{i=1}^{n} \sigma_{i}^2.
\end{equation}
Typically, one seeks a reduced basis that  captures 95\% or 99\% of the snapshot energy, i.e., $\mathcal{E} \approx 0.95$ or $\mathcal{E} \approx 0.99$. This corresponds to thresholds $\delta=0.05$ or $\delta=0.01$. 

Once $d$ is determined according to \eqref{eq:delta} the  $n\times d$ 
POD reduced basis  matrix, denoted herein by ${\REV{\Phi}}$, is defined by taking the first $d$ left singular vectors of $X$, i.e., the first $d$ columns of $U$. Construction of the POD basis 
can be interpreted as an approximation of the snapshot set $X$ by its truncated SVD: $X\approx  \widetilde{X} = {\REV{\Phi}}\widetilde{\Sigma}\widetilde{V}^T$.
In order to achieve a meaningful order reduction of the FOM, $d$ must be 
much smaller than the dimension $n$ of the FOM. 
We remark that this requires a 
sharp decay of the singular values, which holds for our model problem 
but is not true in general for problems with a slow decay of the so-called 
Kolmogorov $n$-width \cite{Greif:2019}; see, e.g., \cite{Abgrall_16_AMSES,Abgrall_18_IJNMF,Crisovan_19_JCAM}.

Once the reduced basis ${\REV{\Phi}}$ is calculated using the above workflow, the 
FOM solution $\bm{u}$ is approximated as a linear combination of these reduced basis modes and unknown 
time-dependent modal amplitudes $\widetilde{\bm{u}}\in\mathbb{R}^d$: 
\begin{equation} \label{eq:modal_decomp}
\bm{u}(t) \approx  \bar{\bm{u}} +
 {\REV{\Phi}} \widetilde{\bm{u}}(t).
\end{equation}
In \eqref{eq:modal_decomp}, $\bar{\bm{u}}\in\mathbb{R}^n$ is a reference state, commonly 
selected as the initial condition, base flow or snapshot mean.  
An important detail in our formulation is that $\bar{\bm{u}}$ can be a
function of time, i.e., $\bar{\bm{u}}:=\bar{\bm{u}}(t)$.  
As we show 
below, $\bar{\bm{u}}$ can also be used to enforce time-varying 
Dirichlet boundary conditions strongly within the POD-based ROM.  


\paragraph{Galerkin projection} 
Given a reduced basis ${\REV{\Phi}}$, the next step is to project the FOM onto this  basis.  
Here, we restrict our discussion to an approach called ``discrete Galerkin projection", 
where the governing equations in their semi-discretized form \eqref{eq:genericFOM}  (i.e., discretized only in space)
are projected onto the POD basis in the discrete $l^2$ inner product.  
Projecting \eqref{eq:genericFOM} onto a reduced basis ${\REV{\Phi}}$ and substituting the 
modal decomposition \eqref{eq:modal_decomp} yields:
\begin{equation} \label{eq:genericROM}
{\REV{\Phi}}^T M{\REV{\Phi}} \dot{\widetilde{\bm{u}}} = 
{\REV{\Phi}}^T\bm{f}(\bar{\bm{u}} + {\REV{\Phi}}\widetilde{\bm{u}}) 
- {\REV{\Phi}}^T M \dot{\bar{\bm{u}}}.
\end{equation}
%
In the present context,  the role of the FOM \eqref{eq:genericFOM} will be played by the coupled FOM-FOM problem \eqref{AdC:eq:Idx1System}. Details of the Galerkin projection for this problem are given in Sections \ref{sec:ROM-ROM} and \ref{sec:ROM-FOM}. 

\begin{remark}
While we focus our attention on the POD method 
for reduced basis construction and on the Galerkin method for the projection step, we emphasize 
that our approach is not limited to these methods and can be applied to any reduced order formulation of the coupled FOM-FOM problem that has a provably non-singular Schur complement. 
%
%
We note that, for nonlinear problems, a third step, 
known as hyper-reduction, is needed to treat efficiently
the projection of the nonlinear terms in the governing equations.  
A variety of approaches for hyper-reduction exist in the literature, e.g., the
Discrete Empirical Method (DEIM) \cite{DEIM}, gappy POD \cite{Everson:95}, or the Energy Conserving
Sampling and Weighting (ECSW) method \cite{FarhatECSW}.  The partitioned solver developed herein is easily extendable to nonlinear problems, but we omit  a detailed discussion of hyper-reduction as our numerical experiments focus on linear 
problems.   It is straightforward to see that both the reduced basis construction 
and Galerkin projection steps of our model reduction procedure can be precomputed offline, as shown explicitly later, in Sections \ref{sec:ROM-ROM} and \ref{sec:ROM-FOM}.   
\end{remark}

\subsection{Reduced basis sets for the transmission problem}\label{sec:rb-model}
We now specialize the first step of the generic POD-based MOR workflow in Section \ref{sec:pod-RB} 
to obtain the reduced basis sets that will be used in this work. The second step, i.e., the Galerkin projection onto the reduced basis, will be discussed in Sections \ref{sec:ROM-ROM}--\ref{sec:ROM-FOM}.
Let $X_{i}$ denote a set of $r_i$ snapshots on $\Omega_i$, $i=1,2$. 
The columns of  $X_{i}$ are the coefficient vectors $\bm{u}^m_{i}$ of finite 
element solutions $u^{h,m}_{i}\in S^h_{i}$, $m=1,\ldots,r_i$. 
Thus, $X_{i}$ is an $n_i\times r_i$ matrix. 
The columns of $X_i$ can be partitioned as 
$\bm{u}^m_{i} =(\bm{u}^m_{i,\gamma},\bm{u}^m_{i,0},\bm{g}^m_{i})$, where the 
coefficient subvectors are defined in  Section \ref{AdC:sec:model}. 
To handle the Dirichlet boundary conditions, we adopt an approach similar to the one in 
 \cite{AdC:GunzburgerBCs}. Specifically, we remove the subvectors $\bm{g}^m_i$  
corresponding to the Dirichlet nodes to obtain the  $n_{i,D}\times r_i$ adjusted snapshot matrix $X_{i,D}$. The $m^{th}$ column of this matrix is given by the vector $(\bm{u}^m_{i,\gamma},\bm{u}^m_{i,0})$ for $\bm{u}^m_{i,\gamma} \in \mathbb{R}^{n_{i,\gamma}}$, $\bm{u}^m_{i,0} \in \mathbb{R}^{n_{i,0}}$; see Figure~\ref{fig:snapshots}.  
%
\begin{figure}[t!]
  \begin{center}
    \includegraphics[width=0.9\textwidth]{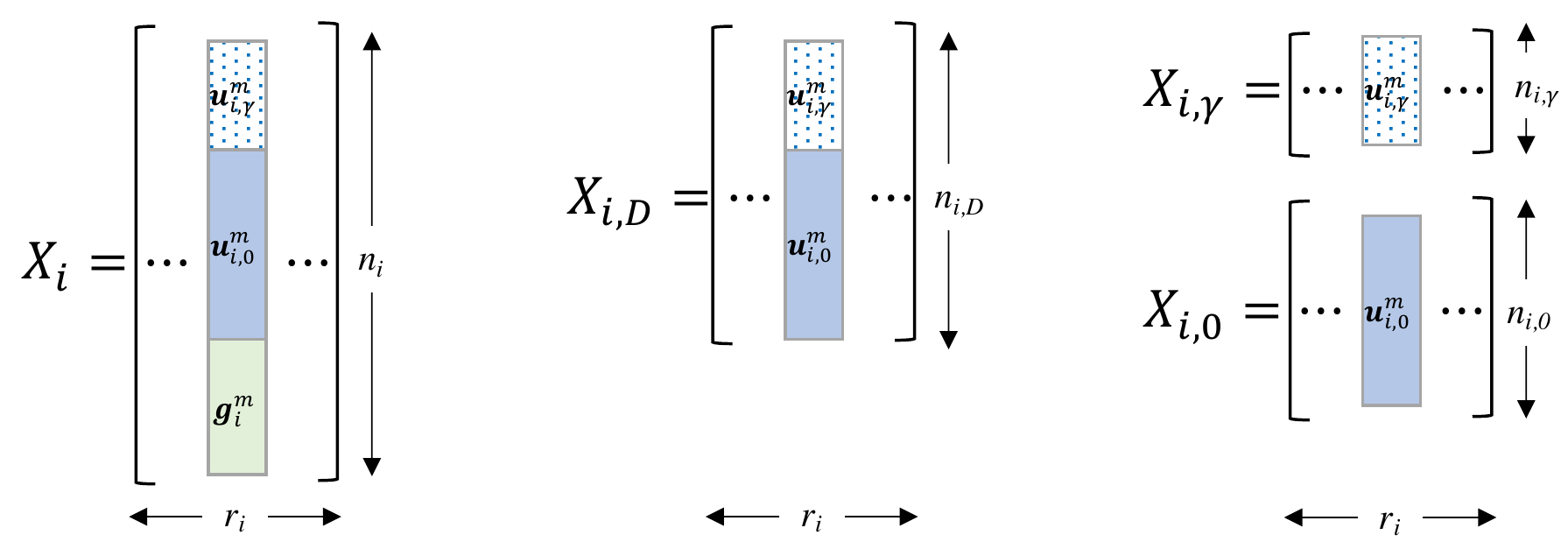}
  \end{center}
		\caption{Left to right: the original snapshot matrix $X_i$, the adjusted snapshot matrix $X_{i,D}$, the interior $X_{i,0}$ and interface $X_{i,\gamma}$ snapshot matrices.} \label{fig:snapshots}
\end{figure}
%
We further split the adjusted snapshot matrix into an $n_{i,0}\times r_i$ submatrix $X_{i,0}$ containing all interior nodal values of the snapshots and an $n_{i,\gamma}\times r_i$ submatrix $X_{i,\gamma}$  containing all interface nodal values of the snapshots. Thus, the columns of $X_{i,0}$ and $X_{i,\gamma}$ are given  by the coefficient vectors $\bm{u}^m_{i,0}$ and  $\bm{u}^m_{i,\gamma}$, respectively.  Figure \ref{fig:snapshots} 
illustrates the construction of these companion snapshot matrices.

Next, for $i=1,2$, we apply the POD basis construction to $X_{i,D}$, $X_{i,0}$ and $X_{i,\gamma}$. Specifically, 
we: (i) compute the SVDs of these matrices, (ii) choose the integers $0<d_{i,D}\ll n_{i,D}$, $0<d_{i,0}\ll n_{i,0}$ ,and $0<d_{i,\gamma}\ll n_{i,\gamma}$  that define the percent snapshot energy captured by the reduced basis for each respective set of snapshots, and (iii) form the $n_{i,D}\times d_{i,D}$, $n_{i,0}\times d_{i,0}$, and $n_{i,\gamma}\times d_{i,\gamma}$ reduced bases 
${\REV{\Phi}}_{i,D}$, ${\REV{\Phi}}_{i,0}$, and ${\REV{\Phi}}_{i,\gamma}$, respectively. 
%
Because the columns of ${\REV{\Phi}}_{i,D}$ 
contain both the interior and interface DoFs on $\Omega_i$, 
in the literature they are usually referred to as the \emph{full subdomain bases} \cite{AdC:Hoang}. 
We include these bases because they are ubiquitous in methods that use DD as a vehicle to improve the efficiency of the MOR workflow; see; e.g.,  \cite{AdC:Iapichino_16_CMA, AdC:Hoang, Wicke_2009}. 
Similarly, we refer to ${\REV{\Phi}}_{i,0}$ and ${\REV{\Phi}}_{i,\gamma}$ as the \emph{interior} and \emph{interface} reduced bases, respectively. Once ${\REV{\Phi}}_{i,D}$,  ${\REV{\Phi}}_{i,0}$, and ${\REV{\Phi}}_{i,\gamma}$ are obtained, one can approximate the coefficients of the FOM solution on $\Omega_i$, for $i=1,2$, as either 
\begin{equation} \label{eq:modal_decomp2}
\bm{u}_i(t) \approx \left( {\REV{\Phi}}_{i,D} \widetilde{\bm{u}}_{i,D}(t), \bm{g}_i(t)\right)
 \quad\mbox{or}\quad
\bm{u}_i(t) \approx  
\left({\REV{\Phi}}_{i,\gamma}
\widetilde{\bm{u}}_{i,\gamma}(t), 
{\REV{\Phi}}_{i,0}
\widetilde{\bm{u}}_{i,0}(t),\bm{g}_i(t)
\right),
 \end{equation}
where $\widetilde{\bm{u}}_{i,D}(t)\in\mathbb{R}^{d_{i,D}}$, $\widetilde{\bm{u}}_{i,\gamma}(t)\in\mathbb{R}^{d_{i,\gamma}}$, and $\widetilde{\bm{u}}_{i,0}(t)\in\mathbb{R}^{d_{i,0}}$
are unknown  time-dependent modal amplitudes. It is straightforward to see that both  ROM solutions in  
\eqref{eq:modal_decomp2} will satisfy the prescribed boundary conditions by construction.
In this context, the reference state in \eqref{eq:modal_decomp}
is given by $\bar{\bm{u}}_i(t) = (\bm{0},\bm{g}_{i}(t))$, with $\bm{0} \in \mathbb{R}^{n_{i,D}}$.

\paragraph{Reduced bases for the Lagrange multiplier} \label{sec:lm}
Because our FOM is given by the coupled problem \eqref{AdC:eq:Idx1System} in which the interface conditions are enforced by Lagrange multipliers, its Galerkin projection also requires a suitable reduced basis for the Lagrange multiplier. Such a basis can be obtained either independently 
from the RB matrices for the states or reusing them in a suitable way. In the first case,
 one collects $r_\gamma$ snapshots from some generic 
Lagrange multiplier space $G^h_\gamma$ into an 
$n_{\gamma}\times r_\gamma$ snapshot matrix $X_{\gamma}$ and then 
follows the same procedure as above  to obtain an 
$n_{\gamma}\times d_{\gamma}$ reduced basis matrix 
${\REV{\Phi}}_{\gamma}$. In this paper, we use solely 
the second approach and define ${\REV{\Phi}}_{\gamma}$ using 
either ${\REV{\Phi}}_{i,D}$ or ${\REV{\Phi}}_{i,\gamma}$.  
The Lagrange multiplier is 
then approximated using its reduced basis as
\begin{equation}\label{eq:LM-proj}
\bm{\lambda}(t) \approx {\REV{\Phi}}_{\gamma} \widetilde{\bm{\lambda}}(t),
\end{equation}
where $\widetilde{\bm{\lambda}}(t)\in\mathbb{R}^{d_{\gamma}}$ is an unknown  time-dependent modal amplitude.

\begin{remark}
Although the construction of the reduced bases is purely algebraic, 
their columns represent coefficient vectors of finite 
element functions in $S^h_i$, for $i=1,2$, and $G^h_\gamma$. Specifically, 
using the columns of ${\REV{\Phi}}_{i,D}$, ${\REV{\Phi}}_{i,\gamma}$,
 ${\REV{\Phi}}_{i,0}$, and ${\REV{\Phi}}_{\gamma}$ as coefficients in 
an expansion in terms of the nodal basis functions, one obtains 
functions belonging in $S^h_{i,D}$, $S^h_{i,\gamma}$, $S^h_{i,0}$, 
and $G^h_\gamma$, respectively. These finite element functions can be viewed as
 basis sets spanning \emph{reduced subspaces} (RS) 
$\widetilde{S}^h_{i,D}$, $\widetilde{S}^h_{i,\gamma}$, $\widetilde{S}^h_{i,0}$, and $\widetilde{G}^h_\gamma$ of their respective parent finite element spaces. 
This functional viewpoint of 
the reduced bases will be convenient when analyzing the properties of the 
partitioned IVR schemes for the coupled ROM-ROM and ROM-FOM formulations.  This analysis 
is deferred until Section \ref{sec:analysis}.   
\end{remark}

\section{An IVR scheme for coupled ROM-ROM problems}\label{sec:ROM-ROM}

In this section, we formulate an IVR scheme for the partitioned solution of two ROMs coupled across an interface. 
Formally such a scheme can be obtained by projecting the coupled FOM-FOM problem \eqref{AdC:eq:Idx1System} onto reduced subspaces $\widetilde{V}^h\subset V^h$ and $\widetilde{W}^h\subset W^h$ and then using the Schur complement of the resulting coupled ROM-ROM problem to calculate an accurate approximation of the interface flux. 

However, successful execution of this plan requires one to take into consideration the fact that Galerkin projection of mixed problems does not automatically preserve their stability properties; see, e.g., \cite{Rozza_07_CMAME}. In the present context this means that the Schur complement of the coupled ROM-ROM problem is not guaranteed to be non-singular even if the Schur complement of its parent coupled FOM-FOM has this property. 

A sufficient condition for \eqref{AdC:eq:Idx1System} to have a non-singular Schur complement was established in \cite{AdC:CAMWA}, and requires every Lagrange multiplier to be a trace of a finite element function from one of the two sides of the interface. In Section \ref{sec:analysis}, we prove that a similar \emph{trace compatibility} condition ensures that Galerkin projection of \eqref{AdC:eq:Idx1System} also has this property; see Remark \ref{lem:trace}.
However, this condition imposes restrictions on the choices of the reduced bases for the subdomain states and the Lagrange multiplier. In particular, the trace compatibility condition makes the full subdomain bases ${\REV{\Phi}}_{i,D}$ (for $i=1,2$) less than an ideal choice for the extension of the IVR scheme to coupled ROM-ROM problems. 
To explain the issues and motivate our approach, let us examine more closely the Galerkin projection of \eqref{AdC:eq:Idx1System} onto the  full subdomain bases. Such a projection uses the first ansatz in \eqref{eq:modal_decomp2} to approximate the subdomain states, i.e., 
\begin{equation}\label{AdC:eq:ansatz}
\bm{u}_{i} := \left({\REV{\Phi}}_{i,D} \widetilde{\bm{u}}_{i,D}, \bm{g}_i\right); \quad i=1,2\,,
\end{equation}
where $\widetilde{\bm{u}}_{i,D}\in \mathbb{R}^{d_{i,D}}$ are time-dependent modal amplitudes (reduced order states). 
One then has to select a \REV{reduced basis (RB)}  ${\REV{\Phi}}_{\gamma}$ for the Lagrange multiplier that is \emph{trace-compatible} with \eqref{AdC:eq:ansatz}. To construct ${\REV{\Phi}}_{\gamma}$ note that every column $\REV{\bm{\phi}}^j_{i,D}\in\mathbb{R}^{n_{i,D}}$ of the full subdomain basis ${\REV{\Phi}}_{i,D}$ can be partitioned as  $\REV{\bm{\phi}}^j_{i,D} = (\REV{\bm{\phi}}^j_{i,D,\gamma},\REV{\bm{\phi}}^j_{i,D,0})$, where $\REV{\bm{\phi}}^j_{i,D,\gamma}\in\mathbb{R}^{n_{i,\gamma}}$ and $\REV{\bm{\phi}}^j_{i,D,0}\in\mathbb{R}^{n_{i,0}}$ are sub-vectors corresponding to interface and interior degrees of freedom, respectively. 
Let ${\REV{\Phi}}_{i,D,\gamma}$ denote the $n_{i,\gamma}\times d_{i,D}$ matrix whose $j$th column is given by $\REV{\bm{\phi}}^j_{i,D,\gamma}$. It is easy to see that ${\REV{\Phi}}_{\gamma} := {\REV{\Phi}}_{k,D,\gamma}$ for $k=1$ or $k=2$ is a trace-compatible RB for the Lagrange multiplier:  every function in the  reduced space $\widetilde{G}^h_\gamma$ spanned by ${\REV{\Phi}}_{\gamma}$ is a trace of a function in the reduced space $\widetilde{S}^h_{k,D}$ spanned by ${\REV{\Phi}}_{k,D}$.
However, since sub-vectors of a linearly independent set of vectors are not necessarily linearly independent on their own, the reduced order basis ${\REV{\Phi}}_{i,D,\gamma}$ is not guaranteed to have a full column rank. 

This is a serious drawback for the development of the IVR scheme as it is easily seen that rank-deficiency of ${\REV{\Phi}}_{i,D,\gamma}$ will lead to rank-deficiency of transposed projected constraint matrix $\widetilde{G}^T = (\widetilde{G}_{1,D},-\widetilde{G}_{2,D})^T$, thereby resulting in coupled ROM-ROM problems whose Schur complement is not invertible. 
Of course, one can prune the redundant basis functions from ${\REV{\Phi}}_{i,D,\gamma}$ by computing its SVD  and throwing away all left singular vectors corresponding to zero singular values. Unfortunately, this solution also suffers from some serious flaws. 
First, the ``pruned'' RB for the Lagrange multiplier may fail to satisfy the original ``snapshot energy" criterion \eqref{eq:delta} used to select the full subdomain basis ${\REV{\Phi}}_{i,D}$. Second, to preserve trace compatibility one would have to project the interface states $\bm{u}_{k,\gamma}$ using the ``pruned'' RB, while the interior states $\bm{u}_{k,0}$ will have to be projected using the RB ${\REV{\Phi}}_{k,D,0}$ whose columns are given by the sub-vectors $\REV{\bm{\phi}}^j_{i,D,0}$. It is clear that ${\REV{\Phi}}_{k,D,0}$ may suffer from the same issues as ${\REV{\Phi}}_{k,D,\gamma}$, that is, it may be rank-deficient. 
%

Instead of trying to extract a trace-compatible RB for the Lagrange multiplier from the full subdomain basis and potentially lose its optimality with respect to the snapshot energy criterion, a more robust strategy is to ensure trace compatibility from the onset by using separate RBs for the interior and interface variables. The following section describes the construction of a coupled ROM-ROM formulation based on this idea.

\subsection{A coupled ROM-ROM based on a composite reduced basis} \label{AdC:sec:split-basis}
Our strategy for securing a coupled ROM-ROM problem with a provably non-singular Schur complement has two key ingredients. The first one is the projection of the state $\bm{u}_{i,D}$ using pairs (thus the term ``composite'' RB) $\{{\REV{\Phi}}_{i,\gamma},{\REV{\Phi}}_{i,0}\}_{i=1,2}$ of \emph{independently} computed interface and interior RBs instead of a single full subdomain RB. 
The second ingredient is achieving trace compatibility by selecting one of the two interface RBs as \REVpk{a} RB for the Lagrange multiplier, i.e., we set ${\REV{\Phi}}_{\gamma} = {\REV{\Phi}}_{k,\gamma}$ for $k=1$ or $k=2$. This choice mimics the  one in \eqref{AdC:eq:Idx1System} and, as we shall prove in Section \ref{sec:analysis}, ensures that the Schur complement of the coupled ROM-ROM problem is non-singular. It also has some similarities with the techniques in \cite{AdC:Eftang_13_IJNME} and \cite{AdC:Hoang}.  

To  project the coupled FOM-FOM \eqref{AdC:eq:Idx1System} using the composite RB, we apply separate projections to the interior and to the interface DoFs. Thus, instead of  \eqref{AdC:eq:ansatz}, we use the second ansatz in \eqref{eq:modal_decomp2} and set
\begin{equation}\label{AdC:eq:ansatz-separate}
\bm{u}_i = 
\left( 
{\REV{\Phi}}_{i,\gamma} \widetilde{\bm{u}}_{i,\gamma},
{\REV{\Phi}}_{i,0} \widetilde{\bm{u}}_{i,0} ,\bm{g}_i
\right)
\quad\mbox{and}\quad
\bm{\lambda} = {\REV{\Phi}}_{k,\gamma} \widetilde{\bm{\lambda}}.
\end{equation}
Here, the time-dependent modal amplitudes $\widetilde{\bm{u}}_{i,\gamma}\in\mathbb{R}^{d_{i,\gamma}}$, $\widetilde{\bm{u}}_{i,0}\in\mathbb{R}^{d_{i,0}}$, and $\widetilde{\bm{\lambda}}\in\mathbb{R}^{d_{k,\gamma}}$ represent the reduced order interface and interior states and the reduced order Lagrange multiplier, respectively. 
Following Section \ref{sec:pod-RB},
 we insert \eqref{AdC:eq:ansatz-separate} into \eqref{AdC:eq:Idx1System} and multiply the blocks corresponding to the interior, interface,
 and Lagrange multiplier DoFs by ${\REV{\Phi}}_{i,0}^T$, ${\REV{\Phi}}_{i,\gamma}^T$, and ${\REV{\Phi}}_{k,\gamma}^T$ respectively. These steps yield the following \emph{composite reduced basis} coupled ROM-ROM formulation:
\begin{align}\label{AdC:eq:SeparatedRR}
\begin{split}
&\begin{bmatrix}
\widetilde{M}_{1,\gamma \gamma} & \widetilde{M}_{1, \gamma 0} & 0  & 0 &  \widetilde{G}_{1,\gamma}^T \\[1ex]
\widetilde{M}_{1,0 \gamma} & \widetilde{M}_{1,0 0} & 0 & 0 & 0\\[1ex]
0 & 0 & \widetilde{M}_{2,\gamma \gamma} & \widetilde{M}_{2,\gamma 0} & -\widetilde{G}_{2,\gamma}^T \\[1ex]
0 & 0 & \widetilde{M}_{2,0 \gamma} & \widetilde{M}_{2,0 0} & 0 \\[1ex]
\widetilde{G}_{1,\gamma} & 0 & -\widetilde{G}_{2,\gamma} & 0 & 0
\end{bmatrix}
\begin{bmatrix}
\dot{\widetilde{\bm{u}}}_{1,\gamma} \\[1.25ex] 
\dot{\widetilde{\bm{u}}}_{1,0} \\[1.25ex]
\dot{\widetilde{\bm{u}}}_{2,\gamma} \\[1.25ex]
\dot{\widetilde{\bm{u}}}_{2,0} \\[1.25ex]
 \widetilde{\bm{\lambda}}
\end{bmatrix}
=
\begin{bmatrix}
 \widetilde{\bm{s}}_{1,\gamma} \\[1.35ex]
  \widetilde{\bm{s}}_{1,0} \\[1.35ex]
  \widetilde{\bm{s}}_{2,\gamma} \\[1.35ex]
  \widetilde{\bm{s}}_{2,0} \\[1.35ex] 
 \widetilde{\bm{s}}_{\gamma}
\end{bmatrix}
\end{split}
\,.
\end{align} 
The block structure of \eqref{AdC:eq:ansatz-separate} is induced by the projection onto the composite RB space. 
Specifically, we have that,  for $i=1,2$,
 \begin{align*}
 \begin{bmatrix}
\widetilde{\bm{s}}_{i,\gamma} \\[1ex] 
\widetilde{\bm{s}}_{i,0}
\end{bmatrix} 
= 
\begin{bmatrix}
\widetilde{\bm{f}}_{i,\gamma} \\[1ex] 
\widetilde{\bm{f}}_{i,0} 
\end{bmatrix}
-
 \begin{bmatrix}
 \widetilde{F}_{i,\gamma \gamma} & \widetilde{F}_{i,\gamma 0} \\[0.5ex]
 \widetilde{F}_{i, 0 \gamma} & \widetilde{F}_{i,0 0}
 \end{bmatrix}
\begin{bmatrix}
 \widetilde{\bm{u}}_{i,\gamma} \\[1ex] \widetilde{\bm{u}}_{i,0}
 \end{bmatrix}  
 -
 \begin{bmatrix}
  {\REV{\Phi}}_{i,\gamma}^T Q_{i,\gamma \Gamma}(\dot{\bm{g}}_i,\bm{g}_i) \\[1ex]
  {\REV{\Phi}}_{i,0}^T Q_{i,0 \Gamma}(\dot{\bm{g}}_i,\bm{g}_i)  
 \end{bmatrix}\,;
 \quad
 \widetilde{\bm{s}}_{\gamma} =  - {\REV{\Phi}}^T_{k,\gamma} Q_{\gamma,\Gamma}(\dot{\bm{g}}_1,\dot{\bm{g}}_2), 
 \end{align*} 
and that, for $i \in \{1,2\}$ and $\{p,q\} \in \{\gamma, 0\}$, 
\begin{equation}\label{eq:split-matrices}
\begin{array}{c}
\displaystyle
\widetilde{M}_{i,pq} := {\REV{\Phi}}_{i,p}^T M_{i,pq} {\REV{\Phi}}_{i,q}, \quad
\widetilde{F}_{i,pq} := {\REV{\Phi}}_{i,p}^T F_{i,pq} {\REV{\Phi}}_{i,q}, \quad
\widetilde{G}_{i,\gamma} := {\REV{\Phi}}_{k,\gamma}^T G_{i,\gamma} {\REV{\Phi}}_{i,\gamma}, \\[2ex]
\widetilde{\bm{f}}_{i,p} := {\REV{\Phi}}_{i,p}^T \bm{f}_{i,p}, \quad
Q_{i,p \Gamma}( \dot{\bm{g}}_{i}, \bm{g}_{i})=  \ M_{i,p \Gamma} \dot{\bm{g}}_{i} + F_{i,p \Gamma} \bm{g}_{i}. 
\end{array}
\end{equation}
For $p\in \{\gamma,0\}$,  the matrices $M_{i,p \Gamma} $ and $F_{i,p \Gamma}$ are the blocks of the ``partial'' mass and flux matrices corresponding to the interface  and interior variables respectively, i.e., 
$$
M_{i,\Gamma} = [M_{i,\gamma \Gamma};M_{i,0 \Gamma}] 
\quad\mbox{and}\quad       
F_{i,\Gamma} = [F_{i,\gamma \Gamma};F_{i,0 \Gamma}] \,.
$$


\subsection{An IVR scheme for the coupled ROM-ROM based on a composite reduced basis}\label{AdC:sec:split-basis-algorithm}
In this section, we formulate an IVR scheme for the partitioned solution of the coupled ROM-ROM problem \eqref{AdC:eq:SeparatedRR}. Analysis in Section \ref{sec:analysis} provides a rigorous mathematical basis for this scheme by showing that the Schur complement of \eqref{AdC:eq:SeparatedRR} is symmetric and positive definite.

Since this IVR scheme is based on a coupled ROM-ROM problem, similar to other ROM methods, it comprises an offline stage where one computes the reduced basis and projects the FOM and an online stage where one uses the resulting ROM to simulate the system of interest. 
Although the offline stage of the IVR ROM-ROM scheme is very similar to that of any POD-based model order reduction scheme, we include it for completeness of the presentation. 

\paragraph{Offline: Computation of the composite basis ROMs}
\begin{enumerate}
\item 
\textbf{Snapshot collection.} 
Solve the transmission problem \eqref{AdC:strongForm} using a suitable full order model to obtain the 
subdomain snapshot matrices ${X}_i$, for $i=1,2$.  Form the adjusted snapshot matrices $X_{i,D}$ and  extract their interior $X_{i,0}$ and interface $X_{i,\gamma}$ parts.
\item \textbf{Reduced basis calculation.} For $i=1,2$, choose accuracy thresholds $\delta_{i,0}, \delta_{i,\gamma} >0$, determine the reduced bases dimensions $d_{i,0}$ and $d_{i,\gamma}$ as in \eqref{eq:delta}, and calculate the reduced bases ${\REV{\Phi}}_{i,0}$, ${\REV{\Phi}}_{i,\gamma}$ following Section \ref{sec:rb-model}. Choose $k=1$ or $k=2$ and set ${\REV{\Phi}}_{\gamma}={\REV{\Phi}}_{k,\gamma}$.
%
\item \textbf{Galerkin projection.}  For $i=1,2$ and $\{p,q\} \in \{0,\gamma \}$, precompute the ROM matrices:
$$
\begin{array}{c}
\widetilde{M}_{i,pq} := {\REV{\Phi}}^T_{i,p} M_{i,pq} {\REV{\Phi}}_{i,q} \in 
\mathbb{R}^{d_{i,p}\times d_{i,q}}; \ \ 
\widetilde{F}_{i,pq} := {\REV{\Phi}}_{i,p}^T F_{i,pq} {\REV{\Phi}}_{i,q} \in 
\mathbb{R}^{d_{i,p}\times d_{i,q}}; \ \ 
\widetilde{G}_i := {\REV{\Phi}}_{k,\gamma}^T G_i {\REV{\Phi}}_{i,\gamma} \in \mathbb{R}^{d_{k,\gamma} \times d_{i,\gamma}}\\[2ex]
\widetilde{Q}_{i,p \Gamma} 
= \{{\REV{\Phi}}_{i,p}^T M_{i,p \Gamma},  {\REV{\Phi}}_{i,p}^T F_{i,p \Gamma} \},
\quad\mbox{and}\quad
\widetilde{Q}_{\gamma,\Gamma} 
= \{{\REV{\Phi}}^T_{k,\gamma} G_{1,\Gamma}, -{\REV{\Phi}}^T_{k,\gamma} G_{2,\Gamma}\}
\end{array}\,.
$$
\end{enumerate}

\paragraph{Online: Partitioned solution of the  coupled ROM-ROM system \eqref{AdC:eq:SeparatedRR}}
\begin{enumerate}
\vspace{1mm}
\item Given a simulation time interval $[0,T]$ 
choose explicit time integration schemes $D^n_{i,t}(\widetilde{\bm{u}})$ on $\Omega_i$, $i=1,2$.
\item For $i=1,2$, $p \in \{0,\gamma \}$ and $n=0,1,\ldots$, compute the right-hand side vectors $\widetilde{\bm{f}}^n_{i,p} := {\REV{\Phi}}_{i,p}^T \bm{f}^n_{i,p}$,
 \begin{align*}
 \begin{bmatrix}
\widetilde{\bm{s}}^n_{i,\gamma} \\[1ex] 
\widetilde{\bm{s}}^n_{i,0}
\end{bmatrix} 
= 
\begin{bmatrix}
\widetilde{\bm{f}}^n_{i,\gamma} \\[1ex] 
\widetilde{\bm{f}}^n_{i,0} 
\end{bmatrix}
-
 \begin{bmatrix}
 \widetilde{F}_{i,\gamma \gamma} & \widetilde{F}_{i,\gamma 0} \\[0.5ex]
 \widetilde{F}_{i, 0 \gamma} & \widetilde{F}_{i,0 0}
 \end{bmatrix}
\begin{bmatrix}
 \widetilde{\bm{u}}^n_{i,\gamma} \\[1ex] \widetilde{\bm{u}}^n_{i,0}
 \end{bmatrix}  
 -
 \begin{bmatrix}
  \widetilde{Q}_{i,\gamma \Gamma}(\dot{\bm{g}}^n_i,\bm{g}^n_i) \\[1ex]
  \widetilde{Q}_{i,0 \Gamma}(\dot{\bm{g}}^n_i,\bm{g}^n_i)  
 \end{bmatrix}\,,
 \quad\mbox{and}\quad
\widetilde{\bm{s}}^n_{\gamma} 
=  - \widetilde{Q}_{\gamma,\Gamma}(\dot{\bm{g}}^n_1,\dot{\bm{g}}^n_2),
 \end{align*} 
where $\dot{\bm{g}}^n_i$ is approximation of the time derivative of the boundary data at the current time step.
\item 
For $i=1,2$, let $\widetilde{M}_i$, $\widetilde{G}_i$, $\widetilde{\bm{s}}^n_i$ and $\widetilde{\bm{u}}^n_i$ denote the $2\times 2$, and $2\times 1$ block matrices and vectors in \eqref{AdC:eq:SeparatedRR}. Solve the Schur complement system for $\widetilde{\bm{\lambda}}^n$:
\begin{equation}\label{eq:split-schur}
\Big(\widetilde{G}_1 \widetilde{M}_1^{-1} \widetilde{G}_1^T + \widetilde{G}_2 \widetilde{M}_2^{-1} \widetilde{G}_2^T\Big) \widetilde{\bm{\lambda}}^n  =
\widetilde{G}_1 \widetilde{M}_1^{-1} \widetilde{\bm{s}}_{1}^n - 
\widetilde{G}_2 \widetilde{M}_2^{-1}  \widetilde{\bm{s}}_{2}^n -
\widetilde{\bm{s}}_{\gamma}^n \,.
\end{equation}
 \item For $i=1,2$, solve the subdomain ROM problems 
$$
 \begin{bmatrix} 
 \widetilde{M}_{i,\gamma \gamma} & \widetilde{M}_{i,\gamma 0} \\[1ex]
 \widetilde{M}_{i,0\gamma} & \widetilde{M}_{i,00} 
 \end{bmatrix} 
 D^n_{i,t}\left(\begin{bmatrix} \widetilde{\bm{u}}_{i,\gamma}^{n+1} \\[1ex]
 \widetilde{\bm{u}}_{i,0}^{n+1} 
 \end{bmatrix} \right) 
 = 
 \begin{bmatrix} \widetilde{\bm{s}}_{i,\gamma}^n + (-1)^i \widetilde{G}_i^T \widetilde{\bm{\lambda}}^n \\[1ex]
 \widetilde{\bm{s}}_{i,0}^n
 \end{bmatrix}
 $$
 for the ROM solution $\widetilde{\bm{u}}_i^{n+1}$ at the new time step.
 \item For $i=1,2$, project the ROM solutions to the state spaces of the FOMs on $\Omega_i$:
 $$
 \bm{u}^{n+1}_i = \left(
{\REV{\Phi}}_{i,\gamma} \widetilde{\bm{u}}^{n+1}_{i,\gamma},
{\REV{\Phi}}_{i,0} \widetilde{\bm{u}}^{n+1}_{i,0} ,\bm{g}_i^{n+1}\right)
 \,.
$$
\end{enumerate}

\section{An IVR scheme for coupled ROM-FOM problems}\label{sec:ROM-FOM}
\REV{This section extends the IVR scheme to the case of a ROM coupled} with a FOM. Such a formulation is relevant to multiple simulation scenarios for the model transmission problem \eqref{AdC:strongForm}. One such scenario is when one of the subdomains is much larger than the other and its FOM  dominates the computational cost. To balance the computational costs across the subdomains, one can replace the FOM on the large domain with a computationally efficient ROM, while retaining the FOM on the small subdomain.   
A second possible scenario is when the governing equations are parameterized 
on just one of the subdomains and simulation of the other subdomain requires a scheme that can handle all admissible inputs. In this case, a ROM would be only appropriate for the domain with the parameterized equations, while on the other subdomain one would still have to use  a FOM.
A related scenario is the case where a coupled ROM-FOM model has the potential of having better predictive accuracy than a model based solely on ROMs \cite{Lucia:2003, AdC:Iapichino_16_CMA, 
Kerfriden:2012, Ammar_2011, LeGresley:2005}. One example of this scenario would be a version of our model transmission problem \eqref{AdC:strongForm} in which the diffusion coefficient on one of the subdomains is allowed to be identically zero.  In this case, the governing equation on that subdomain reduces to a pure advection problem for which  POD-based MOR is not effective; see Section \ref{sec:POD}.

\subsection{A coupled ROM-FOM based on a composite reduced basis} \label{sec:split-basis-ROM-FOM}
As in the coupled ROM-ROM case, to develop an IVR scheme for the partitioned solution of coupled ROM-FOM problems, we first formulate a coupled ROM-FOM problem that is guaranteed to have a symmetric and positive definite Schur complement. To obtain this problem, assume that the FOM should be retained on $\Omega_2$ while simulation on $\Omega_1$ can be performed by a ROM. 
To define the corresponding coupled  ROM-FOM, we start from the coupled FOM-FOM \eqref{AdC:eq:Idx1System}, retain the FOM on $\Omega_2$, and project the state on $\Omega_1$  
${\REV{\Phi}}_{1,C}:=\{{\REV{\Phi}}_{1,\gamma},{\REV{\Phi}}_{1,0}\}$ using the composite RB ansatz
\begin{equation}\label{eq:ansatz-separate-ROM-FOM}
\bm{u}_1 = \left(
{\REV{\Phi}}_{1,\gamma} \widetilde{\bm{u}}_{1,\gamma},
{\REV{\Phi}}_{1,0} \widetilde{\bm{u}}_{1,0} ,\bm{g}_1 \right)\,.
\end{equation}
To complete the coupled ROM-FOM problem, one has to choose a trace-compatible representation for the Lagrange multiplier. In the present setting there are two possible options that satisfy this condition: the interface RB ${\REV{\Phi}}_{1,\gamma}$ from the ROM side of the interface (option ``rLM") or the interface finite element space $G^h_2$ from the FOM side of the interface (option ``fLM"). In the first case $\bm{\lambda} = {\REV{\Phi}}_{1,\gamma} \widetilde{\bm{\lambda}}$ and in the second case $\bm{\lambda}$ is the coefficient vector of a function $\lambda^h\in G^h_2$. Analysis in Section \ref{sec:analysis} will confirm that either one of these two options leads to coupled problems with non-singular Schur complements.

With these choices, we have the following \emph{composite RB} coupled ROM-FOM formulation:
\begin{align}\label{AdC:eq:SeparatedRR-ROM-FOM}
\begin{split}
&\begin{bmatrix}
\widetilde{M}_{1,\gamma \gamma} & \widetilde{M}_{1, \gamma 0} & 0  & 0 &  \widehat{G}_{1,\gamma}^T \\[1ex]
\widetilde{M}_{1,0 \gamma} & \widetilde{M}_{1,0 0} & 0 & 0 & 0\\[1ex]
0 & 0 & {M}_{2,\gamma \gamma} & {M}_{2,\gamma 0} & -\widehat{G}_{2,\gamma}^T \\[1ex]
0 & 0 & {M}_{2,0 \gamma} & {M}_{2,0 0} & 0 \\[1ex]
\widehat{G}_{1,\gamma} & 0 & -\widehat{G}_{2,\gamma} & 0 & 0
\end{bmatrix}
\begin{bmatrix}
\dot{\widetilde{\bm{u}}}_{1,\gamma} \\[1.25ex] 
\dot{\widetilde{\bm{u}}}_{1,0} \\[1.25ex]
\dot{\bm{u}}_{2,\gamma} \\[1.25ex]
\dot{\bm{u}}_{2,0} \\[1.25ex]
 \widehat{\bm{\lambda}}
\end{bmatrix}
=
\begin{bmatrix}
 \widetilde{\bm{s}}_{1,\gamma} \\[1.35ex]
 \widetilde{\bm{s}}_{1,0} \\[1.35ex]
  \bm{s}_{2,\gamma} \\[1.35ex]
  \bm{s}_{2,0} \\[1.35ex] 
 \widehat{\bm{s}}_{\gamma}
\end{bmatrix}
\end{split}
\,,
\end{align} 
where the blocks with the ``tilde" accent are defined as in the coupled ROM-ROM \eqref{AdC:eq:ansatz-separate}, the blocks without accents are defined as in the coupled FOM-FOM \eqref{AdC:eq:Idx1System}, and 
\begin{equation}\label{eq:hat}
\begin{array}{c}
\widehat{\bm{\lambda}}
=
\left\{\!\!
\begin{array}{rl}
\widetilde{\bm{\lambda}}, \!&\! \mbox{for option rLM} \\[1ex]
\bm{\lambda}, \!&\! \mbox{for option fLM}
\end{array}\,;
\right.
\widehat{\bm{s}}_{\gamma} 
=
\left\{\!\!
\begin{array}{rl}
 - {\REV{\Phi}}^T_{1,\gamma} Q_{\gamma,\Gamma}(\dot{\bm{g}}_1,\dot{\bm{g}}_2), 
\!&\! \mbox{for option rLM} \\[1ex]
 -Q_{\gamma,\Gamma}(\dot{\bm{g}}_1,\dot{\bm{g}}_2), 
\!&\! \mbox{for option fLM}
 \end{array}
 \right. ;
\\ \\
\widehat{G}_{1,\gamma}
=
\left\{\!\!
\begin{array}{rl}
{\REV{\Phi}}^T_{1,\gamma}G_{1,\gamma}{\REV{\Phi}}_{1,\gamma}, \!&\! \mbox{for option rLM} \\[1ex]
G_{1,\gamma}{\REV{\Phi}}_{1,\gamma}, \!&\! \mbox{for option fLM}
\end{array}\, 
\right. ;\;
\widehat{G}_{2,\gamma}
=
\left\{\!\!
\begin{array}{rl}
{\REV{\Phi}}^T_{1,\gamma}G_{2,\gamma}, \!&\! \mbox{for option rLM} \\[1ex]
G_{2,\gamma}, \!&\! \mbox{for option fLM}
\end{array}\, 
\right. .
\end{array}
\end{equation}
In the next section, 
we present the IVR scheme for the partitioned solution of  \eqref{AdC:eq:SeparatedRR-ROM-FOM}.

\subsection{An IVR scheme for the coupled ROM-FOM based on a composite reduced basis}\label{sec:split-basis-algorithm-ROM-FOM}
Since the coupled ROM-FOM problem has a ROM component, the IVR scheme for \eqref{AdC:eq:SeparatedRR-ROM-FOM} necessarily involves an offline stage where one computes the reduced basis and projects the FOM on $\Omega_1$ and an online stage where one uses the coupled problem to simulate the model transmission problem. Although these stages are very similar to those outlined in Section \ref{AdC:sec:split-basis-algorithm}, we include an abridged version for the convenience of the reader. To reduce notational clutter in some cases we switch to the more compact notation $M_{2,D}$, $\bm{s}_{2,D}$, and $\bm{f}_{2,D}$ for the matrix and vector blocks of the FOM problem.

\paragraph{Offline: Computation of the composite basis ROM for $\Omega_1$}

\begin{enumerate}
\item 
\textbf{Snapshot collection.} Solve the transmission problem \eqref{AdC:strongForm} using a suitable full order model to obtain the snapshot matrix $X_1$, form $X_{1,D}$ and extract $X_{1,\gamma}$ and $X_{1,0}$.

\item 
\textbf{Reduced basis calculation.}
Choose accuracy thresholds $\delta_{1,0}, \delta_{1,\gamma} >0$, determine the reduced bases dimensions $d_{1,0}$ and $d_{1\gamma}$, and calculate the reduced bases ${\REV{\Phi}}_{1,0}$, ${\REV{\Phi}}_{1,\gamma}$. Select an option (rLM or fLM) for the Lagrange multiplier.
\item 
\textbf{Galerkin projection.}
For $\{p,q\} \in \{0,\gamma \}$, precompute the ROM matrices 
$\widetilde{M}_{1,pq}$, $\widetilde{F}_{1,pq}$, $\widetilde{Q}_{1,p \Gamma}$.
Precompute $\widehat{G}_{i,\gamma}$ as defined in \eqref{eq:hat} and 
$$
\widehat{Q}_{\gamma,\Gamma} 
:=
\left\{
\begin{array}{rl}
 \{{\REV{\Phi}}^T_{1,\gamma} G_{1,\Gamma}, -{\REV{\Phi}}^T_{1,\gamma} G_{2,\Gamma}\}, 
 & \mbox{for option rLM} \\[1ex]
 \{G_{1,\Gamma}, -G_{2,\Gamma}\}, 
 & \mbox{for option fLM}
 \end{array}
 \right..
$$ 
\end{enumerate}

\paragraph{Online: Partitioned solution of the composite basis coupled ROM-FOM system}
\begin{enumerate}
\vspace{1mm}
\item Given a simulation time interval $[0,T]$ 
choose explicit time integration schemes $D^n_{i,t}(\widetilde{\bm{u}})$ on $\Omega_i$, $i=1,2$.

\item For $p \in \{0,\gamma \}$ and and $n=0,1,\ldots$, 
compute the ROM vectors $\widetilde{\bm{f}}^n_{1,p} := {\REV{\Phi}}_{1,p}^T \bm{f}^n_{1,p}$, $\widetilde{\bm{s}}^n_{1,\gamma}$, $\widetilde{\bm{s}}^n_{1,0}$ as in Section \ref{AdC:sec:split-basis-algorithm}. Compute the FOM vectors $\bm{f}^n_{2,D}$, ${\bm{s}}^n_{2,D}$, and the right-hand side for the constraint equation:
$$
\widehat{\bm{s}}^n_{\gamma}  =  - \widehat{Q}_{\gamma,\Gamma}(\dot{\bm{g}}^n_1,\dot{\bm{g}}^n_2).
$$
\item Solve the Schur complement system for $\widehat{\bm{\lambda}}^n$:
\begin{equation}\label{eq:split-schur-ROM-FOM}
\Big(\widehat{G}_1 \widetilde{M}_1^{-1} \widehat{G}_1^T + \widehat{G}_2 {M}_{2,D}^{-1} \widehat{G}_2^T\Big) \widehat{\bm{\lambda}}^n  =
\widehat{G}_1 \widetilde{M}_1^{-1} \widetilde{\bm{s}}_{1}^n - 
\widehat{G}_2 {M}_{2,D}^{-1}  {\bm{s}}_{2}^n -
\widehat{\bm{s}}_{\gamma}^n \,.
\end{equation}
\item Solve the subdomain ROM problem
 $$
 \begin{bmatrix} 
 \widetilde{M}_{1,\gamma \gamma} & \widetilde{M}_{1,\gamma 0} \\ 
 \widetilde{M}_{1,0\gamma} & \widetilde{M}_{1,00} 
 \end{bmatrix} 
 D^n_{1,t}\left(\begin{bmatrix} \widetilde{\bm{u}}_{1,\gamma}^{n+1} \\ 
 \widetilde{\bm{u}}_{1,0}^{n+1} 
 \end{bmatrix} \right) 
 = 
 \begin{bmatrix} \widetilde{\bm{s}}_{1,\gamma}^n - \widehat{G}_1^T  \widehat{\bm{\lambda}}^n \\ 
 \widetilde{\bm{s}}_{i,0}^n
 \end{bmatrix}, 
 $$
 to obtain the ROM solution $\widetilde{\bm{u}}_1^{n+1}$ at the new time step. 
 Solve the subdomain FOM problem
$$
 \begin{bmatrix} 
{M}_{2,\gamma \gamma} &{M}_{2,\gamma 0} \\ 
{M}_{2,0\gamma} & {M}_{2,00} 
 \end{bmatrix} 
 D^n_{2,t}\left(\begin{bmatrix} {\bm{u}}_{2,\gamma}^{n+1} \\ 
{\bm{u}}_{2,0}^{n+1}
 \end{bmatrix} \right) 
= 
\begin{bmatrix} {\bm{s}}_{2,\gamma}^n + \widehat{G}_2^T \widehat{\bm{\lambda}}^n \\[1ex]
{\bm{s}}_{2,0}^n
 \end{bmatrix},
 $$
 to obtain the FOM solution ${\bm{u}}_2^{n+1}$ at the new time step. 
 \item Project the ROM solution to the state space of the FOM on $\Omega_1$:
 $$
 \bm{u}^{n+1}_1 = \left(
{\REV{\Phi}}_{1,\gamma} \widetilde{\bm{u}}^{n+1}_{1,\gamma},
{\REV{\Phi}}_{1,0} \widetilde{\bm{u}}^{n+1}_{1,0} ,\bm{g}_1^{n+1} \right) \,.
$$
\end{enumerate}

In the next section, we show that both the coupled ROM-ROM and ROM-FOM have provably non-singular Schur complements, thereby providing appropriate settings for an application of the IVR scheme.


\section{Analysis}\label{sec:analysis}
We will first consider the coupled ROM-ROM formulation \eqref{AdC:eq:SeparatedRR} and use variational techniques to prove that it has a symmetric and positive definite Schur complement. We will then specialize this analysis to the case of the coupled ROM-FOM problem.

\subsection{Composite reduced basis coupled ROM-ROM}\label{sec:analysis-ROM-ROM}
Successful completion of the online stage of the IVR scheme for \eqref{AdC:eq:SeparatedRR} hinges on the unique solvability of the Schur complement system \eqref{eq:split-schur} in Step 3. We will show that this system  is uniquely solvable by proving that the Schur complement matrix 
\begin{equation}\label{eq:Schur-split}
\widetilde{S} :=
 \widetilde{G}_1 \widetilde{M}_1^{-1} \widetilde{G}_1^T + \widetilde{G}_2 \widetilde{M}_2^{-1} \widetilde{G}_2^T
 \end{equation}
 is symmetric and positive definite (SPD). It is well-known that this property requires the projected mass matrices $\widetilde{M}_i$, for $i=1,2$, to be symmetric and positive definite and the matrix $\widetilde{G}^T = (\widetilde{G}_1,-\widetilde{G}_2)^T$ to have full column rank. 
 
 While it is possible to develop purely algebraic proofs of the required properties, here we adopt a variational approach that exploits connections between the matrix defining the left hand side of the coupled ROM-ROM problem and mixed variational forms. In so doing, we illuminate how properties of the variational formulation underlying \eqref{AdC:eq:SeparatedRR} translate into properties of its algebraic equivalent. This approach can also expose potential dependencies of these properties on the dimension of the reduced basis and/or the mesh parameter.  
The latter is not easily achievable through strictly algebraic means.
 
Let $V^h_{D} = S^h_{1,D}\times S^h_{2,D}$ and $W^h=G^h_k$. We introduce the \emph{auxiliary} mixed 
variational form $B:(V^h_D\times W^h)\times (V^h_D\times W^h) \mapsto \mathbb{R}$ as
 \begin{equation}\label{eq:mixed-aux}
B({u}^h_{1,D},{u}^h_{2,D},\mu^h; {v}^h_{1,D},{v}^h_{2,D};\lambda^h):=
a({u}^h_{1,D},{u}^h_{2,D};{v}^h_{1,D},{v}^h_{2,D}) +
b({v}^h_{1,D},{v}^h_{2,D};\lambda^h) + b({u}^h_{1,D},{u}^h_{2,D};\mu^h) 
\end{equation}
where $a: V^h_D\times V^h_D\mapsto\mathbb{R}$ and $b:V^h_D\times W^h\mapsto \mathbb{R}$ are defined as 
$$
a({u}^h_{1,D},{u}^h_{2,D};{v}^h_{1,D},{v}^h_{2,D}) = 
\left(u^h_{1,D},v^h_{1,D}\right)_{0,\Omega_1} + \left(u^h_{2,D},v^h_{2,D}\right)_{0,\Omega_2} 
\ \ \mbox{and}\ \ 
b({v}^h_{1,D},{v}^h_{2,D};\lambda^h) = \left({v}^h_{1,D} - {v}^h_{2,D}, \lambda^h \right)_{0,\gamma},
$$
respectively. We call the bilinear form \eqref{eq:mixed-aux} ``auxiliary" because it is not the form that corresponds to the weak coupled problem \eqref{eq:weaF_form_LM};
 rather, it is the form that generates the matrix operators on the left-hand side of the coupled FOM-FOM problem \eqref{AdC:eq:Idx1System}. 
 
To prove that the Schur complement \eqref{eq:Schur-split} is SPD, we will apply Brezzi's mixed variational theory \cite{Brezzi_74_RAIRO} to \eqref{eq:mixed-aux} to show that the projected mass matrices are SPD and that the transpose constraint  matrix has full column rank. Application of this mixed theory requires a proper functional setting for \eqref{eq:mixed-aux}, specifically the endowment of $V^h_D$ and $W^h$ with suitable norms. We define these norms as 
\begin{equation}\label{eq:norms}
\|\{{v}^h_{1,D},{v}^h_{2,D}\}\|^2_{V} :=
\|{v}^h_{1,D}\|^2_{0,\Omega_1} + \|{v}^h_{2,D}\|^2_{0,\Omega_2}
\quad\mbox{and}\quad
\|\lambda^h\|_{W} := \|\lambda_h\|_{0,\gamma},
\end{equation}
respectively.

 
\begin{remark}\label{rem:DD}
Although the IVR scheme bears resemblance with DD methods based on Lagrange multipliers such as FETI \cite{Farhat_91_IJNME,Farhat_etal_1995_IJNME} and mortar methods \cite{Bernardi_94_INPROC}, the variational setting for its analysis provided by \eqref{eq:mixed-aux} and \eqref{eq:norms} is different from that required for the analysis of these  DD schemes. This difference stems from the fact that analysis of IVR relies on the auxiliary form \eqref{eq:mixed-aux}, which does not include any contributions from the flux terms, whereas analysis of DD methods involves the ``true'' mixed form corresponding to \eqref{eq:weaF_form_LM}, which includes such terms. A proper functional setting for the latter requires a broken $H^1$ norm on $V^h_D$, instead of the broken $L^2$ norm \eqref{eq:norms} used here, and a discrete approximation of the trace norm on $H^{-1/2}(\gamma)$. The use of the auxiliary form \eqref{eq:mixed-aux} relaxes the requirements on the functional setting for the application of the mixed theory and allows us to use a weaker norm on $V^h_D$ and a very ``crude'' approximation of the trace norm by an $L^2$ norm on $\gamma$. We refer to \cite{Toselli_Widlund_2005_Book} and  \cite{Braess_01_INPROC} for further details about the analysis of DD methods.
 \end{remark}

 To apply the mixed theory to \eqref{AdC:eq:SeparatedRR},
 we need to further adjust the variational setting so that the auxiliary form \eqref{eq:mixed-aux} produces the left-hand side of the coupled ROM-ROM problem. For $i=1,2$, let $\widetilde{S}^h_{i,\gamma}$, $\widetilde{S}^h_{i,0}$, and $\widetilde{G}^h_{k}$ be the reduced subspaces of ${S}^h_{i,\gamma}$, ${S}^h_{i,0}$, and ${G}^h_{k}$, induced by the columns of the composite RBs $\REV{\Phi}_{i,\gamma}$, $\REV{\Phi}_{i,0}$, and $\REV{\Phi}_{k,\gamma}$, respectively; see Section \ref{sec:POD}. We define the reduced subspaces $\widetilde{V}^h_C\subset V^h_D$ and $\widetilde{W}^h\subset W^h$ as
 \begin{equation}\label{eq:RS}
 \widetilde{V}^h_{C} = \widetilde{S}^h_{1,C}\times \widetilde{S}^h_{2,C}
 \quad\mbox{and}\quad
 \widetilde{W}^h=\widetilde{G}^h_{k}\,,
 \end{equation}
 respectively, where\footnote{Note that we have deviated from our usual naming convention and have labeled the subspace of $S^h_{i,D}$ engendered by the composite basis as $\widetilde{S}^h_{i,C}$. This is done in order to avoid confusion with the reduced subspace $\widetilde{S}^h_{i,D}$, which corresponds to the full subdomain RB matrix $\REV{\Phi}_{i,D}$.} $\widetilde{S}^h_{i,C}:= \widetilde{S}^h_{i,\gamma}\oplus\widetilde{S}^h_{i,0}$.
 
 It is straightforward to check that restriction of the auxiliary form \eqref{eq:mixed-aux} to the reduced subspaces \eqref{eq:RS} generates the matrix on the left-hand side of the coupled ROM-ROM problem \eqref{AdC:eq:SeparatedRR}.  For example, consider a finite element function
 ${u}^h_{i,C}\in \widetilde{S}^h_{i,C}$ with coefficient vector 
 $\bm{u}_{i,C} = (\bm{u}_{i,\gamma},\bm{u}_{i,0})$, where
 \begin{equation}\label{eq:example-a}
 \bm{u}_{i,\gamma} = \REV{\Phi}_{i,\gamma}\widetilde{\bm{u}}_{i,\gamma}
 \quad\mbox{and}\quad
  \bm{u}_{i,0} = \REV{\Phi}_{i,0}\widetilde{\bm{u}}_{i,0},
 \end{equation}
 for some modal amplitudes $\widetilde{\bm{u}}_{i,\gamma}$ and $\widetilde{\bm{u}}_{i,0}$. 
 Let now $\{{u}^h_{1,C},{u}^h_{2,C}\},\{{v}^h_{1,C},{v}^h_{2,C}\}\in \widetilde{V}^h_{C}$. Using  \eqref{eq:example-a} it easily follows that 
 \begin{equation}\label{eq;mass-SPD}
 a({u}^h_{1,C},{u}^h_{2,C};{v}^h_{1,C},{v}^h_{2,C}) 
 =
 \sum_{i=1}^2
\begin{bmatrix} 
\widetilde{\bm{u}}^T_{i,\gamma} & \widetilde{\bm{u}}^T_{i,0}
 \end{bmatrix} 
 \begin{bmatrix} 
\widetilde{M}_{i,\gamma\gamma} & \widetilde{M}_{i,\gamma 0} \\ 
\widetilde{M}_{i,0\gamma} & \widetilde{M}_{i,00} 
 \end{bmatrix} 
 \begin{bmatrix}
 \widetilde{\bm{u}}_{i,\gamma} \\ \widetilde{\bm{u}}_{i,0}\,
 \end{bmatrix}.
 \end{equation}

Application of the Brezzi theory requires verification of two separate  conditions on $a(\cdot,\cdot)$ and $b(\cdot,\cdot)$. Specialized to the functional setting for \eqref{AdC:eq:SeparatedRR} constructed earlier, these conditions are as follows.

\paragraph{Coercivity on the kernel} The form  $a(\cdot,\cdot)$ is coercive on the nullspace $\widetilde{Z}^h_{C}\subset \widetilde{V}^h_{C}$ of $b(\cdot,\cdot)$ defined as
$$
\widetilde{Z}^h_{C}
=
\left\{\{{v}^h_{1,C},{v}^h_{2,C}\}\in \widetilde{V}^h_{C} \,\big |
b({v}^h_{1,C},{v}^h_{2,C};\mu^h) = 0\quad\forall\mu^h\in\widetilde{W}^h
\right\} \,.
$$
 
 \paragraph{Inf-sup condition} For any $\mu^h\in\widetilde{W}^h$ the form $b(\cdot,\cdot)$ satisfies the inequality
\begin{equation}\label{eq:infsup}
\sup_{\{{v}^h_{1,C},{v}^h_{2,C}\}\in \widetilde{V}^h_{C}\times \widetilde{V}^h_{C}} 
\frac{b({v}^h_{1,C},{v}^h_{2,C};{\mu}^h)}{\|\{{v}^h_{1,C},{v}^h_{2,C}\}\|_V}
\ge \beta \| \mu^h\|_W,
\end{equation}
with a mesh-independent constant $\beta$. 
\medskip

The proof of the first Brezzi condition is trivial as it is easy to see that $a(\cdot,\cdot)$ is coercive on all of $V^h_{D}\times V^h_{D}$. Since strong coercivity is inherited on subspaces, it follows that $a(\cdot,\cdot)$ is coercive on $\widetilde{V}^h_{C}\times \widetilde{V}^h_{C}$ and its subspace $\widetilde{Z}^h_{C}\times \widetilde{Z}^h_{C}$. 
Using \eqref{eq;mass-SPD}, it easily follows that the algebraic translation of this property amounts to the statement that the projected mass matrices are SPD, which verifies the first condition necessary to establish that the Schur complement \eqref{eq:Schur-split} is SPD.

It is well-known that the second condition, i.e., the requirement that $\widetilde{G}^T$ has full column rank, is  a consequence of $b(\cdot,\cdot)$ satisfying the \textit{inf-sup} condition; see, e.g., \cite[p.38, Proposition 3.1]{Brezzi_90_CMAME} for a discussion of the relationships between algebraic and variational properties of discrete mixed problems. To prove the \textit{inf-sup} condition, we will need the following auxiliary result.

\begin{lemma}\label{lem:Q}
Let $h_{k}$ denote the characteristic element size of the interface mesh defining the Lagrange multiplier space $G^h_k$.  
There exists an operator $\mathcal{Q}: \widetilde{W}^h\mapsto \widetilde{V}^h_C$ such that for every $\mu^h\in\widetilde{W}^h$ there holds
\begin{equation}\label{eq:Q}
(a)\ \ \|\mu^h\|^2_{W} \le C_1 b(\mathcal{Q}(\mu^h);\mu^h)
\quad
\mbox{and}
\quad
(b) \ \ \|\mathcal{Q}(\mu^h)\|_{V} \le C_2 h^\alpha_{k} \|\mu^h\|_W \,,
\end{equation}
where $\alpha \ge 0$ and $C_1$, $C_2$ are positive constants independent of this element size. 
\end{lemma}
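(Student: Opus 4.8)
The plan is to construct $\mathcal{Q}$ explicitly, exploiting the \emph{trace compatibility} that is built into the formulation: the reduced multiplier space $\widetilde{W}^h=\widetilde{G}^h_k$ and the interface reduced subspace $\widetilde{S}^h_{k,\gamma}$ are spanned by one and the same matrix $\Phi_{k,\gamma}$, while $G^h_k=S^h_{k,\gamma}\big|_\gamma$ by definition. Without loss of generality assume $k=1$. Given $\mu^h\in\widetilde{W}^h$ with reduced coordinates $\widetilde{\bm{\mu}}\in\mathbb{R}^{d_{1,\gamma}}$, its coefficient vector is $\bm{c}=\Phi_{1,\gamma}\widetilde{\bm{\mu}}\in\mathbb{R}^{n_{1,\gamma}}$; let $w^h_1\in\widetilde{S}^h_{1,\gamma}$ be the finite element function carrying this same coefficient vector $\bm{c}$, which is legitimate precisely because $\bm{c}$ is a linear combination of the columns of $\Phi_{1,\gamma}$. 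I would then set $\mathcal{Q}(\mu^h):=\{w^h_1,0\}$ (for $k=2$ one instead sets $\mathcal{Q}(\mu^h):=\{0,-w^h_2\}$ to absorb the sign appearing in $b(\cdot,\cdot)$). Since $w^h_1\in\widetilde{S}^h_{1,\gamma}\subset\widetilde{S}^h_{1,C}$ and the second component is $0\in\widetilde{S}^h_{2,C}$, we have $\mathcal{Q}(\mu^h)\in\widetilde{V}^h_C$, so $\mathcal{Q}$ does map $\widetilde{W}^h$ into $\widetilde{V}^h_C$; moreover $\mathcal{Q}$ is linear.

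To prove \eqref{eq:Q}(a), the key point is that, by the definition of the induced interface space $G^h_1=S^h_{1,\gamma}|_\gamma$ and the coefficient isomorphism recalled at the end of Section \ref{AdC:sec:notation}, the trace of $w^h_1$ on $\gamma$ equals $\lambda^{h,\bm{c}}_1=\mu^h$ (this uses the lowest-order elements, so that traces of nodal basis functions at non-interface nodes vanish on $\gamma$). Consequently $b(\mathcal{Q}(\mu^h);\mu^h)=(w^h_1|_\gamma-0,\mu^h)_{0,\gamma}=(\mu^h,\mu^h)_{0,\gamma}=\|\mu^h\|_W^2$, so \eqref{eq:Q}(a) holds with $C_1=1$.

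To prove \eqref{eq:Q}(b), observe first that, since the $\Omega_2$-component of $\mathcal{Q}(\mu^h)$ vanishes, $\|\mathcal{Q}(\mu^h)\|_V=\|w^h_1\|_{0,\Omega_1}$. The estimate then reduces to two scaling arguments, both of which use only the assumed quasi-uniformity of $\Omega^h_1$ (and hence of the induced interface mesh $\gamma^h_1$, whose characteristic element size is comparable to $h_1$). On the one hand, $\|w^h_1\|_{0,\Omega_1}^2=\bm{c}^T M_{1,\gamma\gamma}\bm{c}$, and since $M_{1,\gamma\gamma}$ is a principal submatrix of the mass matrix of $S^h_1$, Cauchy interlacing together with the standard bound $\lambda_{\max}\le C h_1^\nu$ for lowest-order Lagrange mass matrices on quasi-uniform meshes gives $\|w^h_1\|_{0,\Omega_1}^2\le C h_1^\nu\|\bm{c}\|_{\ell^2}^2$. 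On the other hand, $\|\mu^h\|_W^2=\|\mu^h\|_{0,\gamma}^2=\bm{c}^T M_{\gamma}\bm{c}$, where $M_{\gamma}$ is the mass matrix of $G^h_1$ on the $(\nu-1)$-dimensional mesh $\gamma^h_1$, and quasi-uniformity gives the lower spectral bound $\|\mu^h\|_W^2\ge c\,h_1^{\nu-1}\|\bm{c}\|_{\ell^2}^2$. Eliminating $\|\bm{c}\|_{\ell^2}$ between the two inequalities yields $\|\mathcal{Q}(\mu^h)\|_V\le C_2\,h_1^{1/2}\|\mu^h\|_W$, i.e. \eqref{eq:Q}(b) holds with $\alpha=1/2$.

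I expect the main obstacle to be making fully rigorous the claim that $C_1$, $C_2$ and $\alpha$ are independent of \emph{both} the mesh size $h_k$ and the reduced-basis dimension $d_{k,\gamma}$. Independence of $h_k$ rests entirely on the two-sided, quasi-uniformity-based spectral bounds $c h^\nu I\le M\le C h^\nu I$ for the subdomain and interface mass matrices; independence of $d_{k,\gamma}$ is then automatic, because these bounds are established on the full finite element spaces and therefore carry over verbatim to any subspace. This is precisely the mechanism by which, when Lemma \ref{lem:Q} is inserted into the inf-sup inequality \eqref{eq:infsup} via the test function $v=\mathcal{Q}(\mu^h)$, the resulting constant $\beta=(C_1 C_2)^{-1}h_k^{-\alpha}$ remains bounded below uniformly in $h_k$ (since $\Omega$ is bounded, $h_k^{-\alpha}\ge(\mathrm{diam}\,\Omega)^{-\alpha}$) and in the reduced-basis size. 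The remaining bookkeeping — handling the $k=1$ versus $k=2$ sign in $b(\cdot,\cdot)$, and the equivalence between the interface mesh size and $h_k$ — is routine.
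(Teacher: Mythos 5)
Your proof is correct and follows essentially the same route as the paper: you construct the same lifting of $\mu^h$ into the interface part of the composite reduced basis space (zero interior coefficients), obtain (a) with $C_1=1$ identically, and prove (b) with $\alpha=1/2$ via the same two-sided equivalence between the $L^2$ norm of a nodal finite element function and the Euclidean norm of its coefficient vector, applied once on the $\nu$-dimensional subdomain and once on the $(\nu-1)$-dimensional interface. The only cosmetic difference is that you phrase this equivalence through mass-matrix spectral bounds and Cauchy interlacing, whereas the paper invokes it directly as a cited lemma.
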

\begin{proof}
Let $k=1$ or $k=2$ be the index used to define the reduced basis for the Lagrange multiplier space.
According to \eqref{eq:example-a}, the coefficient vector of a function 
 ${u}^h_{k,C}\in \widetilde{S}^h_{k,C}$  is given by 
 $\bm{u}_{k,C} = (\REV{\Phi}_{k,\gamma}\widetilde{\bm{u}}_{k,\gamma}, \REV{\Phi}_{k,0}\widetilde{\bm{u}}_{k,0})$, where $\widetilde{\bm{u}}_{k,\gamma}\in\mathbb{R}^{d_{k,\gamma}}$ and  $\widetilde{\bm{u}}_{k,0}\in\mathbb{R}^{d_{k,0}}$ are the associated interface and interior modal amplitudes. 
Let $\mu^h\in\widetilde{W}^h$ be an arbitrary function in the reduced Lagrange multiplier space. 
The coefficient vector of this function is given by the ansatz in \eqref{AdC:eq:ansatz-separate}, i.e., $\bm{\mu} = \REV{\Phi}_{k,\gamma} \widetilde{\bm{\mu}}$, with a modal amplitude $\widetilde{\bm{\mu}} \in\mathbb{R}^{d_{k,\gamma}}$. It follows that the coefficient vector 
\begin{equation}\label{eq:lift}
\bm{u}^{\mu}_{k,C} = (\bm{\mu},\bm{0}) = (\REV{\Phi}_{k,\gamma} \widetilde{\bm{\mu}},\bm{0});
\quad
\bm{0}\in\mathbb{R}^{n_{k,0}}
\end{equation}
defines a lifting ${u}^{h,\mu}_{k,C}\in \widetilde{S}^h_{k,C}$ of $\mu^h$ such that 
$$
{u}^{h,\mu}_{k,C}\big |_{\gamma} = \mu^h\,.
$$
We define the operator $\mathcal{Q}=\{\mathcal{Q}_1,\mathcal{Q}_2\}$ using this lifting as 
\begin{equation}\label{eq:op-Q}
\mathcal{Q}(\mu^h) = 
\left\{
\begin{array}{rl}
\{{u}^{h,\mu}_{1,C},0 \} \in \widetilde{V}^h_C & \mbox{if $k=1$}\\[1.5ex]
-\{0, {u}^{h,\mu}_{2,C} \} \in \widetilde{V}^h_C & \mbox{if $k=2$}
\end{array}
\right. \,.
\end{equation}
With this definition the first assertion in \eqref{eq:Q} holds trivially with $C_1=1$:
$$
 b(\mathcal{Q}(\mu^h);\mu^h) 
 =
 \left(\mathcal{Q}_1(\mu^h)-\mathcal{Q}_2(\mu^h),\mu^h\right)_{0,\gamma}
 =
 \left(\mu^h,\mu^h\right)_{0,\gamma}
 =
 \|\mu^h\|^2_W\,.
$$
To prove the second assertion in \eqref{eq:Q}, we start by noting that
\begin{equation}\label{eq:Qnorm}
\|\mathcal{Q}(\mu^h)\|^2_V 
=
\|\mathcal{Q}_1(\mu^h)\|^2_{0,\Omega_1} + \|\mathcal{Q}_2(\mu^h)\|^2_{0,\Omega_2}
=
\| {u}^{h,\mu}_{k,C}\|^2_{0,\Omega_k}\,.
\end{equation}
Next, we recall the equivalence relation  \cite[p.386, Lemma 9.7]{Ern_04_BOOK}
\begin{equation}\label{eq:equiv}
\underline{C}_{\nu} h^{\nu} |\bm{v}|^2 \le \|{v}^h\|^2_{0,\omega} \le \overline{C}_{\nu} h^{\nu} |\bm{v} |^2
\end{equation}
that holds for any nodal finite element function ${v}^h$ defined on a quasi-uniform finite element partition of a bounded region $\omega\subset \mathbb{R}^{\nu}$ and its coefficient vector  $\bm{v} \in \mathbb{R}^n$.  
Application of the upper bound in \eqref{eq:equiv} to the lifting ${u}^{h,\mu}_{k,C}$, together with the 
definition \eqref{eq:lift} of its coefficient vector, yields
 $$
 \| {u}^{h,\mu}_{k,C}\|^2_{0,\Omega_k}
\le
\overline{C}_{\nu} h^{\nu}_{k} |\bm{u}^{\mu}_{k,C}|^2 =  \overline{C}_{\nu} h^{\nu}_{k} |\bm{\mu} |^2 \,.
$$
Since $\bm{\mu}$ is also the coefficient of the Lagrange multiplier $\mu^h$,
application of the lower bound in \eqref{eq:equiv} with $\nu-1$ gives the inequality
 $$
 |\bm{\mu} |^2 \le \frac{1}{\underline{C}_{\nu-1} h^{(\nu-1)}_{k}} \|\mu^h\|^2_W\,.
 $$
 In conjunction with \eqref{eq:Qnorm}, these two inequalities combine to produce the bound 
 $$
  \|\mathcal{Q}(\mu^h)\|^2_V
  \le 
  \frac{\overline{C}_{\nu}}{\underline{C}_{\nu-1}}h_{k} \|\mu^h\|^2_W.
 $$
 Therefore, the second assertion of the lemma holds with $C_2 = \overline{C}_{\nu}/\underline{C}_{\nu-1}$ and $\alpha =1/2$.
 \end{proof}
 
 Having established the existence of the operator $\mathcal{Q}$, the proof of the \textit{inf-sup}
 condition is straightforward. The following lemma provides the formal argument.
 
 \begin{lemma}\label{lem:infsup}
Assume that $h_{k} < 1$. Then,  the bilinear form $b(\cdot,\cdot)$ satisfies the \textit{inf-sup}
 condition \eqref{eq:infsup}.
 \end{lemma}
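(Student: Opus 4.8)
The plan is to use the operator $\mathcal{Q}$ from Lemma~\ref{lem:Q} directly as a Fortin-type operator. For a given $\mu^h\in\widetilde{W}^h$, the element $\mathcal{Q}(\mu^h)\in\widetilde{V}^h_C$ is an admissible competitor in the supremum defining the left-hand side of \eqref{eq:infsup}, so that supremum is bounded below by the single quotient $b(\mathcal{Q}(\mu^h);\mu^h)/\|\mathcal{Q}(\mu^h)\|_V$. First I would dispense with the trivial case $\mu^h=0$, for which both sides of \eqref{eq:infsup} vanish. For $\mu^h\neq 0$, I would note that $\mathcal{Q}(\mu^h)\neq 0$: were it zero, part~(a) of \eqref{eq:Q} would force $\|\mu^h\|_W=0$, a contradiction; hence the quotient is well defined and may be used as a lower bound for the supremum.

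Next I would chain the two estimates of Lemma~\ref{lem:Q}: part~(a) in the numerator and part~(b) in the denominator give
\begin{equation*}
\frac{b(\mathcal{Q}(\mu^h);\mu^h)}{\|\mathcal{Q}(\mu^h)\|_V}
\ge
\frac{C_1^{-1}\|\mu^h\|_W^2}{C_2\, h_k^{\alpha}\,\|\mu^h\|_W}
=
\frac{1}{C_1 C_2\, h_k^{\alpha}}\,\|\mu^h\|_W .
\end{equation*}
Because Lemma~\ref{lem:Q} supplies $\alpha=1/2>0$, the hypothesis $h_k<1$ yields $h_k^{\alpha}<1$, hence $1/h_k^{\alpha}>1$ and the right-hand side is bounded below by $(C_1C_2)^{-1}\|\mu^h\|_W$. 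Combining this with the fact that the supremum in \eqref{eq:infsup} dominates the quotient evaluated at $\{v^h_{1,C},v^h_{2,C}\}=\mathcal{Q}(\mu^h)$, I obtain \eqref{eq:infsup} with the mesh-independent constant $\beta=1/(C_1C_2)$, where $C_1,C_2$ are exactly the constants produced in the proof of Lemma~\ref{lem:Q} (in fact $C_1=1$ and $C_2=\overline{C}_\nu/\underline{C}_{\nu-1}$).

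There is essentially no obstacle beyond bookkeeping: all the analytic content — constructing the lifting \eqref{eq:lift} of $\mu^h$ and controlling its $L^2$ norm via the finite-element norm equivalence \eqref{eq:equiv} — was already carried out in Lemma~\ref{lem:Q}. The one point that deserves a sentence of commentary is the role of the assumption $h_k<1$: it is precisely what converts the $h_k$-dependent bound $1/(C_1C_2 h_k^{\alpha})$ into a uniform one, as required by the phrase ``mesh-independent constant $\beta$'' in \eqref{eq:infsup}. I would also remark that, since $\alpha>0$, the inf-sup constant in fact \emph{improves} under refinement of the interface mesh, so no lower bound on $h_k$ is needed and the restriction $h_k<1$ is merely a convenient normalization.
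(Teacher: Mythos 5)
Your proposal is correct and follows essentially the same argument as the paper: take $\mathcal{Q}(\mu^h)$ from Lemma~\ref{lem:Q} as the competitor in the supremum, apply the estimates \eqref{eq:Q}(a) and \eqref{eq:Q}(b) in the numerator and denominator respectively, and use $h_k<1$ with $\alpha=1/2$ to absorb the factor $h_k^{-\alpha}\ge 1$ into the mesh-independent constant $\beta=\underline{C}_{\nu-1}/\overline{C}_{\nu}$. Your added remarks on the trivial case $\mu^h=0$ and on $\mathcal{Q}(\mu^h)\neq 0$ are minor tidiness beyond what the paper records, but the substance is identical.
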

 \begin{proof}
 Let $\mu^h\in \widetilde{W}^h$ be an arbitrary reduced space function. Using the assumption on the mesh size and the properties \eqref{eq:Q} of the operator $\mathcal{Q}$, we find that 
 $$
\sup_{\{{v}^h_{1,C},{v}^h_{2,C}\}\in \widetilde{V}^h_{C}\times \widetilde{V}^h_{C}} 
\frac{b({v}^h_{1,C},{v}^h_{2,C};{\mu}^h)}{\|\{{v}^h_{1,C},{v}^h_{2,C}\}\|_V}
\ge
\frac{b(\mathcal{Q}(\mu^h),\mu^h)}{\|\mathcal{Q}(\mu^h)\|_V}
\overset{(\ref{eq:Q}a)}{=} \frac{\|\mu^h\|^2_W}{\|\mathcal{Q}(\mu^h)\|_V}
\overset{(\ref{eq:Q}b)}\ge
 \frac{\underline{C}_{\nu-1}}{\overline{C}_{\nu}}h^{-1/2}_{k} \|\mu^h\|_W
 \ge \beta \|\mu^h\|_W,
 $$
 with $\beta = \underline{C}_{\nu-1}/\overline{C}_{\nu}$. 
 \end{proof}
 
\begin{remark}\label{lem:trace}
The proofs of Lemma \ref{lem:Q} and Lemma \ref{lem:infsup} highlight the key role played by the  \emph{trace-compatibility} condition for our analysis. Specifically, this condition guarantees the existence of a lifting \eqref{eq:lift} of the Lagrange multiplier into the composite RB space, which is needed for the construction of the operator $\mathcal{Q}$. This operator is essential for showing that \eqref{eq:infsup} holds.
\end{remark}

This completes the verification of the assumptions necessary to assert that the Schur complement \eqref{eq:Schur-split} is SPD. Therefore, we can conclude that the IVR formulation for the composite coupled ROM-ROM problem is well-posed. 
 
\subsection{Composite reduced basis coupled ROM-FOM}\label{sec:analysis-ROM-FOM}
Let us now specialize the results of Section \ref{sec:analysis-ROM-FOM} to the case of the coupled 
ROM-FOM formulation \eqref{AdC:eq:SeparatedRR-ROM-FOM} (note that the FOM-ROM case, where a FOM is used in $\Omega_1$ and a ROM is used in 
$\Omega_2$, is analogous).  
To prove that the Schur complement
\begin{equation}\label{eq:Schur-split-ROM-FOM}
\widehat{S}: =
 \widehat{G}_1 \widetilde{M}_1^{-1} \widehat{G}_1^T + \widehat{G}_2 {M}_{2,D}^{-1} \widehat{G}_2^T
 \end{equation}
is symmetric and positive definite, we will use the same variational approach based on showing that the auxiliary mixed variational form \eqref{eq:mixed-aux} satisfies the conditions of Brezzi's theory. To that end,
 we specialize the functional setting from Section  \ref{sec:analysis-ROM-ROM} to the present case as follows.

First, we shall retain the norms \eqref{eq:norms} for the full order state space $V^h_D$ and the Lagrange multiplier space $W^h$. Next, we define the \emph{hybrid} state space $\widetilde{V}^h_H\subset V^h_D$ as 
 $$
 \widetilde{V}^h_{H} = \widetilde{S}^h_{1,C}\times {S}^h_{2,D},
$$
 where $\widetilde{S}^h_{1,C}= \widetilde{S}^h_{1,\gamma}\oplus\widetilde{S}^h_{1,0}$ is the composite RB space on $\Omega_1$. Finally, we set
 $$
 \widehat{W}^h
 =
 \left\{
 \begin{array}{ll}
 \widetilde{G}^h_{1,\gamma}, & \mbox{for option rLM} \\[1ex]
 G^h_2, &  \mbox{for option fLM}
 \end{array} \,.
 \right.
 $$
 As in the case of \eqref{AdC:eq:SeparatedRR}, it is straightforward to show that restriction of the auxiliary form \eqref{eq:mixed-aux} to $\widetilde{V}^h_H$ and $\widehat{W}^h$ produces the matrix on the left-hand side of \eqref{AdC:eq:SeparatedRR-ROM-FOM}. 
 Likewise, since $\widetilde{V}^h_{H}$ is a subspace of $V^h_D$, the first Brezzi condition is trivially satisfied. 
 
Specialized to  \eqref{AdC:eq:SeparatedRR-ROM-FOM}, the second (\textit{inf-sup}) condition now reads: 
for any $\widehat{\mu}^h\in\widehat{W}^h$ the form $b(\cdot,\cdot)$ satisfies the inequality
 \begin{equation}\label{eq:infsup-RF}
\sup_{\{{v}^h_{1,H},{v}^h_{2,H}\}\in\widetilde{V}^h_{H}\times\widetilde{V}^h_{H}} 
\frac{b({v}^h_{1,H},{v}^h_{2,H};{\mu}^h)}{\|\{{v}^h_{1,H},{v}^h_{2,H}\}\|_V}
\ge \beta \| \widehat{\mu}^h\|_W,
\end{equation}
with a mesh-independent constant $\beta$. Again, as in the case of \eqref{AdC:eq:SeparatedRR}, the proof of \eqref{eq:infsup-RF} requires an operator  $\mathcal{Q}: \widehat{W}^h\mapsto\widetilde{V}^h_H$ such that  
for every $\widehat{\mu}^h\in\widehat{W}^h$ there holds
\begin{equation}\label{eq:Q-RF}
(a)\ \ \|\widehat{\mu}^h\|^2_{W} \le C_1 b(\mathcal{Q}(\widehat{\mu}^h);\widehat{\mu}^h)
\quad
\mbox{and}
\quad
(b) \ \ \|\mathcal{Q}(\widehat{\mu}^h)\|_{V} \le C_2 \widehat{h}^\alpha_{\gamma} \|\widehat{\mu}^h\|_W \,,
\end{equation}
where $\widehat{h}_{\gamma} =h_{1,\gamma}$ for option rLM, $\widehat{h}_{\gamma} =h_{2,\gamma}$  for option fLM,
$\alpha \ge 0$, and $C_1$, $C_2$ are positive constants independent of $\widehat{h}_{\gamma}$. 

Since both options for the Lagrange multiplier space are trace-compatible, the operator $\mathcal{Q}$ for the coupled ROM-FOM case can be easily defined by a minor modification of \eqref{eq:op-Q} to account for the particular option used. 
Specifically, we set 
\begin{equation}\label{eq:op-Q-RF}
\mathcal{Q}(\mu^h) = 
\left\{
\begin{array}{rl}
\{{u}^{h,\mu}_{1,C},0 \} \in \widetilde{V}^h_C & \mbox{for option rLM}\\[1.5ex]
-\{0, {u}^{h,\mu}_{2,D} \} \in \widetilde{V}^h_C & \mbox{for option fLM}
\end{array}
\right. \,.
\end{equation}
where ${u}^{h,\mu}_{1,C}$ is the lifting of $\widehat{\mu}^h$ defined in Lemma \ref{lem:Q} and ${u}^{h,\mu}_{2,D}$ is the lifting of $\widehat{\mu}^h$ into the finite element space $S^h_{2,D}$ defined by the coefficient vector 
$\bm{u}^{\mu}_{2,D} = (\widehat{\bm{\mu}},\bm{0})$.
It is straightforward to verify that the operator \eqref{eq:op-Q-RF} satisfies the inequalities in \eqref{eq:Q-RF}. Then, using the same arguments as in the proof of Lemma \ref{lem:infsup}, one can show that \eqref{eq:infsup-RF} holds with the same constant $\beta$ as in that lemma. This establishes all conditions necessary for the Schur complement \eqref{eq:Schur-split-ROM-FOM} to be SPD.

A few comments about these results are now in order. As we have mentioned earlier, it is possible to prove the full column rank property of the transpose constraint matrices in the coupled ROM-ROM and ROM-FOM problems directly using purely algebraic tools. However, this approach fails to account for the fact that we are dealing with matrices obtained through a discretization process followed by a Galerkin projection. Such matrices carry an implicit dependence on the discretization mesh parameter and the size of the RBs employed in the projection. As a result, the condition numbers and the ranks of the matrices in the coupled FOM-FOM, ROM-ROM, and ROM-FOM problems also depend on these parameters. An algebraic approach treats these matrices as having a given fixed dimension and generally cannot reveal the dependence of condition numbers and ranks on the mesh size and the RB dimension.

In contrast, the \textit{inf-sup} condition not only establishes that these transpose constraint matrices have full column ranks, but it also provides  a lower bound on their smallest singular values; see, e.g., \cite{Bochev_06_ETNA}. 
In particular, by showing that  the \textit{inf-sup} conditions \eqref{eq:infsup} and \eqref{eq:infsup-RF} hold with mesh and RB-independent lower bounds, we effectively prove that the smallest singular values of the associated constraint matrices are bounded away from zero \emph{independently} of the mesh size and/or the dimensions of the reduced bases. 
To put it differently, by adopting a variational approach we are able to show that the transpose constraint matrices cannot become computationally rank-deficient both when one varies the mesh size of the coupled FOM-FOM and when one varies the dimension of the composite RB. This property is highly non-trivial to establish using algebraic approaches.



\section{Numerical results}\label{sec:num}
The objectives of this section are two-fold. First, we aim to confirm numerically the theoretical analysis in Section \ref{sec:analysis}, specifically the fact that projection of the coupled FOM-FOM \eqref{AdC:eq:Idx1System} onto the composite RB spaces, using a trace-compatible Lagrange multiplier space, leads to coupled ROM-ROM \eqref{AdC:eq:SeparatedRR} and ROM-FOM \eqref{AdC:eq:SeparatedRR-ROM-FOM} problems with non-singular Schur complements, independently of the \REV{RB} size. 
\REV{Our second goal is to demonstrate numerically the accuracy of the partitioned schemes in the two distinct simulation settings outlined in Section \ref{AdC:sec:intro}. We recall that the first one is characterized by a continuous diffusion coefficient, i.e., we consider \eqref{AdC:strongForm} with $\kappa_1=\kappa_2$. Keeping in mind the distinctions with the use of this term in the ROM literature highlighted in Section \ref{sec:organization}, we shall refer to this case as the ``domain decomposition'' (DD) setting. 
We also recall that the second setting represents a bona fide transmission problem (TP) characterized by a discontinuous diffusion coefficient. Section \ref{sec:single} presents reproductive and predictive results for the DD setting, while Section \ref{sec:multi} provides predictive tests in the TP setting. Since reproductive tests for the latter largely mirror the results for former, Section \ref{sec:multi} includes only \REVpk{predictive} TP tests.}

Following our previous work  \cite{DeCastro_23_INPROC}, we use 
the solid body rotation test from \cite{AdC:Leveque_96_SINUM}, specialized to \eqref{AdC:strongForm}. The computational domain for this test is the unit square $\Omega := (0,1) \times (0,1)$, the initial condition  comprises a cone, a cylinder, and a smooth hump (Figure \ref{AdC:fig:ICS}), and the advection
 field is defined as $\bm{a} := (0.5 - y, x - 0.5)$.
We split $\Omega$ into subdomains $\Omega_1 := (0,0.5) \times (0,1)$ and $\Omega_2 := (0.5,1) \times (0,1)$, impose homogeneous Dirichlet boundaries on all non-interface boundaries $\Gamma_i$, for $i=1,2$, and set the final time to be \REV{$T_f:=2 \pi$}, representing one full rotation.  

In all examples we use a uniform partition of  $\Omega$ into $64\times 64$ square elements yielding 4225 nodes in $\Omega$ and 2145 nodes in $\Omega_i$ for $i=1,2$, as seen in Figure \ref{AdC:fig:unifMesh}. 
\REV{It is easy to see that $\gamma^h_1 = \gamma^h_2$, i.e., the interface finite element partitions induced by the subdomain meshes are identical.} This setting eliminates error pollution due to non-matching interface grids from the numerical results and allows us to examine the ``pure'' properties of the partitioned schemes. In particular, in this setting, the IVR solution of the coupled FOM-FOM problem \eqref{AdC:eq:Idx1System} obtained by solving the subdomain equations in \eqref{AdC:eq:Idx1ODESystem} coincides, to machine precision, with a \emph{single domain solution} obtained by treating \eqref{AdC:strongForm} as a single PDE with a discontinuous coefficient; see \cite{AdC:CAMWA}.  
\REV{Finally, we note that} all results in this section were obtained by using the forward Euler method as the time discretization scheme.

\begin{figure}[!ht]
\begin{center}
	\subfigure[Initial conditions]{\includegraphics[width=0.475\linewidth]{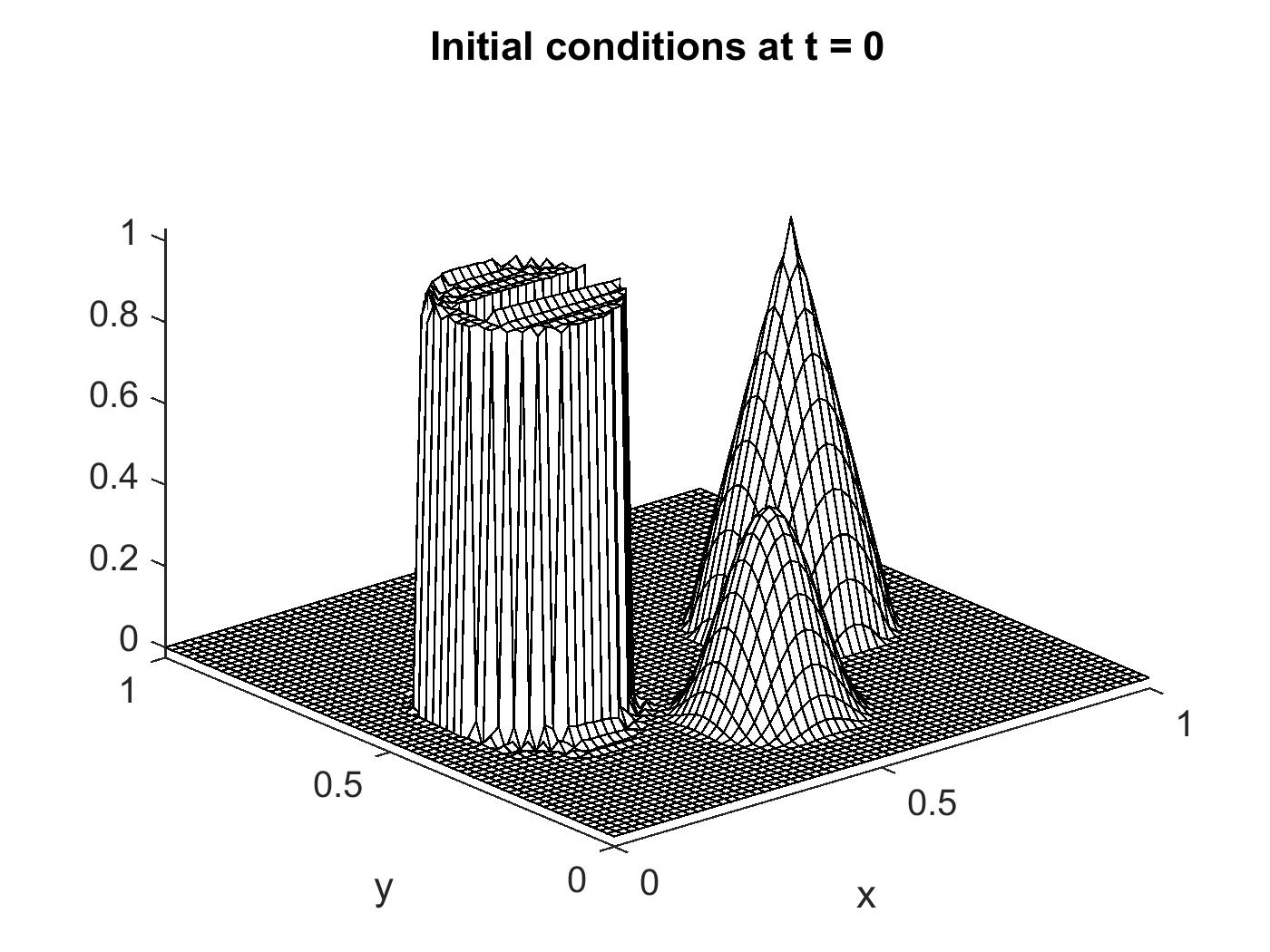}\label{AdC:fig:ICS}} 
	\subfigure[The finite element partitions $\Omega^h_1$ (blue) and $\Omega^h_2$ (red).]{\includegraphics[width=0.475\linewidth]{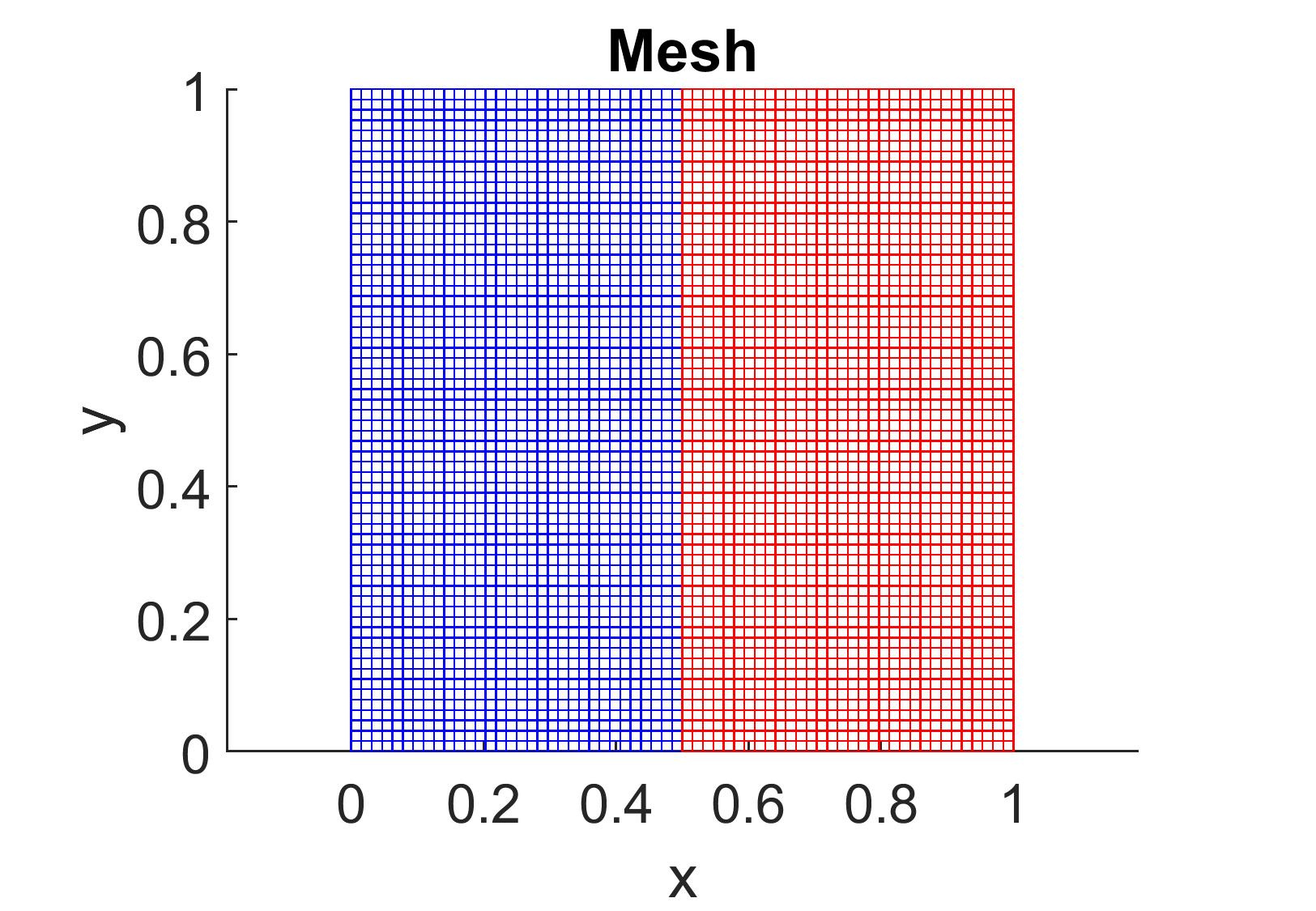}\label{AdC:fig:unifMesh}} \caption{Initial conditions, domain partitioning, and mesh for the  model 2D transmission problem.}
\end{center}
\end{figure}

\subsection{\REV{Domain decomposition setting}}\label{sec:single}
The model problem is parameterized with respect to the diffusion coefficient $\kappa_i$, \REV{which for the domain decomposition case is the same on both subdomains, i.e., $\kappa_1 = \kappa_2:=\kappa$.}
We perform the reproductive tests using a \REV{RB} obtained from solution snapshots corresponding to \REV{$\kappa= 10^{-5}$.} For the predictive tests, we define the reduced bases from snapshots computed with \REV{$\kappa = 10^{-2}$ and $\kappa= 10^{-8}$}, and then simulate the model problem  with \REV{$\kappa= 10^{-5}$}.

\REV{To obtain the subdomain solution snapshots\REVpk{,} we restrict a single domain finite element solution of \eqref{AdC:strongForm} to $\Omega_1$ and $\Omega_2$, respectively.} 
\REV{The single domain solution} is computed using the time step $\Delta t_s = 9.156\times 10^{-4}$ for $\kappa = 10^{-2}$, and $\Delta t_s = 1.684\times 10^{-3}$ for $\kappa = 10^{-5}$ and $10^{-8}$.
\REV{These time steps are determined from  the Courant-Friedrichs-Lewy (CFL) condition.}
\REV{Since both the reproductive and the predictive tests are performed for  $\kappa= 10^{-5}$, the 
partitioned solution scheme employed for all coupled formulations uses the time step $\Delta t =  1.684\times 10^{-3}$.}
To demonstrate the importance of the composite \REV{RB} and trace-compatible Lagrange multipliers for the properties of the Schur complement, we present results for the partitioned solution of the coupled ROM-ROM and FOM-ROM problems, implemented with the composite RB and with alternative choices for the Lagrange multiplier (LM) space. We use as a benchmark the single domain solution introduced earlier. For the coupling of a ROM to a FOM we choose to implement the FOM on $\Omega_1$ and the ROM on $\Omega_2$, but note that similar performance is achieved if this choice were reversed. Still, in what follows, for consistency, we label this formulation as FOM-ROM.
 %
%
To summarize, we perform tests using the following schemes:
\begin{itemize}
\item RR-rLM: partitioned solution of the coupled ROM-ROM problem \eqref{AdC:eq:SeparatedRR}. 
\item FR-fLM: partitioned solution of the coupled FOM-ROM \eqref{AdC:eq:SeparatedRR-ROM-FOM} with the (full) LM space  $G^h_1$.
\item FR-rLM: partitioned solution of the coupled FOM-ROM \eqref{AdC:eq:SeparatedRR-ROM-FOM} with the (reduced) LM space  $\REV{\Phi}_{2,\gamma}$.
\item FF-fLM partitioned solution of the coupled FOM-FOM \eqref{AdC:eq:Idx1System}.
\end{itemize}
The partitioned schemes above are supported by rigorous theory that asserts the existence of \REV{well-posed} Schur complements for the associated coupled problems, i.e., Schur complements that are provably non-singular and have bounded condition numbers. As an example of a formulation that is not supported by such a theory, we consider
\begin{itemize}
\item RR-fLM: partitioned solution of the coupled ROM-ROM problem \eqref{AdC:eq:SeparatedRR}  with the (full) LM space  $G^h_1$.
\end{itemize}

\subsubsection{Reproductive results}
First, we present the results for the reproductive case.
With the snapshot time step set to $\Delta t_s =1.684\times 10^{-3}$, 3732 snapshots are collected. A prerequisite for an effective ROM is the rapid decay of the singular values. We first confirm that this is indeed the case and that most of the energy, defined in \eqref{AdC:eq:snapEnergy}, is contained within a much smaller subset of the snapshots. 
\begin{figure}[t]
  \begin{center}
   \includegraphics[width=0.5\textwidth]{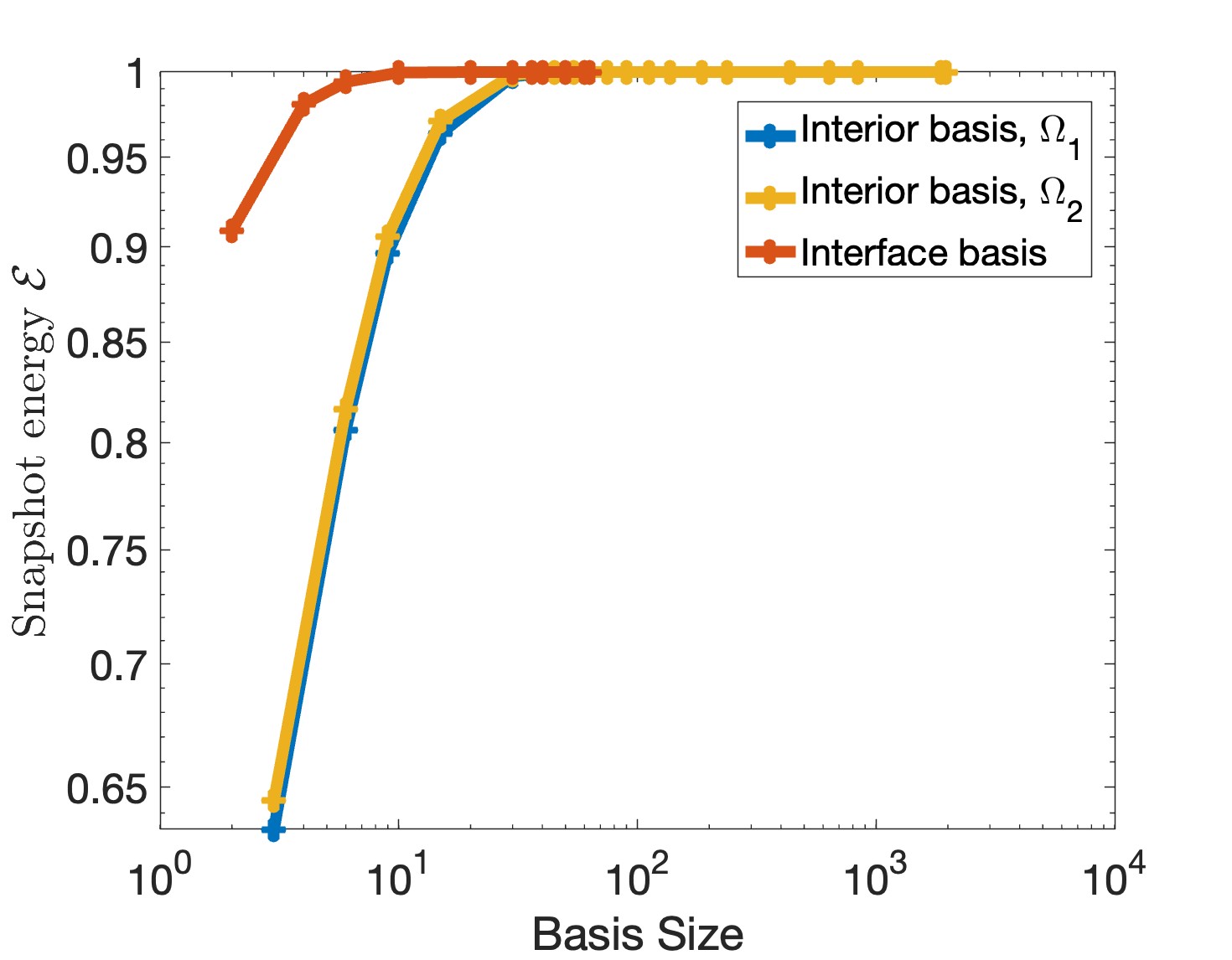}
  \end{center}
  \vspace{-2ex}
\caption{Snapshot energy \eqref{AdC:eq:snapEnergy} as a function \REV{of interior $\REV{\Phi}_{i,0}$ and interface $\REV{\Phi}_{i,\gamma}$ basis sizes} in the reproductive regime.} \label{AdC:fig:snapEnergy}
\end{figure}
The plots in Figure \ref{AdC:fig:snapEnergy} show the energy retained in the interior $\REV{\Phi}_{i,0}$ and interface $\REV{\Phi}_{i,\gamma}$ \REV{RB} sets as a function of their 
respective sizes, $d_{i,0}$ and $d_{i,\gamma}$. 
The plot reveals that just $d_{1,0}=24$, $d_{2,0} = 21$ and $d_{i,\gamma}=6$ interior and interface modes, 
respectively, are sufficient to capture 99\% of the energy in $X_{i,0}$ and $X_{i,\gamma}$. 
Setting  $d_{1,0}=57$, $d_{2,0} = 50$, and $d_{i,\gamma}=18$ captures 99.999\% of the snapshot energies. 
In what follows, we denote the size of the composite \REV{RB} 
$\REV{\Phi}_{i,C}=\{\REV{\Phi}_{i,\gamma},\REV{\Phi}_{i,0}\}_{i=1,2}$ 
as $d_{i,C} = d_{i,0}+d_{i,\gamma}$.

To assess the accuracy of the partitioned solutions of the coupled ROM-ROM and FOM-ROM problems, 
we report their relative errors with respect to the single domain solution of the model problem \REV{with the same diffusion coefficient as used for the reproductive tests, i.e., $\kappa= 10^{-5}$.}
We define these errors as 
\begin{align}\label{AdC:eq:relError}
\REV{\epsilon(t) := \frac{||\{\bm{u}^{t}_{1,P},\bm{u}^{t}_{2,P}\} - \{\bm{u}^{t}_{1,S},\bm{u}^{t}_{2,S}\}||_V}{||\{\bm{u}^{t}_{1,S},\bm{u}^{t}_{2,S}\}||_V},}
\end{align}
where $\|\cdot\|_V$ is the norm defined in \eqref{eq:norms},
$\REV{\{\bm{u}^{t}_{1,S},\bm{u}^{t}_{2,S}\}}$ 
is the single domain solution of the model problem, and $\REV{\{\bm{u}^{t}_{1,P},\bm{u}^{t}_{2,P}\}}$ denotes a partitioned solution of the coupled ROM-ROM, FOM-ROM or FOM-FOM problems, \REV{at a chosen time $t \in [0, 2\pi]$}.

When using the composite \REV{RB}, one can set the 
dimensions $d_{i,0}$ and $d_{i,\gamma}$ for the interior and interface bases independently. 
Here, we choose $d_{i,\gamma}$ to be two-fifths of the total composite 
basis size $d_{i,C}$, i.e., we set 
\begin{equation}\label{eq:RBdim}
d_{i,\gamma} = \frac{2}{5}  \Big(d_{i,\gamma} + d_{i,0}  \Big) \implies d_{i,\gamma} = \frac{2}{3} d_{i,0}.
\end{equation}
Note that the dimension of the reduced Lagrange multiplier space in the coupled ROM-ROM \eqref{AdC:eq:SeparatedRR} is given by either $d_{1,\gamma}$ or $d_{2,\gamma}$.

When setting $d_{i,\gamma}$, one also has to account for the fact that the total number of modes  $d^{\max}_{i,\gamma}$ available for the construction of the interface \REV{RB} $\REV{\Phi}_{i,\gamma}$ is, in general, smaller than the number  $d^{\max}_{i,0}$ available for the construction of $\REV{\Phi}_{i,0}$. As a result, direct application of \eqref{eq:RBdim} may yield values for $d_{i,\gamma}$ that exceed the number $d^{\max}_{i,\gamma}$ of interface modes available. To avoid this, we further refine the choice of the interface dimension according to
$$d_{i,\gamma} = \min\Big\{\frac{2}{3} d_{i,0},d^{\max}_{i,\gamma} \Big\}.$$
In all our simulations $d^{\max}_{i,\gamma} = 63$.

\begin{figure}[t!]
  \begin{center}
    \includegraphics[width=0.6\textwidth]{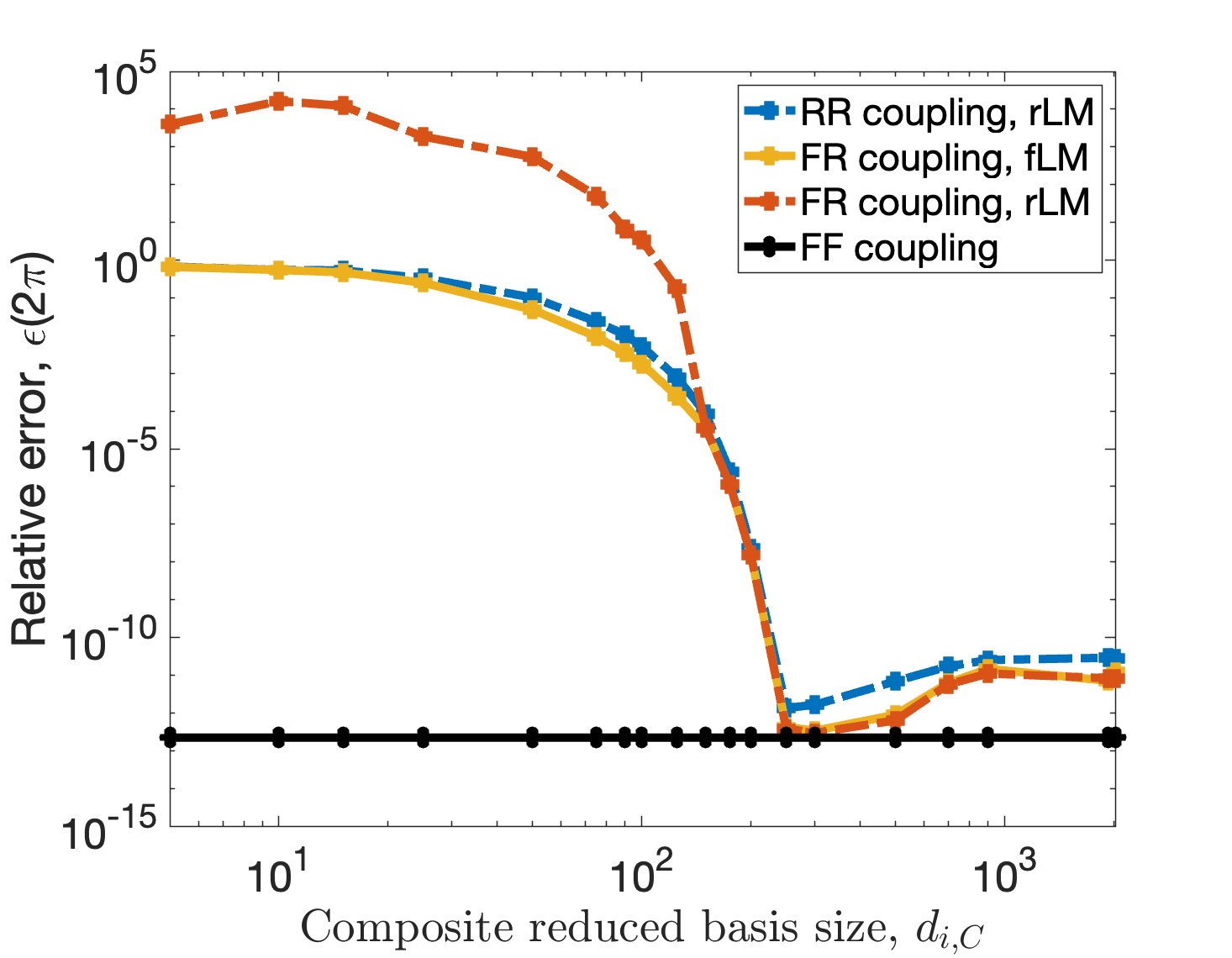}
  \end{center}
  \vspace{-2ex}
\caption{Relative error \eqref{AdC:eq:relError} at \REV{the final time} $T_f=2\pi$ of the partitioned solution for each coupled formulation as a function of the composite reduced basis size $d_{i,C}=d_{i,0}+d_{i,\gamma}$  in the reproductive regime.} \label{AdC:fig:L2err}
\end{figure}
\begin{figure}[h!]
\centering
\subfigure[$d_{i,0} = 15, d_{i,\gamma} = 10$]{\includegraphics[scale=.10]{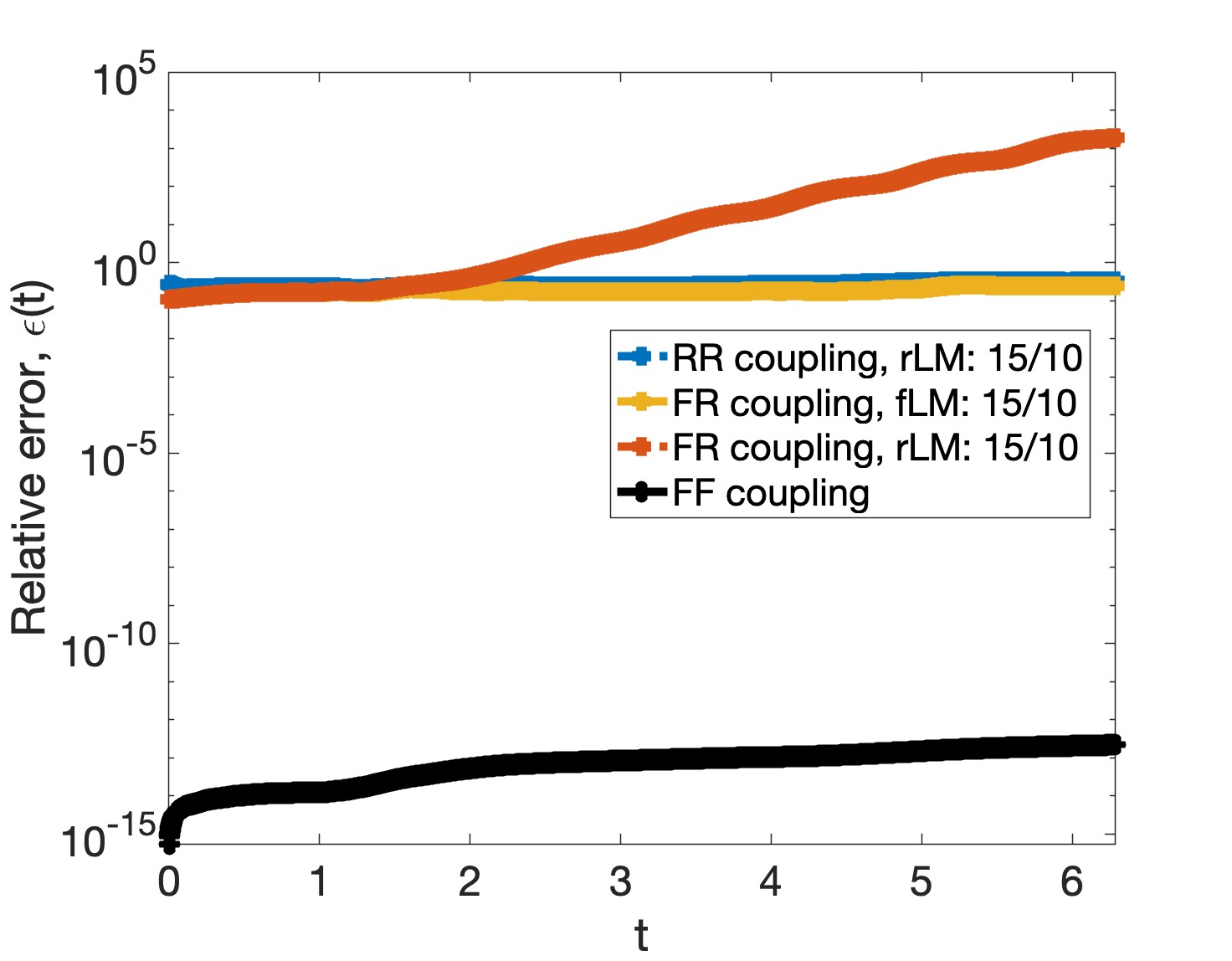}\label{AdC:fig:errVTime1510}} 
\subfigure[$d_{i,0} = 60, d_{i,\gamma} = 40$]{\includegraphics[scale=.10]{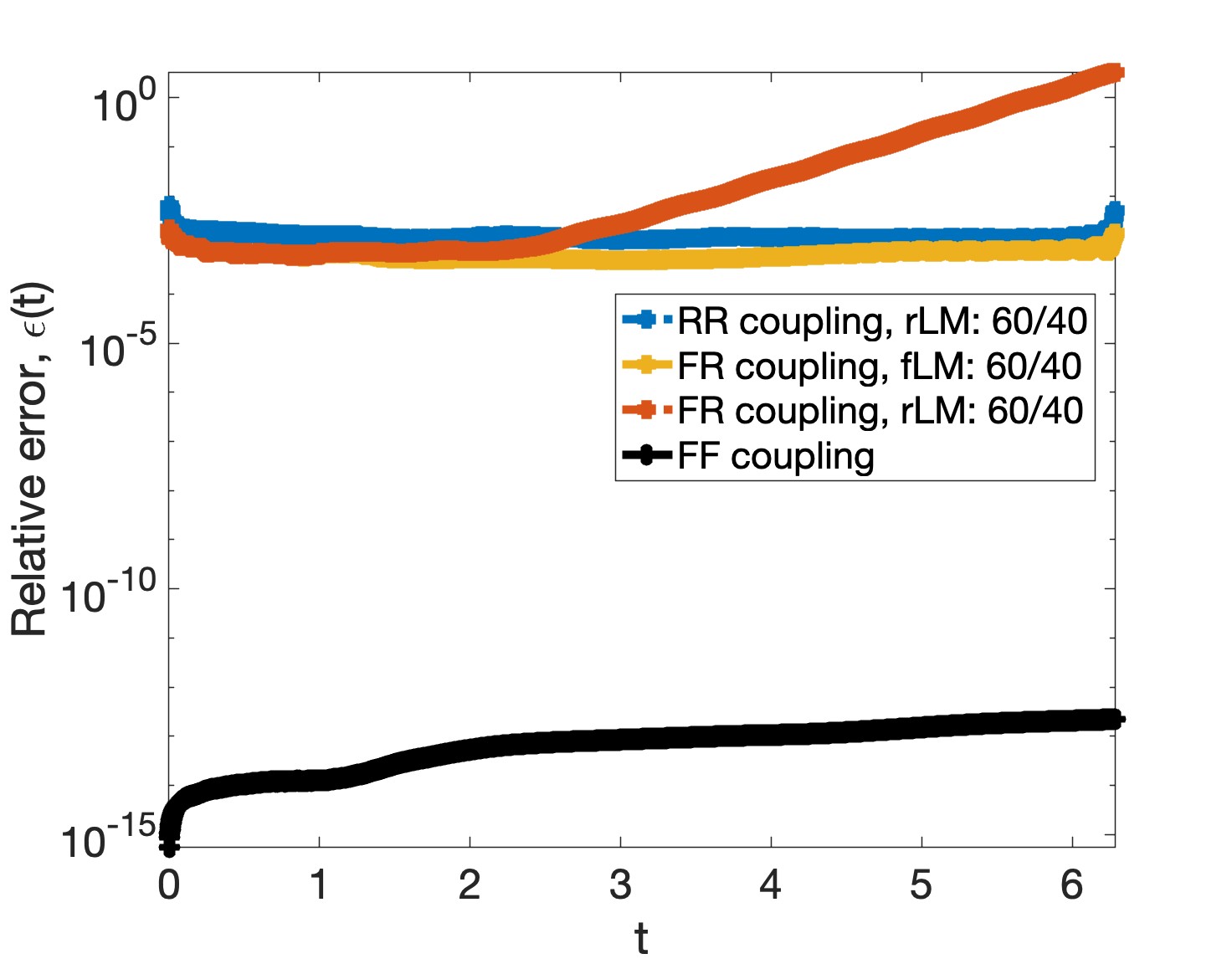}\label{AdC:fig:errVTime6040}} 
\subfigure[$d_{i,0} = 90, d_{i,\gamma} = 60$]{\includegraphics[scale=.10]{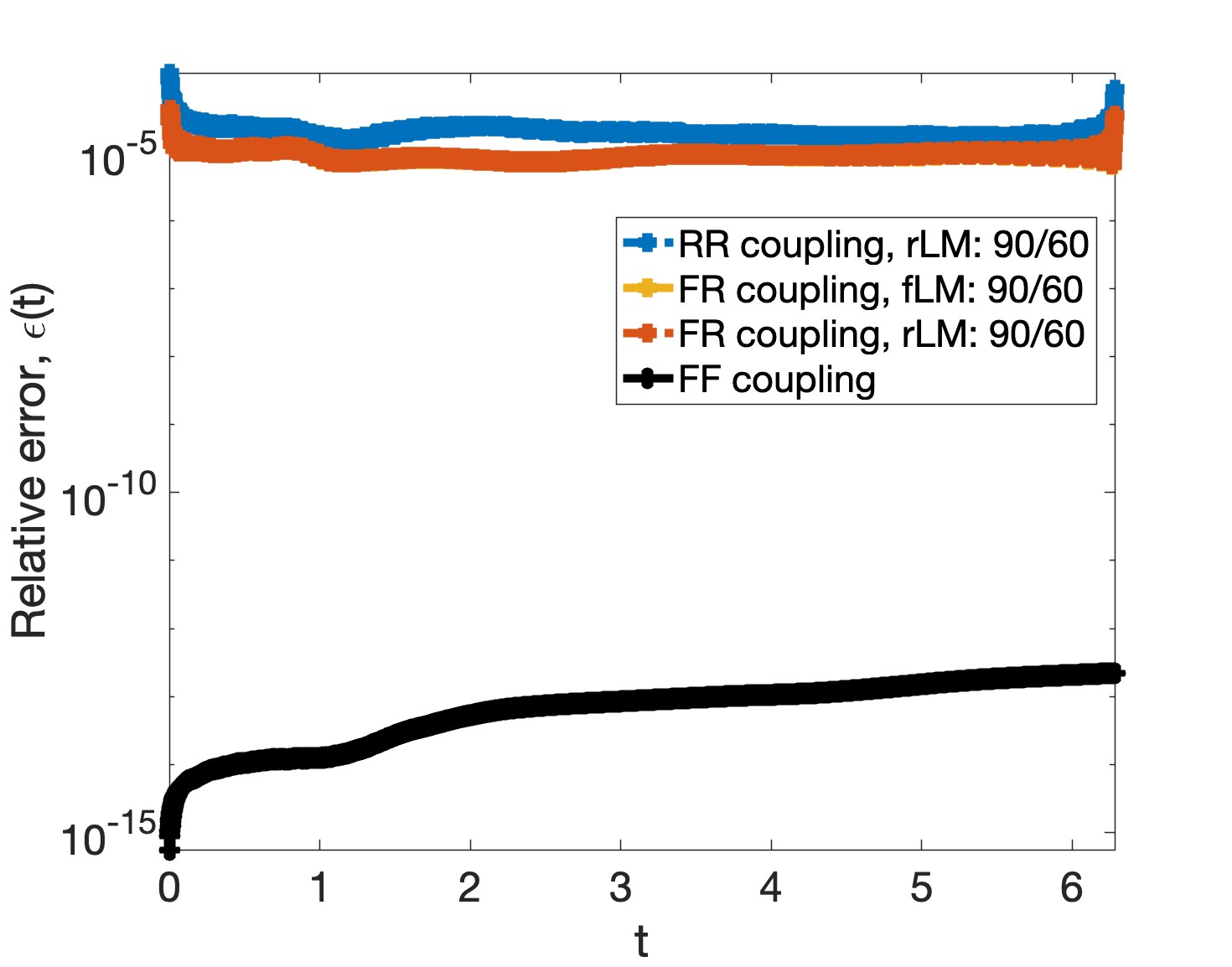}\label{AdC:fig:errVTime9060}} 
\subfigure[$d_{i,0} = 237, d_{i,\gamma} = 63$]{\includegraphics[scale=.10]{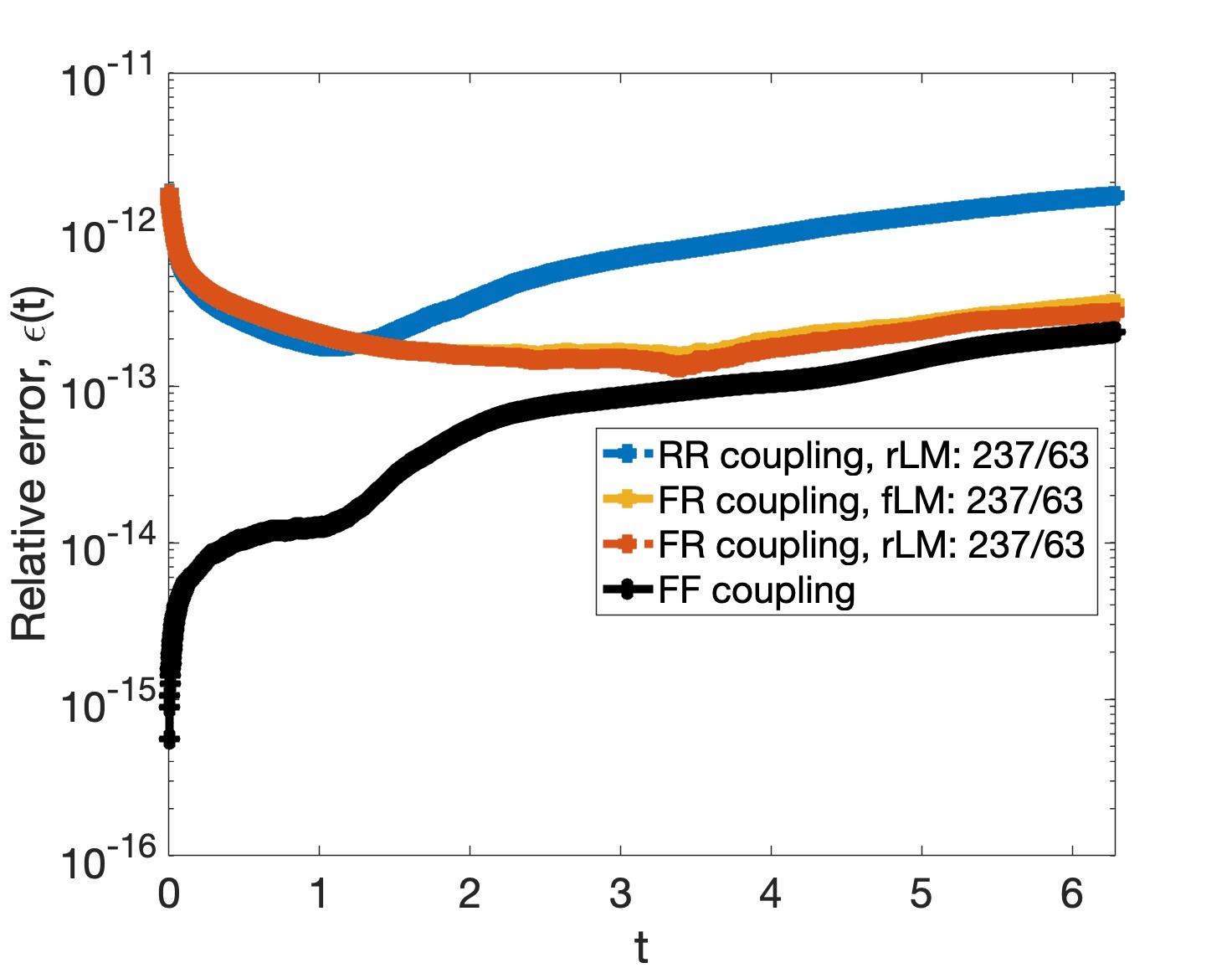}\label{AdC:fig:errVTime23763}} 
\subfigure[$d_{i,0} = 1953, d_{i,\gamma} = 63$]{\includegraphics[scale=.10]{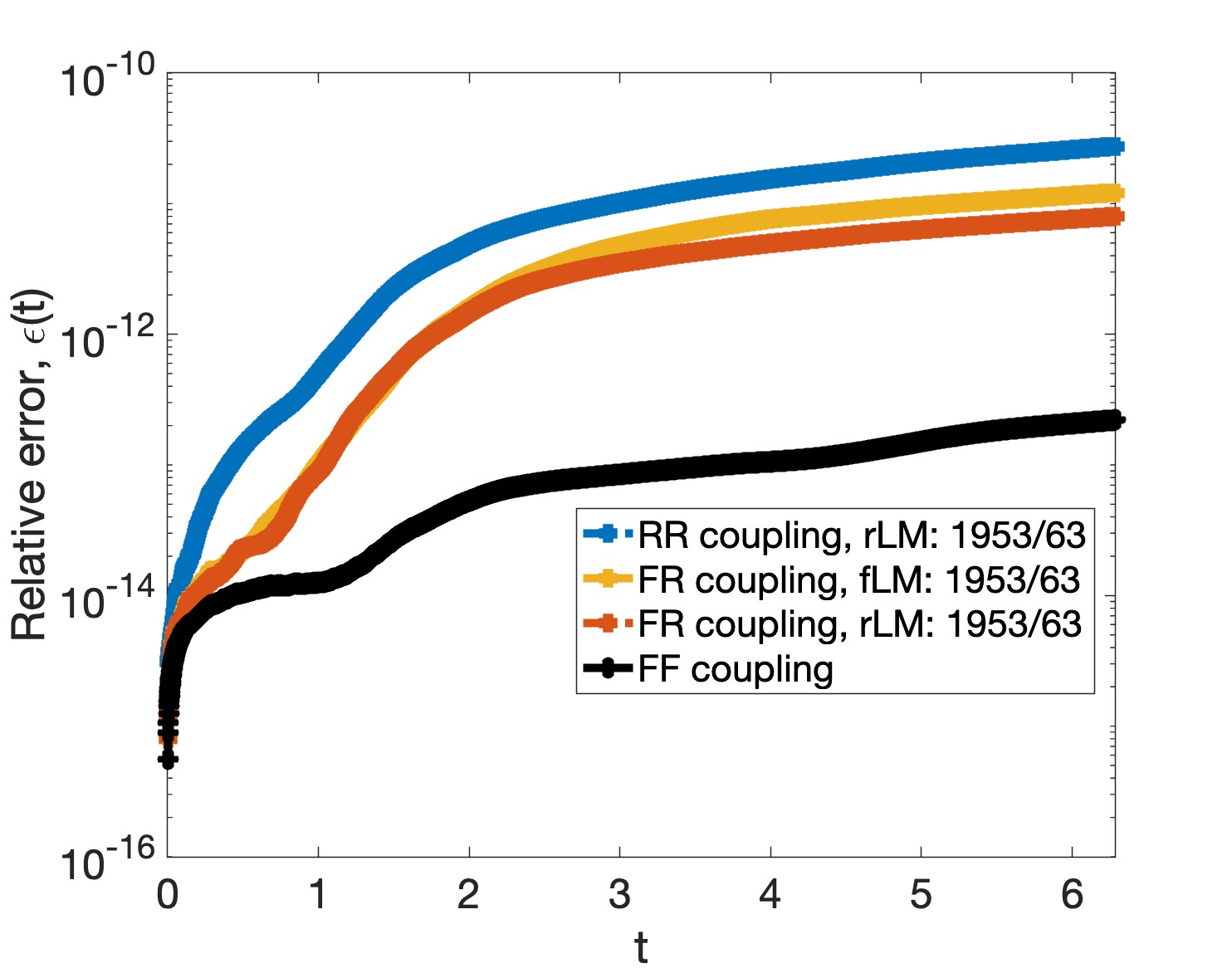}\label{AdC:fig:errVTime195363}} 
\subfigure[Single domain ROM]{\includegraphics[scale=.08]{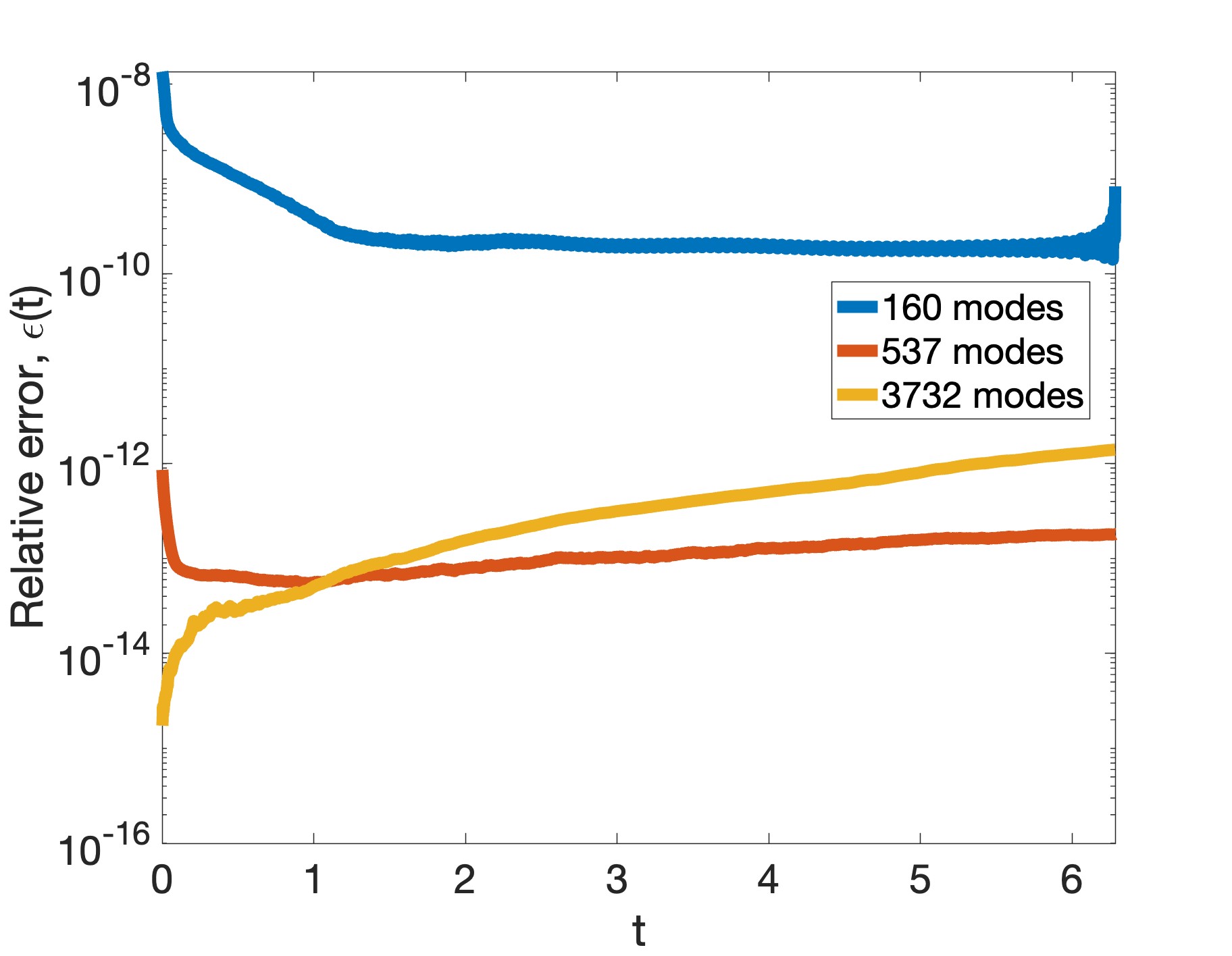}\label{AdC:fig:L2Time_SingleROM_ReprodSinglePhysics}} 
\caption{\REV{Relative error \eqref{AdC:eq:relError} of the partitioned solution for select RB sizes as a function of time in the reproductive regime.}}\label{AdC:fig:L2errVsTime_BasisSizes}
\end{figure}

We first examine the \REV{behavior of the} relative error \eqref{AdC:eq:relError} \REV{when our partitioned schemes are applied to} \REV{coupled} formulations \REV{with provably well-posed Schur complements.} 
\REV{Figure \ref{AdC:fig:L2err} plots $\epsilon(T_f)$, i.e., the relative error at the final time, as a function of the composite \REV{RB} size $d_{i,C}$.
The plots in this figure show that} as the total size of the composite \REV{RB} $d_{i,C}$ is increased, the error \REV{at the final time} in the partitioned solutions of the coupled ROM-ROM and FOM-ROM problems approaches that of the partitioned solution of the coupled FOM-FOM problem, as expected for a reproductive test. 
\REV{At the same time, we observe that, while $\epsilon(T_f)$ for the coupled FOM-ROM problem with the reduced space Lagrange multiplier is essentially the same as for the other formulations when $d_{i,C}$ is large enough, it is significantly higher for small RB sizes.}

\REV{To examine this issue further we plot the relative error \emph{as a function of time} for ``small'' ($d_{i,C}\le 100$, Fig.~\ref{AdC:fig:L2errVsTime_BasisSizes}(a,b)), ``medium'' ($15\REVpk{0}\le d_{i,C}\le 300$, Fig.~\ref{AdC:fig:L2errVsTime_BasisSizes}(c,d)), and ``large'' ($d_{i,C} = 2016$, Fig.~\ref{AdC:fig:L2errVsTime_BasisSizes}(e)) composite RB sizes. Note that the ``large'' RB has the same number of modes as the FOM. 
For comparison, in Fig.~\ref{AdC:fig:L2errVsTime_BasisSizes}(f) we plot $\epsilon(t)$ for three instances of a single domain ROM. \REVpk{The three instances have approximately the same total number of modes,  i.e., $d_{1,0}+d_{2,0}+d_{i,\gamma}$, as the ``small'', ``medium'' and ``large'' RBs in Fig.~\ref{AdC:fig:L2errVsTime_BasisSizes}(b), (d), and (e), respectively.}}%

\REV{The plots in Figure \ref{AdC:fig:L2errVsTime_BasisSizes} clearly show that for ``medium'' and ``large'' RB sizes $\epsilon(t)$ for all coupled formulations is stable and roughly comparable to that of the single domain ROM. 
However, the behavior of $\epsilon(t)$ for the FR-rLM formulation deviates significantly from that of the other formulations when the RB size is small. The plots in Fig.~\ref{AdC:fig:L2errVsTime_BasisSizes}(a,b) show that, up to $t\approx 2$, the relative errors of all formulations have roughly the same magnitude. However, as time integration continues past this time, $\epsilon(t)$ for the FR-rLM formulation begins to grow, while the relative error of all other formulations remains about the same. Therefore, the large size of $\epsilon(T_f)$ for this formulation is caused by the accumulation of errors during the explicit time stepping.}

\REV{Although a rigorous analysis of the source of these errors is beyond the scope of this paper, below we offer some insights into the probable cause for the growth of $\epsilon(t)$ for the FR-rLM formulation with a ``small'' RB size. Before we provide the details, let us remark that the behavior of $\epsilon(t)$ in this case does not contradict the analysis in Section \ref{sec:analysis} because our theory asserts well-posedness of the Schur complement \eqref{eq:Schur-split}, which is \emph{independent} of time. In fact, the plots in Figure \ref{AdC:fig:condS}, that will be discussed in more detail shortly, reveal that for ``small'' RB sizes condition number of the Schur complement for the FR-rLM formulation is actually lower than that for the benchmark coupled FOM-FOM problem.}

\REV{Since FR-rLM and FR-fLM only differ in the choice of the Lagrange multiplier space\REVpk{,} let us compare and contrast the enforcement of the coupling condition \eqref{AdC:interfaceConditions} in these formulations. This task is greatly simplified by the fact that in all our examples $\gamma_{1}^h = \REVpk{\gamma_2^{h}}$. As a result, $S^h_{1,\gamma} = S^h_{2,\gamma}$, $n_{1,\gamma}=n_{2,\gamma}=n_{\gamma}$, and $G_{1,\gamma} = G_{2,\gamma} = G_{\gamma}$, where $G_{\gamma}$ is a symmetric and positive definite interface mass matrix.}
\REV{In the FR-fLM formulation $\lambda^h\in G^h_1$. Taking into account that the FOM is defined on $\Omega_1$ and that $G_{1,\gamma} = G_{2,\gamma} = G_{\gamma}$, the last equation in \eqref{AdC:eq:SeparatedRR-ROM-FOM}  specializes to
\begin{equation}\label{eq:FR-fLM-interface}
G_{1,\gamma} \dot{\bm{u}}_{1,\gamma} - 
G_{2,\gamma}\Phi_{2,\gamma} \dot{\widetilde{\bm{u}}}_{2,\gamma} 
=
G_\gamma\left(\dot{\bm{u}}_{1,\gamma} - 
\Phi_{2,\gamma} \dot{\widetilde{\bm{u}}}_{2,\gamma} \right)
= 
0 \,.
\end{equation}
Since $G_{\gamma}$ is non-singular, it follows that 
\begin{equation}\label{eq:equality}
 \dot{\bm{u}}_{1,\gamma}  - \Phi_{2,\gamma} \dot{\widetilde{\bm{u}}}_{2,\gamma} = 0\,.
\end{equation}
Thus, in the FR-fLM formulation on \emph{matching interface grids} the coupling condition is enforced \emph{pointwise}.} 

\REV{In contrast, in the FR-rLM formulation $\bm{\lambda} = \Phi_{2,\gamma}\widetilde{\bm{\lambda}}$ and the  last equation in \eqref{AdC:eq:SeparatedRR-ROM-FOM} now assumes the form
\begin{equation}\label{eq:FR-rLM-interface}
\Phi_{2,\gamma}^TG_{1,\gamma} \dot{\bm{u}}_{1,\gamma} - 
\Phi_{2,\gamma}^TG_{2,\gamma}\Phi_{2,\gamma} \dot{\widetilde{\bm{u}}}_{2,\gamma}
=
\Phi_{2,\gamma}^T G_{\gamma}\left( \dot{\bm{u}}_{1,\gamma}  - \Phi_{2,\gamma} \dot{\widetilde{\bm{u}}}_{2,\gamma}\right)
=0 \,.
\end{equation}
Let $\Phi_{2,\gamma}^{\prime}$ be the $n_\gamma\times r_2-d_{2,\gamma}$ matrix of discarded left singular vectors from the SVD decomposition of the snapshot set $X_{2,\gamma}$. Then, \eqref{eq:FR-rLM-interface} \REVpk{permits} the existence \REVpk{of} a \REVpk{nonzero} vector $\bm{\delta}\in \mathbb{R}^{(r_2-d_{2,\gamma})}$ such that
$$
G_{\gamma}\left( \dot{\bm{u}}_{1,\gamma}  - \Phi_{2,\gamma} \dot{\widetilde{\bm{u}}}_{2,\gamma}\right) 
=
\Phi_{2,\gamma}^{\prime}\bm{\delta} \,.
$$
It follows that
\begin{equation}\label{eq:inequality}
 \dot{\bm{u}}_{1,\gamma}  - \Phi_{2,\gamma} \dot{\widetilde{\bm{u}}}_{2,\gamma} 
 = G_{\gamma}^{-1}\Phi_{2,\gamma}^{\prime}\bm{\delta}\,.
\end{equation} 
In other words, in the FR-rLM formulation\REVpk{,} the coupling condition is satisfied approximately whereas in the FR-fLM case this condition holds pointwise.}

\REV{The plots in \REVpk{Figure \ref{AdC:fig:snapEnergy}} reveal that just 6 interface modes are sufficient to capture $99\%$ of the snapshot energy \eqref{AdC:eq:snapEnergy} of $X_{i,\gamma}$. However, for small values of $d_{2,\gamma}$\REVpk{,} the snapshot energy contained in the discarded modes  $\Phi_{2,\gamma}^{\prime}$ may still be large enough so that accumulation of errors at each time step due to the right hand side in \eqref{eq:inequality} eventually destroys the accuracy of the numerical solution.}

\REV{We note that this phenomenon is not limited to the FOM-ROM formulation\REVpk{,} but is rather a consequence of coupling \REVpk{subdomain formulations that are imbalanced with respect to their accuracy.} Indeed, we observed similar error growth when a ROM with a ``large'' RB size was coupled to a ROM with a ``small'' RB size. Thus,  when coupling subdomain models that differ significantly in their resolution, a useful rule of a thumb would be to define the Lagrange multiplier space by always using the side with a higher resolution.}

\begin{figure}[t!]
\centering
	\subfigure[All coupled problems]{\includegraphics[scale=.15]{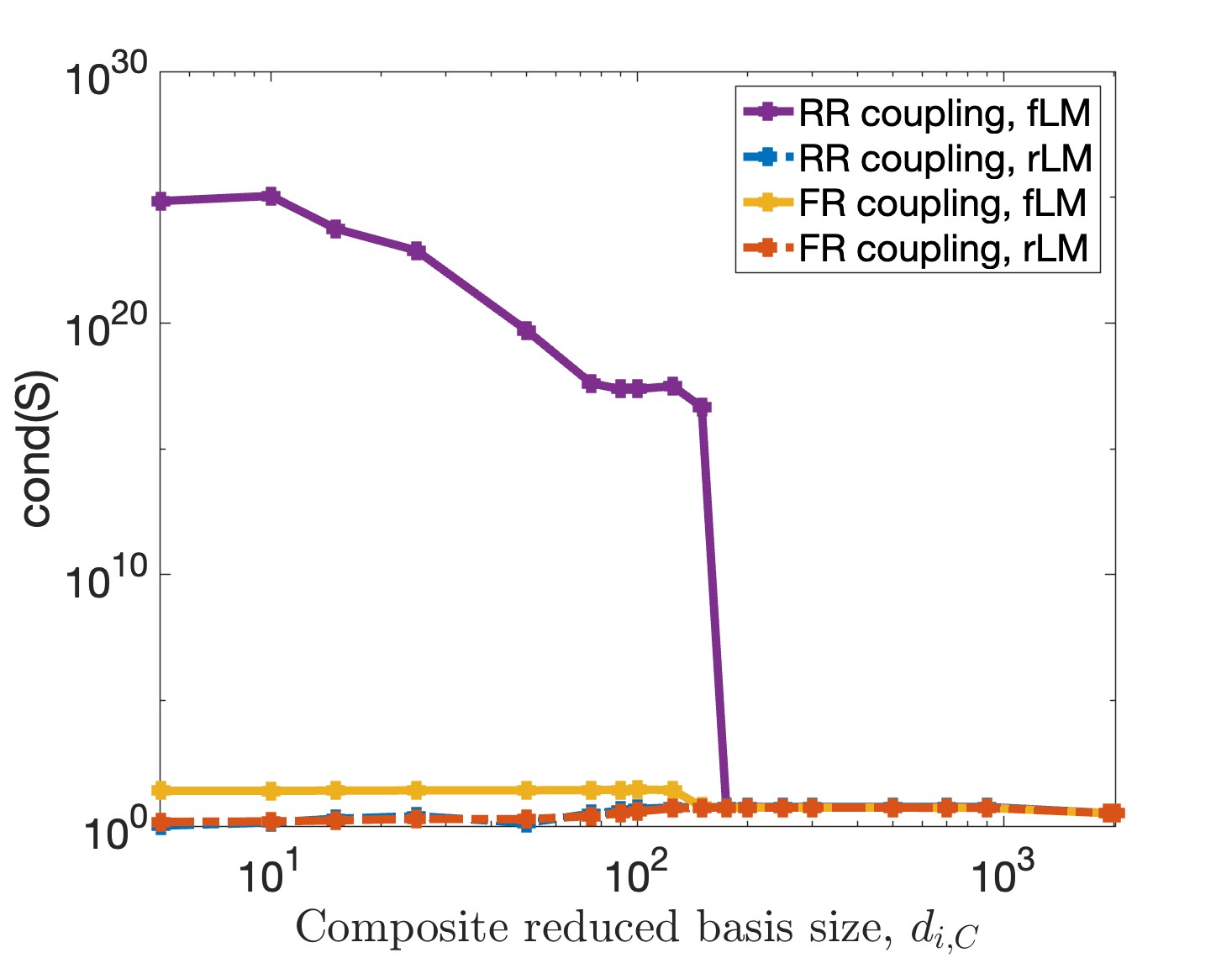}\label{AdC:fig:cond_RRfLM_reprod}} 
\subfigure[Provably well-posed Schur complement]{\includegraphics[scale=.15]{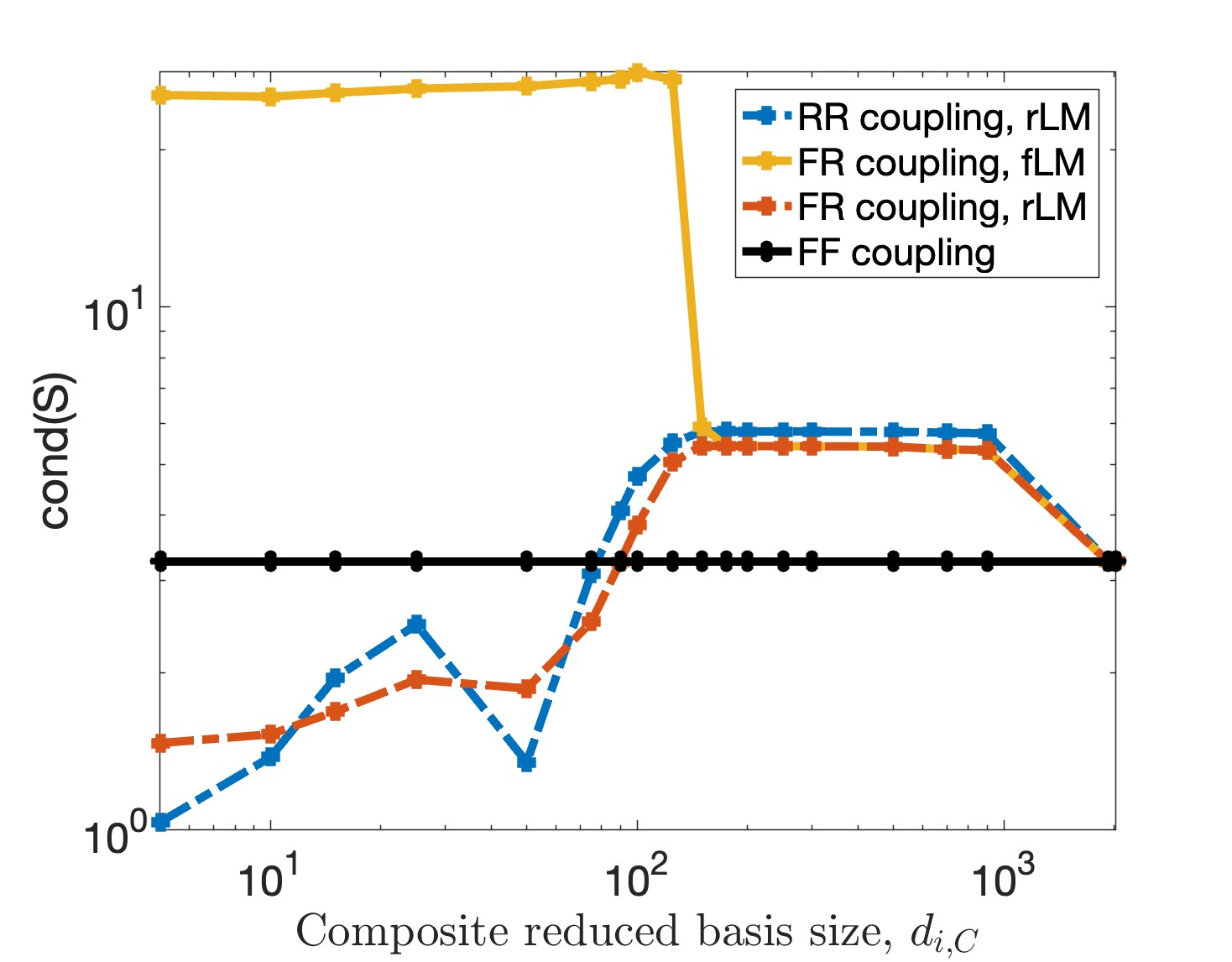}\label{AdC:fig:cond_FFfLM_reprod}}  
\caption{Condition number of Schur complement matrix for each coupled formulation as a function of the composite reduced basis size $d_{i,C}=d_{i,0}+d_{i,\gamma}$ size  in the reproductive regime.  Subfigure (a) reports results for all methods evaluated, whereas subfigure (b) focuses only 
on methods with provably well-posed Schur complements.}\label{AdC:fig:condS}
\end{figure}

\REV{Next, we highlight the importance of the Lagrange multiplier basis for the well-posedness of the Schur complement} in the coupled ROM-ROM and FOM-ROM problems. \REV{To that end,}  in Figure \ref{AdC:fig:condS}, we compare and contrast the condition number \REV{of this matrix for the couplings that satisfy the trace compatibility condition  
with the RR-fLM scheme that does not satisfy this condition.} Conditioning of the Schur complement is a measure of its ``well-posedness'' and can be used to confirm the conclusions from the  analysis in Section \ref{sec:analysis}.

The most important takeaway from Figure \ref{AdC:fig:condS} is that the Schur complements of the coupled ROM-ROM \eqref{AdC:eq:SeparatedRR} and FOM-ROM \eqref{AdC:eq:SeparatedRR-ROM-FOM} problems, which employ \emph{trace-compatible} Lagrange multiplier spaces conforming with the theory in Section \ref{sec:analysis}, have essentially constant condition numbers\footnote{\REV{Although, in Figure \ref{AdC:fig:condS}(b), the condition number of the Schur complement for the FR-fLM problem appears significantly larger than that for the other couplings,  the range for the $y$-axis in Figure \ref{AdC:fig:condS}(b) is $[0,28.1]$ with the upper limit representing $\max \rm{cond}(S)$ for the FR-fLM problem. Thus, in all cases $\rm{cond}(S)$ is of order at most $O(10)$}.}
 with respect to the reduced basis dimension. 
This corroborates numerically the theoretical conclusions asserting that the condition number of the Schur complement should be independent of the size of the reduced basis.  
Moreover, we see that using the trace-compatible Lagrange multiplier spaces required by the theory produces coupled ROM-ROM and FOM-ROM problems whose Schur complements are of the same order as those of the coupled FOM-FOM problem.
We recall that the latter also uses trace-compatible Lagrange multiplier spaces and is provably well-posed \cite{AdC:CAMWA}.

At the same time, using Lagrange multiplier spaces that are \emph{not trace-compatible} clearly leads to Schur complements whose condition number \emph{depends} on the reduced basis size. Specifically, by inspecting Figure \ref{AdC:fig:condS}(a), we see that when  the full order interface finite element space $G^h_2$ is used as a Lagrange multiplier space to couple two ROMs (RR-fLM), the Schur complement of the resulting coupled problem has very high condition numbers for smaller dimensions of the reduced basis. While the condition number does decrease as the reduced basis size increases, it is still high compared to that of the coupled FOM-FOM problem, and it only reaches a reasonable scale when the reduced basis size is larger than one would wish to consider. 
\begin{figure}[!t]
\centering
\subfigure[RR-rLM, 15/10 modes]{\includegraphics[scale=.15]{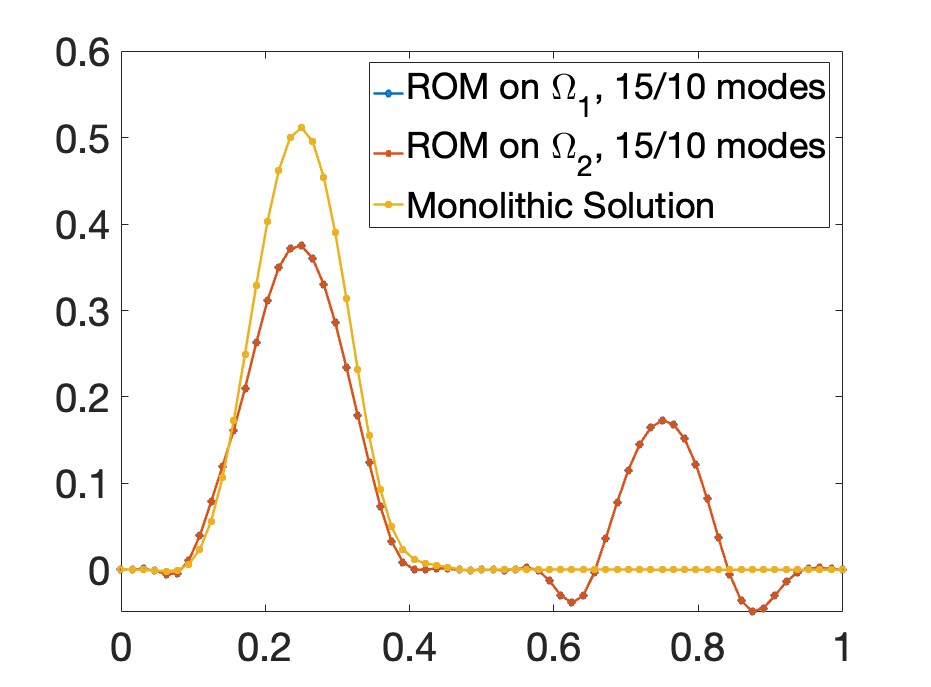}\label{AdC:fig:RR_Inter_15_10}} 
\hfill
\subfigure[\REV{FR-fLM, 15/10 modes}]{\includegraphics[scale=.078]{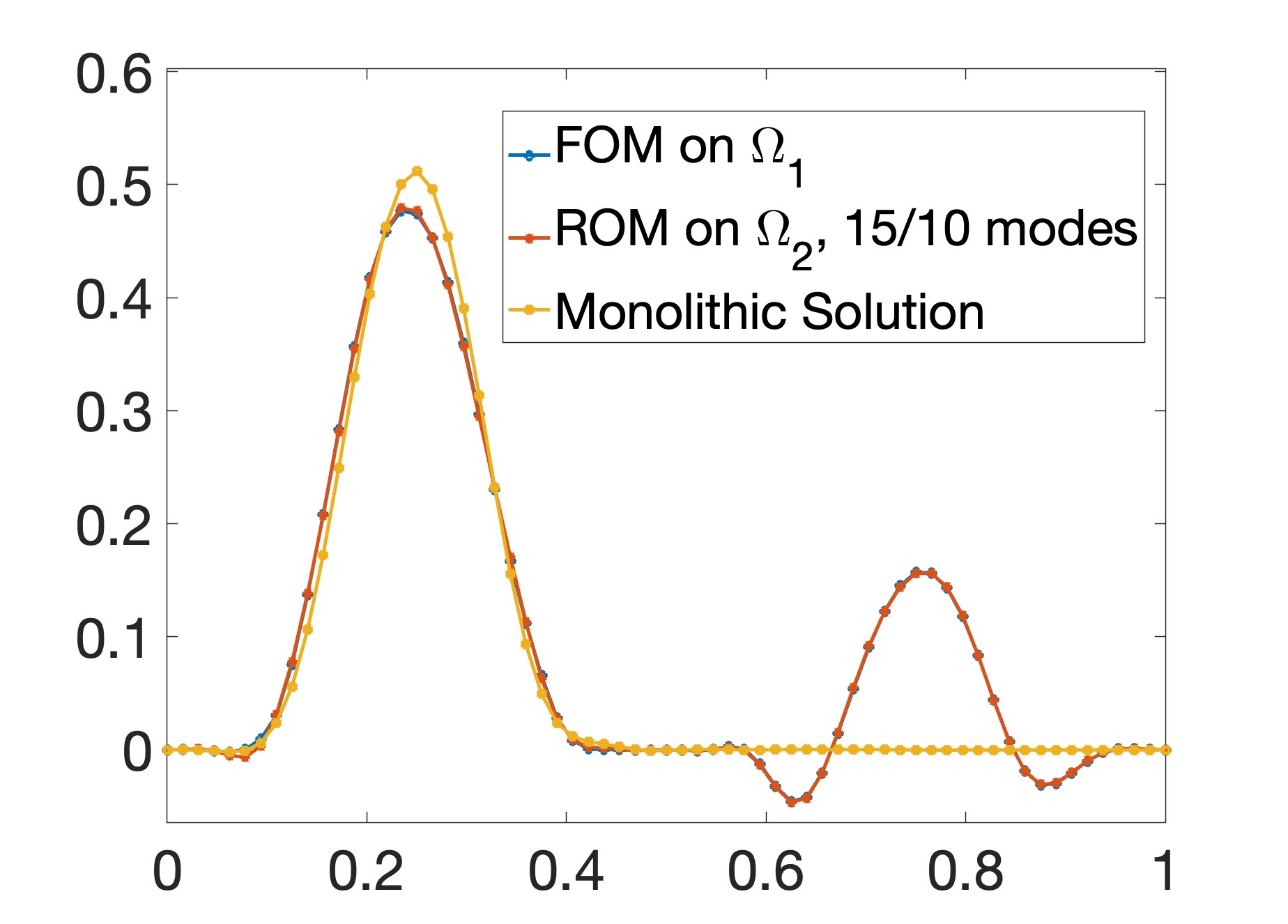}\label{AdC:fig:FRfLM_Inter_15_10}} 
\hfill
\subfigure[\REV{FR-rLM, 15/10 modes}]{\includegraphics[scale=.078]{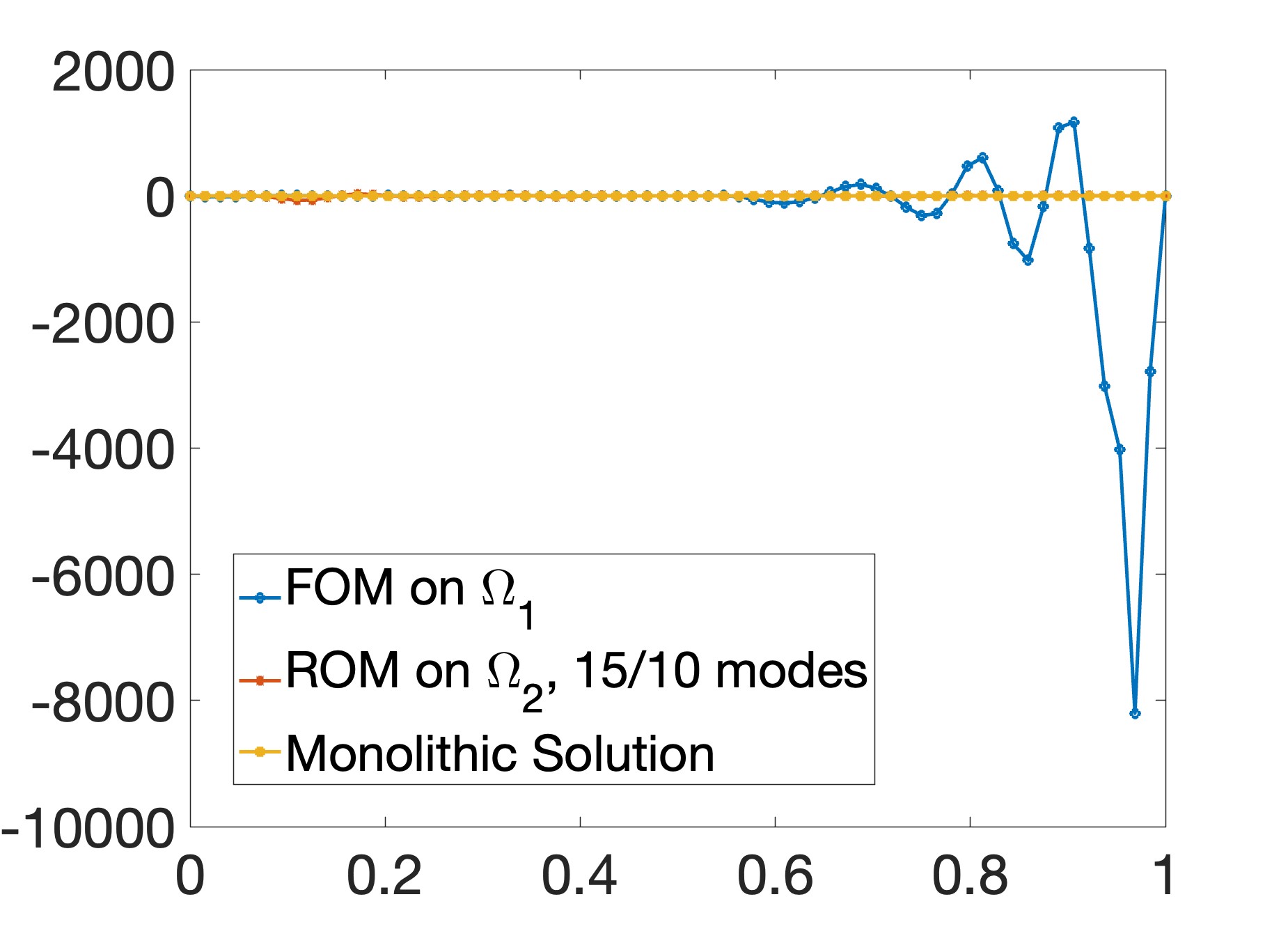}\label{AdC:fig:FRrLM_Inter_15_10}} 
\subfigure[RR-rLM 60/40 modes]{\includegraphics[scale=.15]{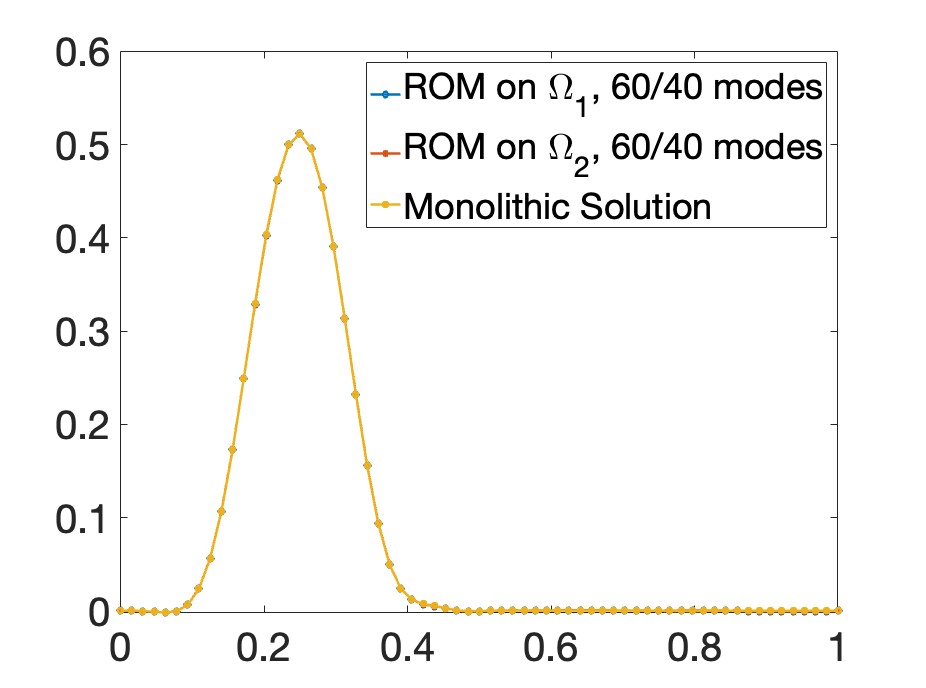}\label{AdC:fig:RR_Inter_60_40}} 
\hfill
\subfigure[FR-fLM 60/40 modes]{\includegraphics[scale=.15]{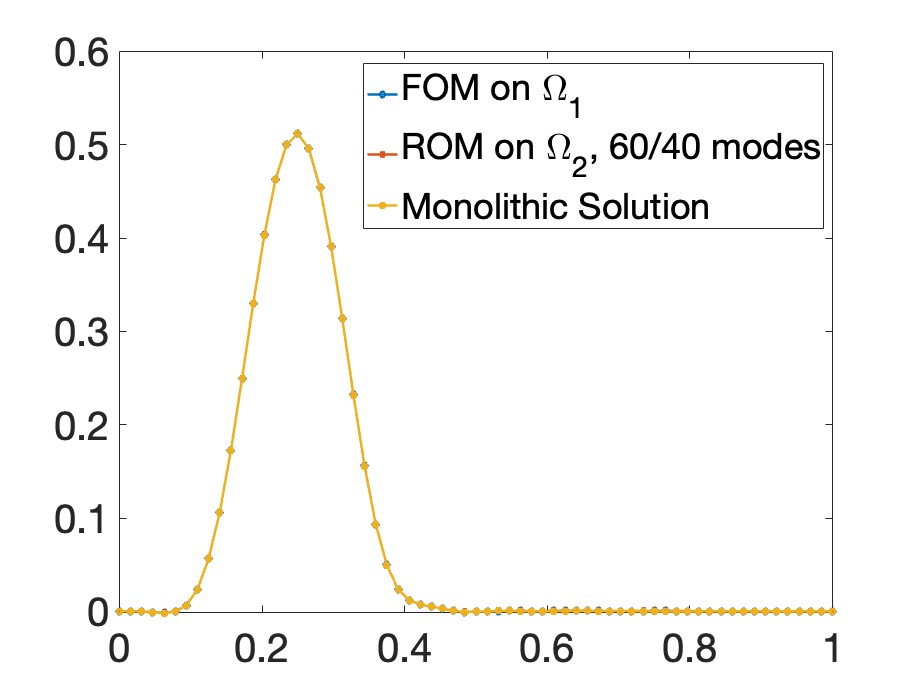}\label{AdC:fig:FRfLM_Inter_60_40}}
\hfill
 \subfigure[\REV{FR-rLM 60/40 modes}]{\includegraphics[scale=.078]{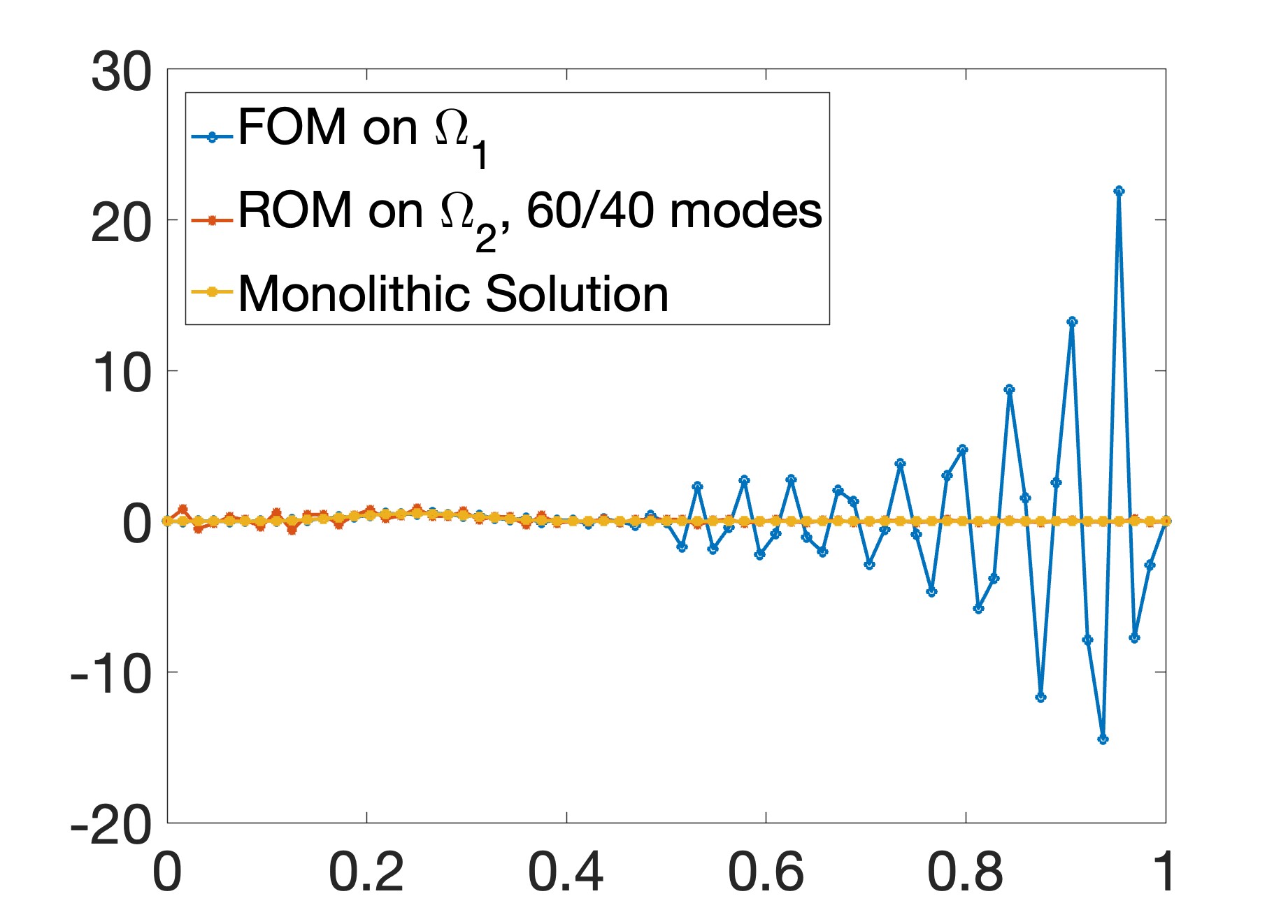}\label{AdC:fig:FRrLM_Inter_60_40}} 
\subfigure[\REV{RR-rLM 90/60 modes}]{\includegraphics[scale=.078]{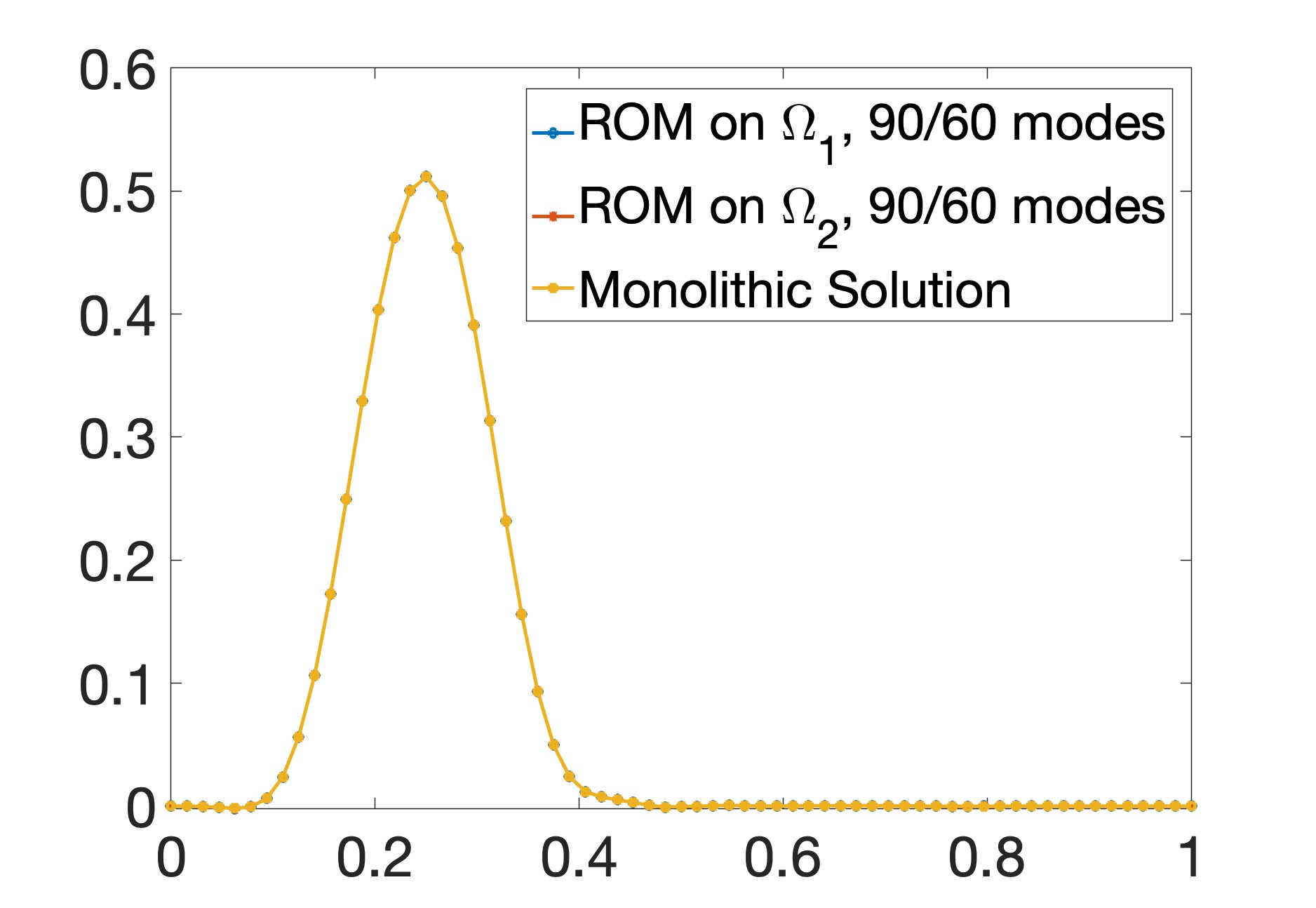}\label{AdC:fig:RRrLM_Inter_90_60}} 
\hfill
\subfigure[\REV{FR-fLM 90/60 modes}]{\includegraphics[scale=.078]{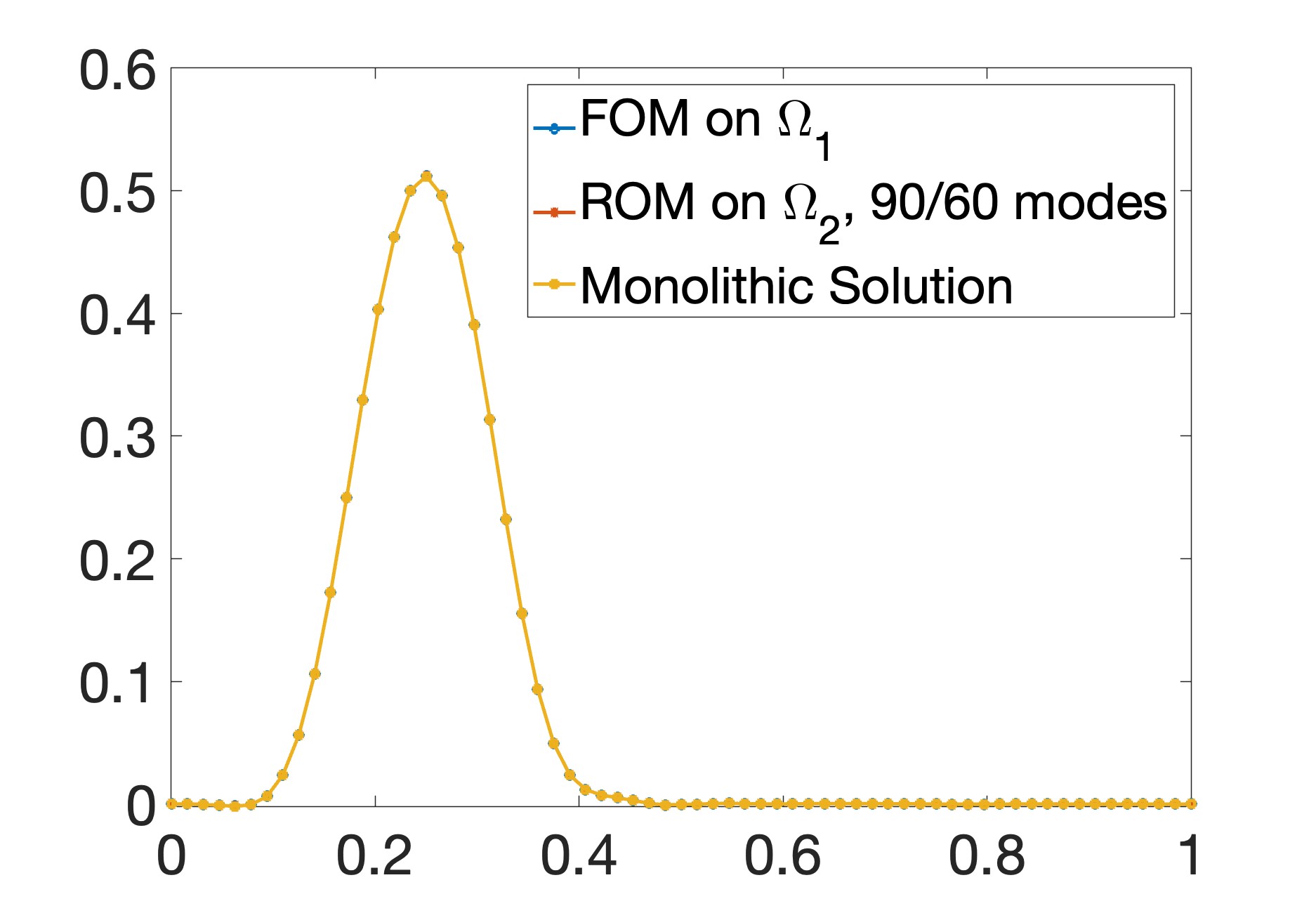}\label{AdC:fig:FRfLM_Inter_90_60}} 
\hfill
 \subfigure[FR-rLM 90/60 modes]{\includegraphics[scale=.15]{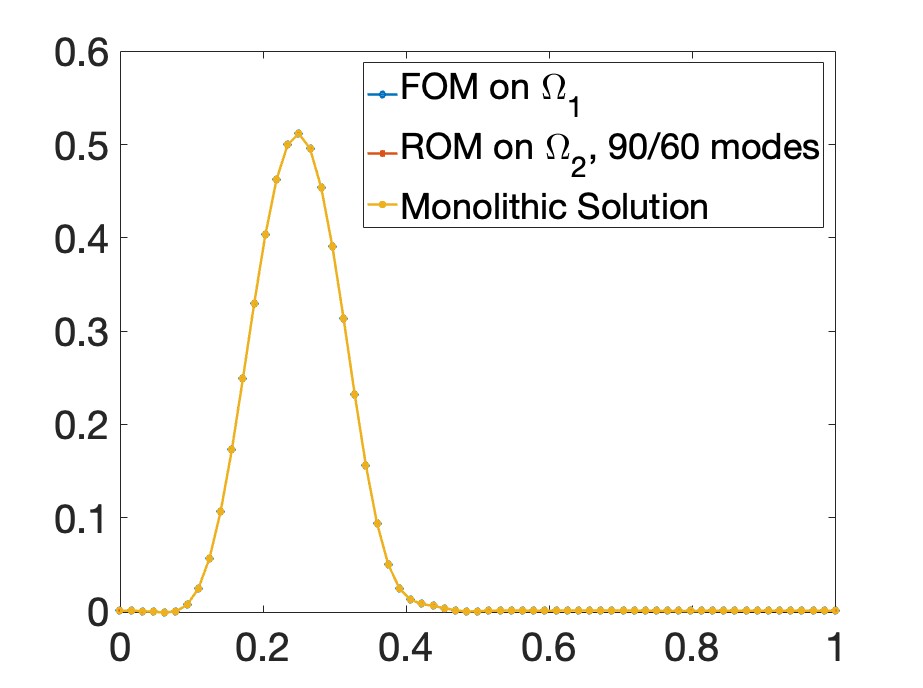}\label{AdC:fig:FRrLM_Inter_90_60}} 
\caption{\REV{Comparison of the interface states at $T_f$ for the partitioned schemes  with provably well-posed Schur complements vs. the single domain (monolithic) solution of the model problem. Reproductive test in the DD setting. The oscillations in the FR-rLM formulations with ``small'' RB sizes are due to accumulation of interface errors during the time integration caused by the approximate enforcement \eqref{eq:inequality} of the coupling condition. The legend ``$m/n$ modes'' corresponds to $m$ interior and $n$ interface modes.}}\label{AdC:fig:InterPlots}
\end{figure}

To understand the root cause for this behavior, recall that trace-compatibility requires every element of the Lagrange multiplier space to have a corresponding subdomain state whose trace on the interface matches the multiplier. This property is essential for the construction of the operator $\mathcal{Q}$ that plays a key role in showing that the Schur complement is non-singular; see Remark \ref{lem:trace}. 
At the same time, it is clear that when the subdomain states are represented by a reduced basis, their traces will not be able to reproduce every possible element of the  full interface space $G^h_2$, i.e., the latter is not trace-compatible.  As the size of the reduced basis for the states increases, trace compatibility is restored and the condition number of the Schur complement is reduced. This inflection point is clearly visible in Figure \ref{AdC:fig:cond_RRfLM_reprod} and corresponds to the instance when the traces of the reduced basis states contain the Lagrange multiplier space. 
An algebraic explanation of this behavior is that when the subdomain states are represented by a reduced basis, the full size Lagrange multiplier space will over-constrain the states leading to nearly rank deficient, or rank-deficient transpose constraint matrices. 

\begin{remark}
Numerical studies in \cite{DeCastro_23_INPROC} have shown that  the partitioned solution of the coupled ROM-ROM problem implemented with the full LM space $G^h_2$ has, to machine precision, the same errors as the partitioned solution of the this problem implemented with the reduced interface LM space $\REV{\Phi}_{i,\gamma}$. This suggests that solution errors alone do not tell the whole story about the quality of the Schur complement underpinning these partitioned solutions. Using Lagrange multiplier spaces that violate our analysis may result in seemingly reasonable errors for specific instances of discretization and ROM parameters, but is not guaranteed to work across all possible regimes. 
\end{remark}

Finally, in Figure \ref{AdC:fig:InterPlots}, we examine how well the partitioned solutions of the provably well-posed coupled problems satisfy the interface condition, which is an important characteristic of any partitioned scheme. To that end, we 
compare the partitioned solutions with the single domain solution of the model problem on the interface $\gamma$.
The plots in Figure \ref{AdC:fig:InterPlots} reveal that, for a large enough composite reduced basis, partitioned solutions of the coupled ROM-ROM and FOM-ROM problems have essentially the same accuracy on the interface as the solution of the coupled FOM-FOM problem (FF-fLM). We note that the FOM-ROM coupling with reduced LM space does require a larger basis size for the same accuracy as the FOM-ROM with full LM space, but it is capable of attaining the same level of errors. \REV{In particular, as discussed earlier, the oscillations in the FR-rLM formulations with ``small'' RB sizes are due to accumulation of interface errors during the time integration caused by the approximate enforcement \eqref{eq:inequality} of the coupling condition.}

\subsubsection{Predictive results}
We recall that for the predictive tests we collect snapshots at two diffusion coefficients, $\kappa_i = 10^{-2}$ and $10^{-8}$, and compute the partitioned solutions with $\kappa_i = 10^{-5}$ for $i=1,2$. 
\REV{We collect a total of 10,594 solution snapshots for $\kappa_i = 10^{-2}$ and $\kappa_i = 10^{-8}$ using the time steps stated at the beginning of this section.} Again, we note that the singular values decay rapidly, so that we are able to capture most of the snapshot energy within a much smaller subset of modes, 
as shown in Figure \ref{AdC:fig:snapEnergyPred}.  These plots reveal that only $d_{1,0}=26$, $d_{2,0} = 22$, and $d_{i,\gamma}=7$ interior and interface modes are sufficient to capture 99\% of the energy in $X_{i,0}$ and $X_{i,\gamma}$. Setting  $d_{1,0}=67$, $d_{2,0} = 58$, and $d_{i,\gamma}=20$ captures 99.999\% of the snapshot energies.  This is approximately the same rate of decay as shown in the reproductive case.
\begin{figure}[h]
  \begin{center}
   \includegraphics[width=0.45\textwidth]{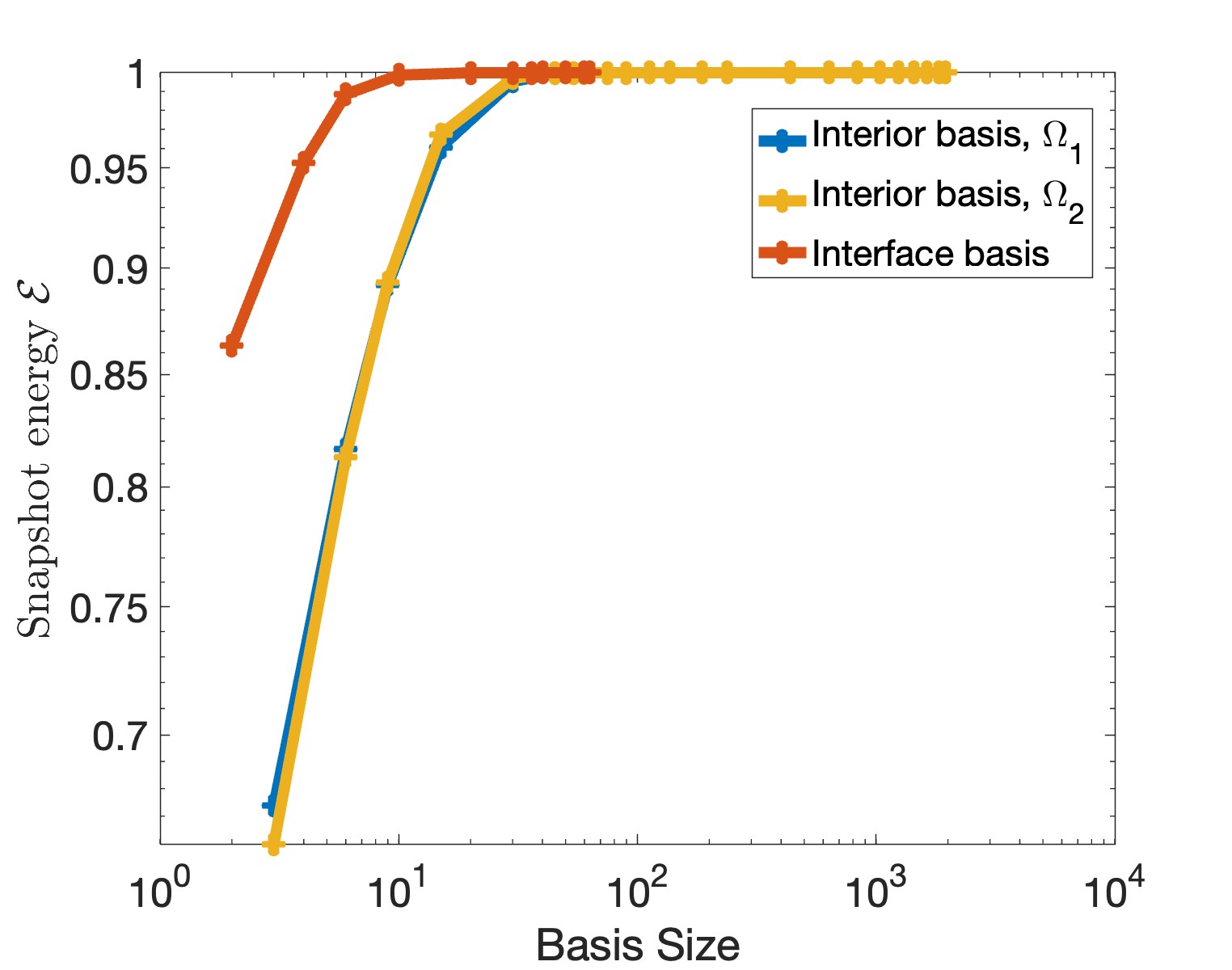}
  \end{center}
  \vspace{-2ex}
\caption{Snapshot energy \eqref{AdC:eq:snapEnergy} as a function \REV{of interior $\REV{\Phi}_{i,0}$ and interface $\REV{\Phi}_{i,\gamma}$ basis sizes} in the predictive regime.} \label{AdC:fig:snapEnergyPred}
\end{figure}

We first consider the relative errors \eqref{AdC:eq:relError} of the partitioned solutions of the coupled ROM-ROM and FOM-ROM problems. Figures  \REV{\ref{AdC:fig:L2errPred}--\ref{AdC:fig:L2errVsTime_BasisSizesPred}} summarizes our results.  In all three cases we see that, for a sufficiently large composite reduced basis size, partitioned solutions are able to achieve relative errors of roughly $10^{-2} $ or $10^{-3}$. This confirms the ability of the partitioned schemes presented in this paper to simulate the model problem for parameter values that have not been used in the construction of the reduced basis. 

\REV{\REVpk{In the predictive case, as previously in the reproductive case,} the approximate satisfaction of the coupling condition \eqref{eq:inequality} leads to accumulation of errors during the time integration that eventually destroys the accuracy of the solution for ``small'' RB sizes. As a result, as in the reproductive test, the relative error at the final time for the FR-rLM formulation  is significantly larger than that for the other formulations.  The plots in Figure \ref{AdC:fig:L2errVsTime_BasisSizesPred} reveal that, also similar to the reproductive case,  all formulations have comparable errors up to time $t \approx 2$, and that the error buildup for the FR-rLM formulation begins after that time.}

We next examine the condition numbers of the Schur complement matrices involved in the predictive tests. Again, we compare and contrast the conditioning of these matrices for \REV{the couplings that satisfy the trace compatibility condition and the} RR-fLM coupling that satisfies this condition only for large enough size of the reduced basis. These results are summarized in Figure \ref{AdC:fig:condSPred}. The plots in this figure mirror the behavior of the condition number observed in the reproductive test. Thus, one can conclude that the theoretical results in Section \ref{sec:analysis} remain in full force in the predictive regime as well, confirming the need for trace-compatible Lagrange multiplier spaces when constructing the coupled ROM-ROM and FOM-ROM problems.
\begin{figure}[t!]
  \begin{center}
    \includegraphics[width=0.5\textwidth]{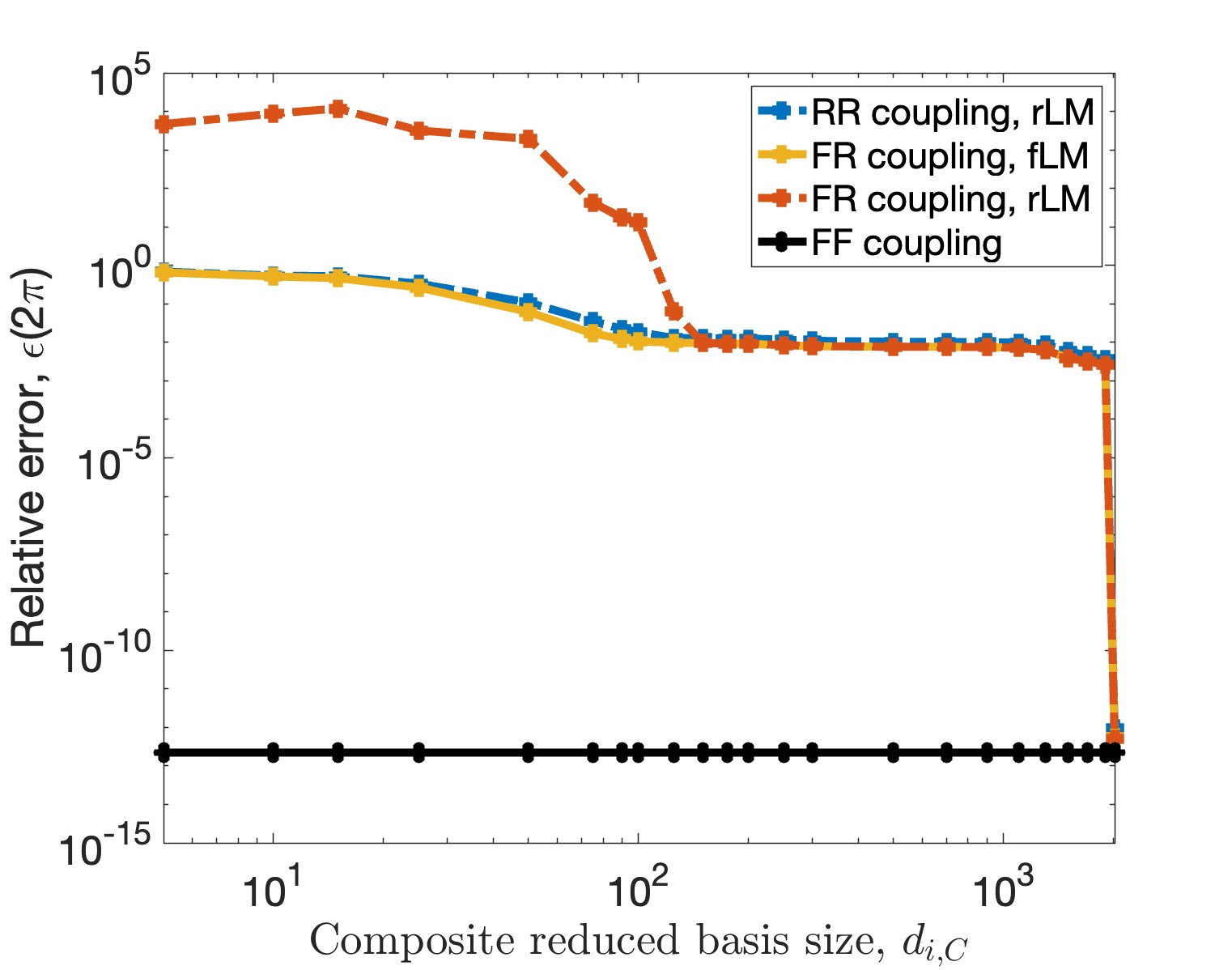}
  \end{center}
  \vspace{-2ex}
\caption{Relative error \eqref{AdC:eq:relError} at \REV{the final time $T_f=2\pi$} of the partitioned solution for each coupled formulation as a function of the composite reduced basis size $d_{i,C}=d_{i,0}+d_{i,\gamma}$  in the predictive regime.} \label{AdC:fig:L2errPred}
\end{figure}
\begin{figure}[t!]
\centering
\subfigure[$d_{i,0} = 15, d_{i,\gamma} = 10$]{\includegraphics[scale=.10]{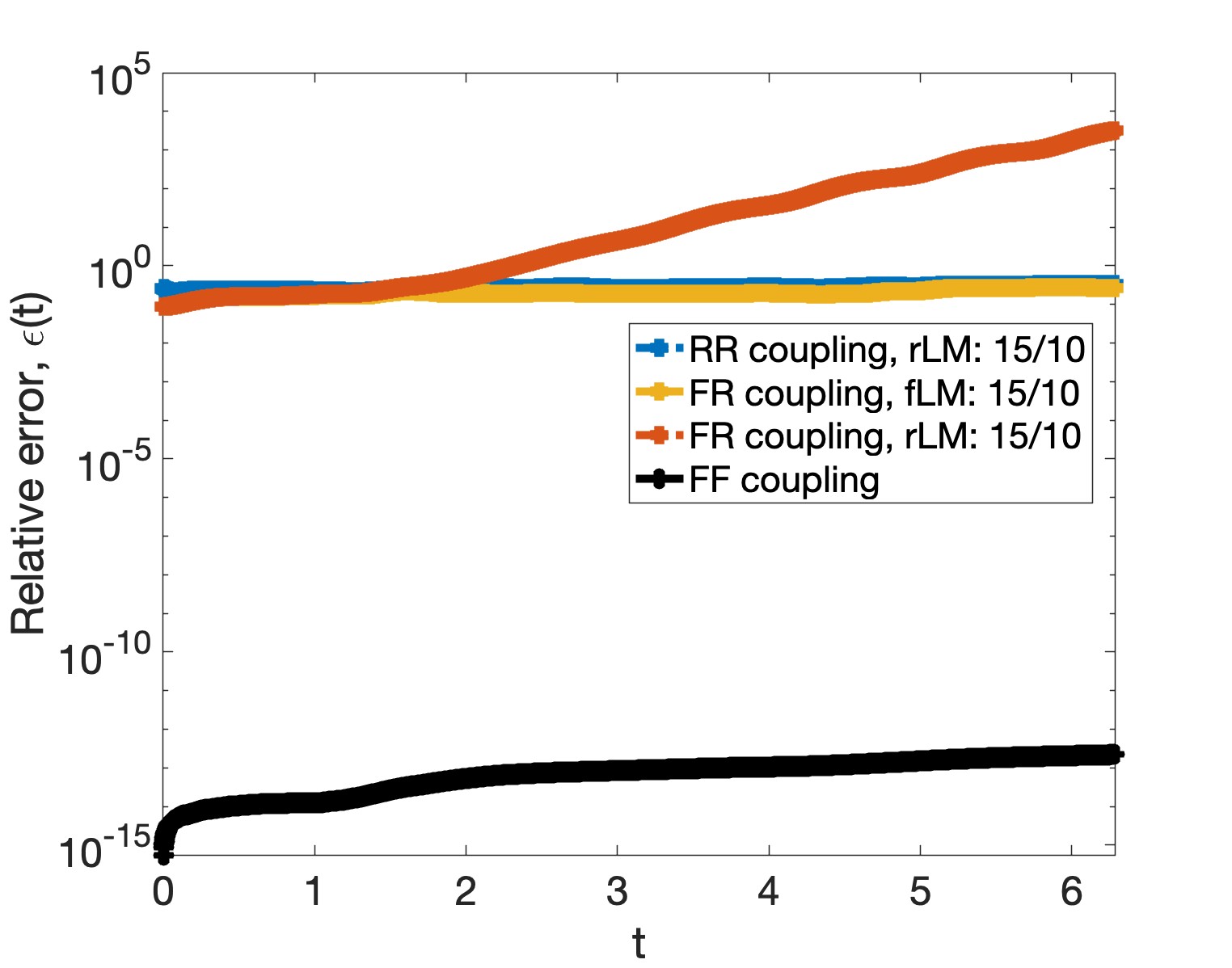}\label{AdC:fig:errVTime1510Pred}} 
\subfigure[$d_{i,0} = 60, d_{i,\gamma} = 40$]{\includegraphics[scale=.10]{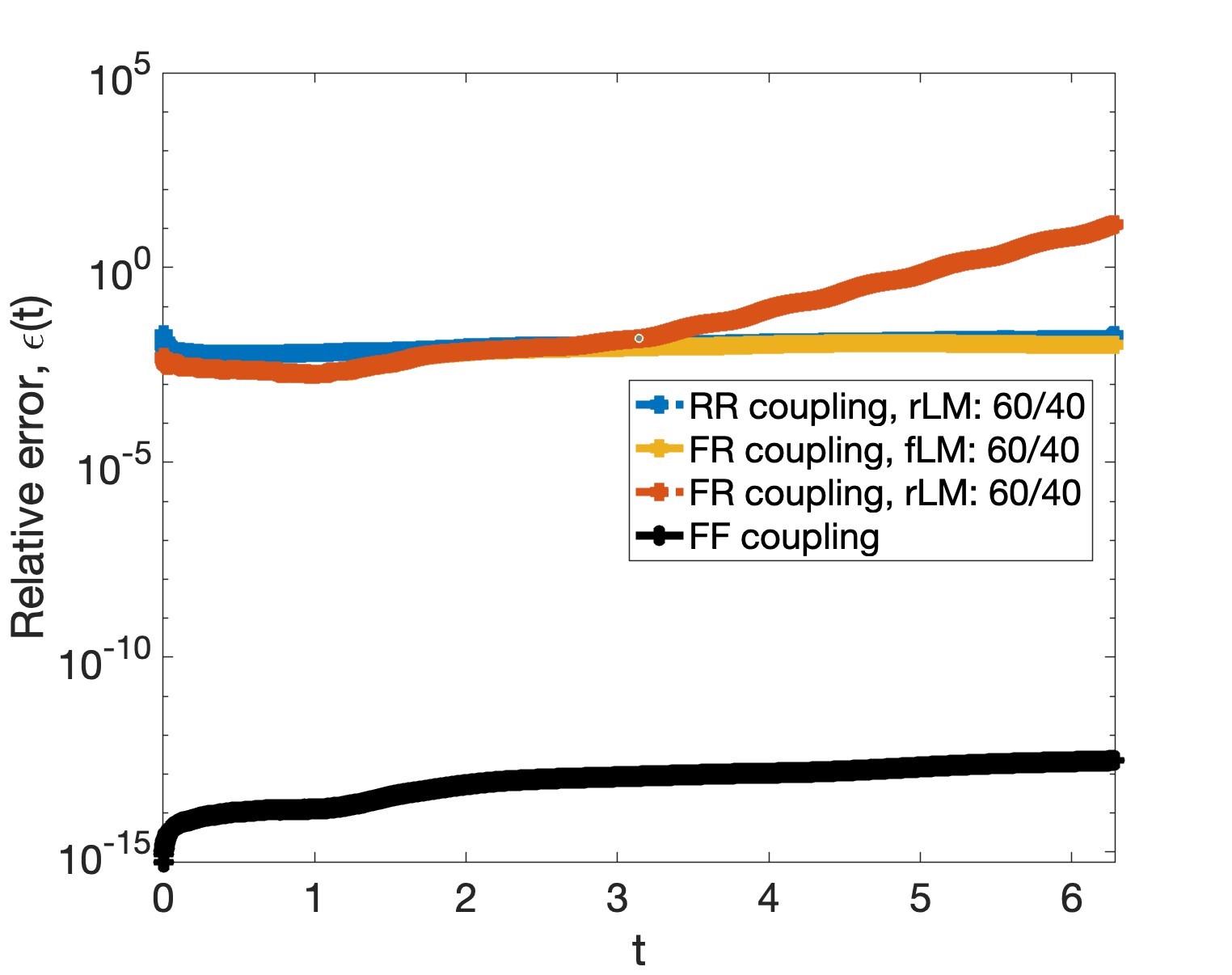}\label{AdC:fig:errVTime6040Pred}} 
\subfigure[$d_{i,0} = 90, d_{i,\gamma} = 60$]{\includegraphics[scale=.10]{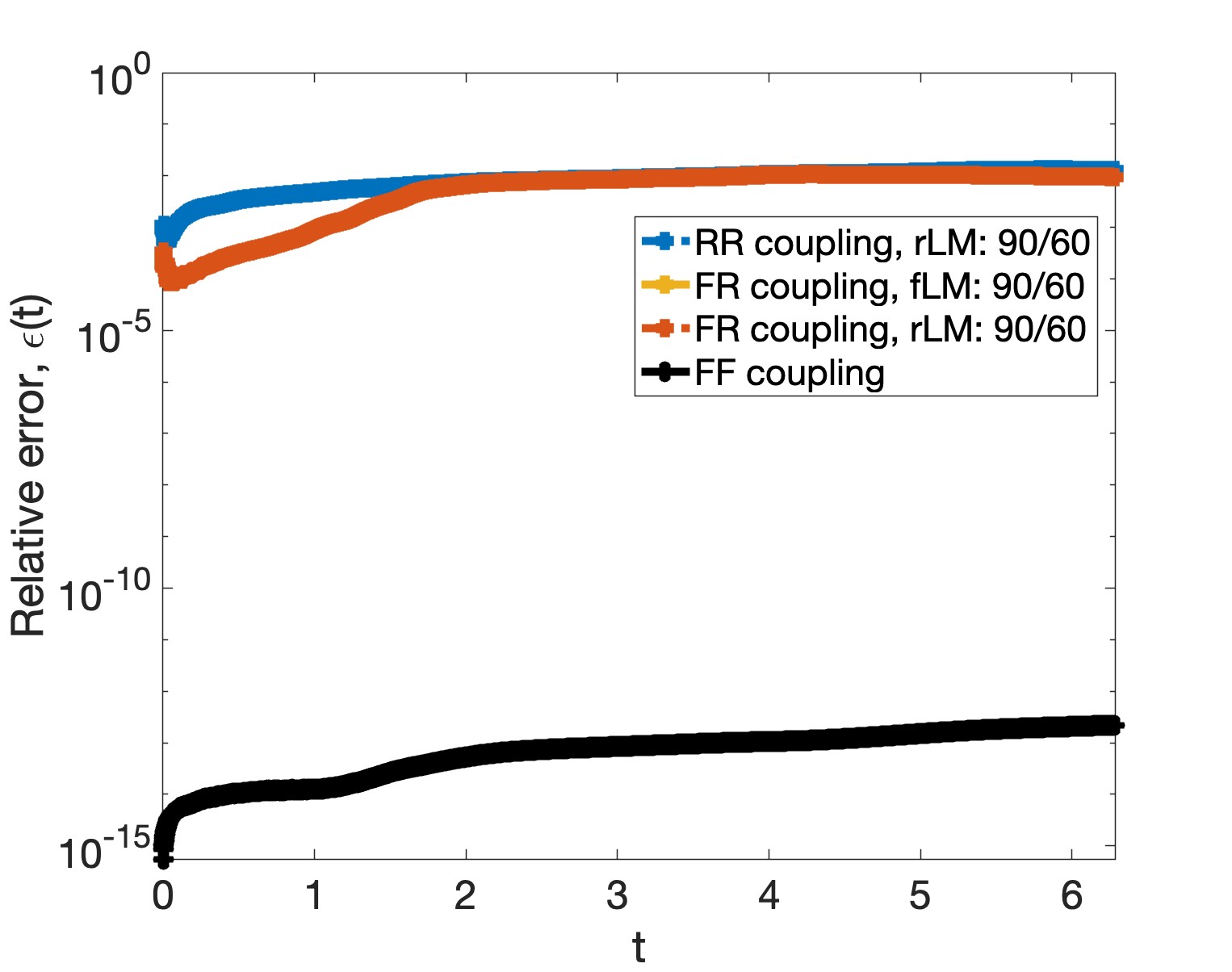}\label{AdC:fig:errVTime9060Pred}} 
\subfigure[$d_{i,0} = 237, d_{i,\gamma} = 63$]{\includegraphics[scale=.10]{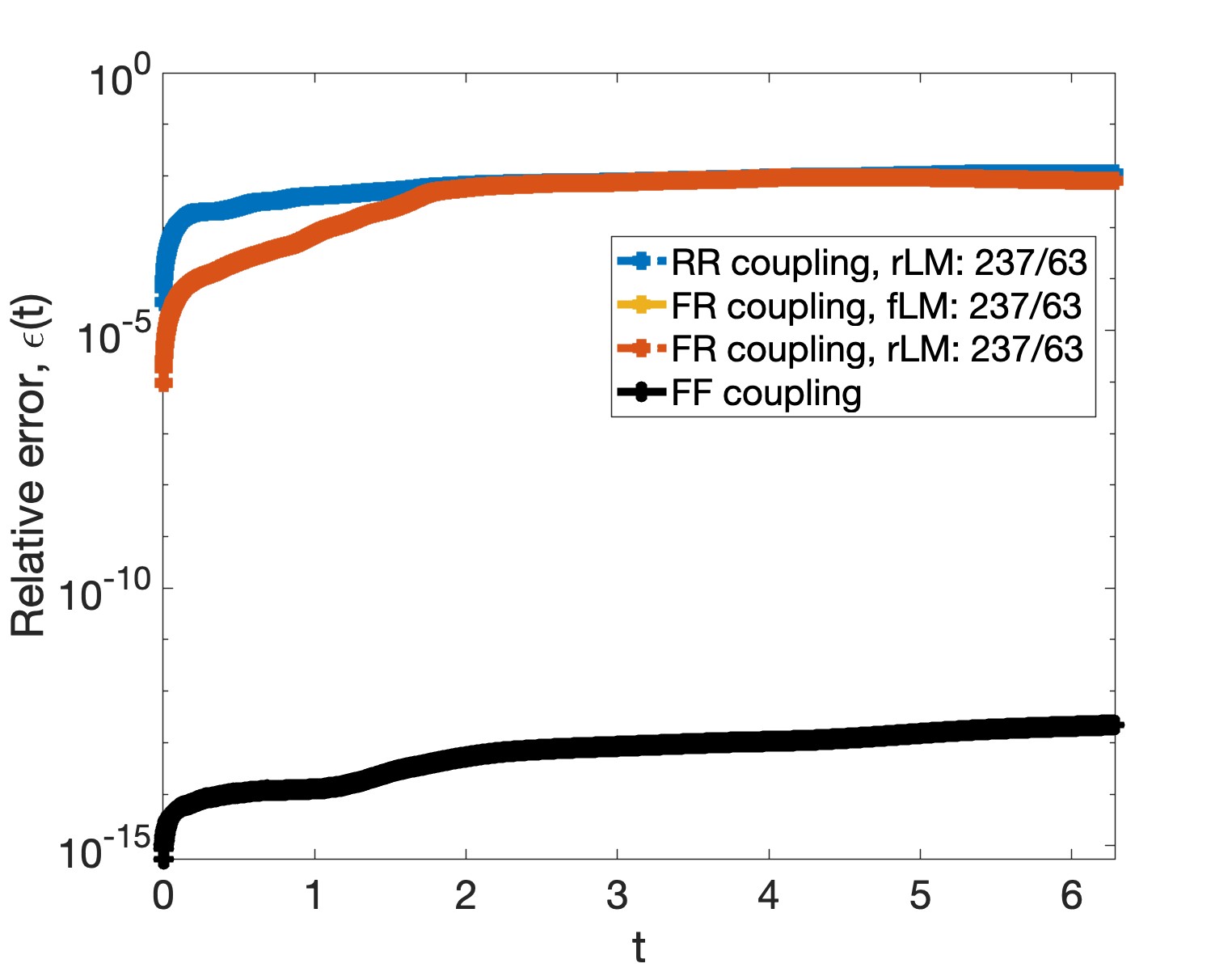}\label{AdC:fig:errVTime23763Pred}} 
\subfigure[$d_{i,0} = 1953, d_{i,\gamma} = 63$]{\includegraphics[scale=.10]{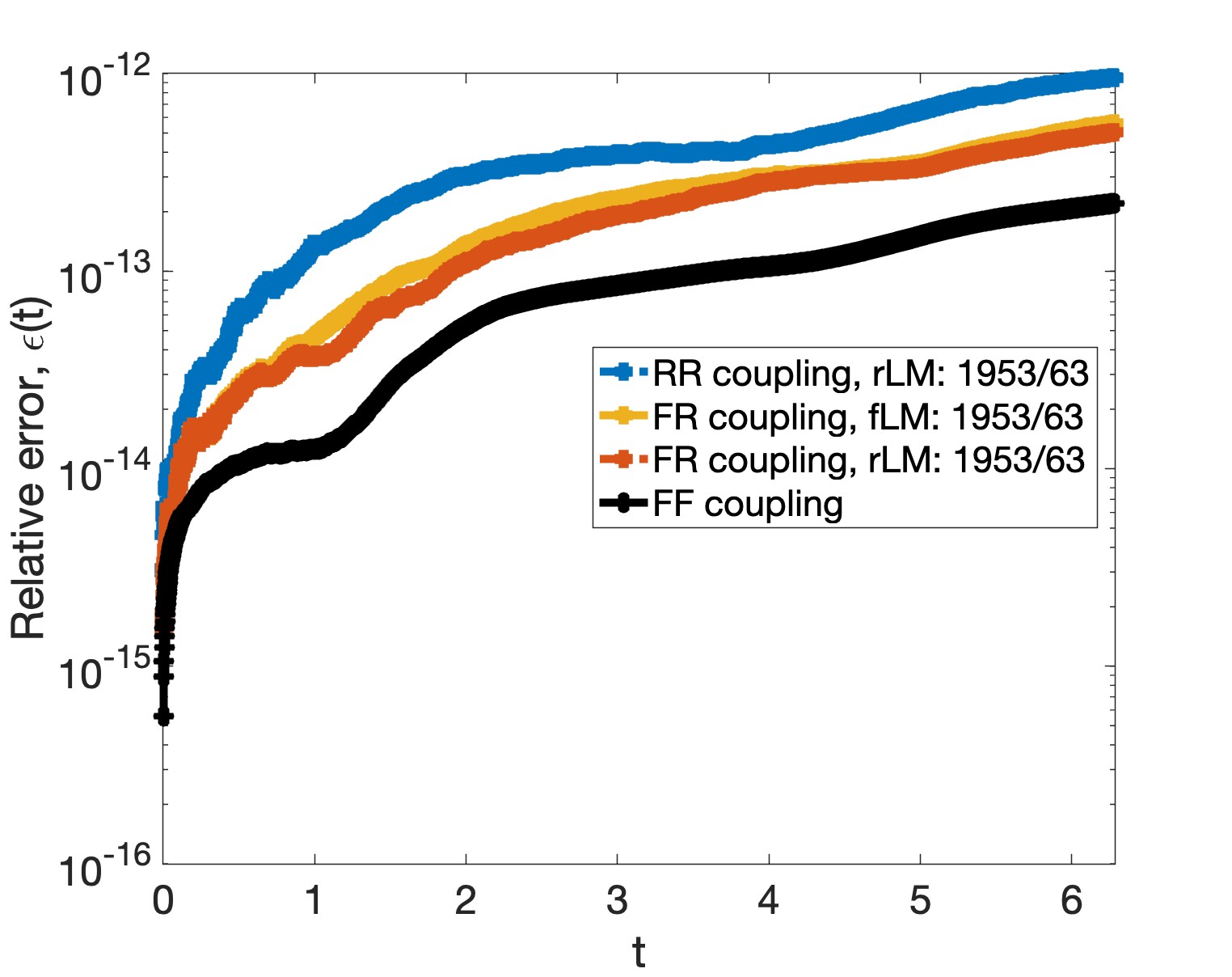}\label{AdC:fig:errVTime195363Pred}} 
\subfigure[Single domain ROM]{\includegraphics[scale=.088]{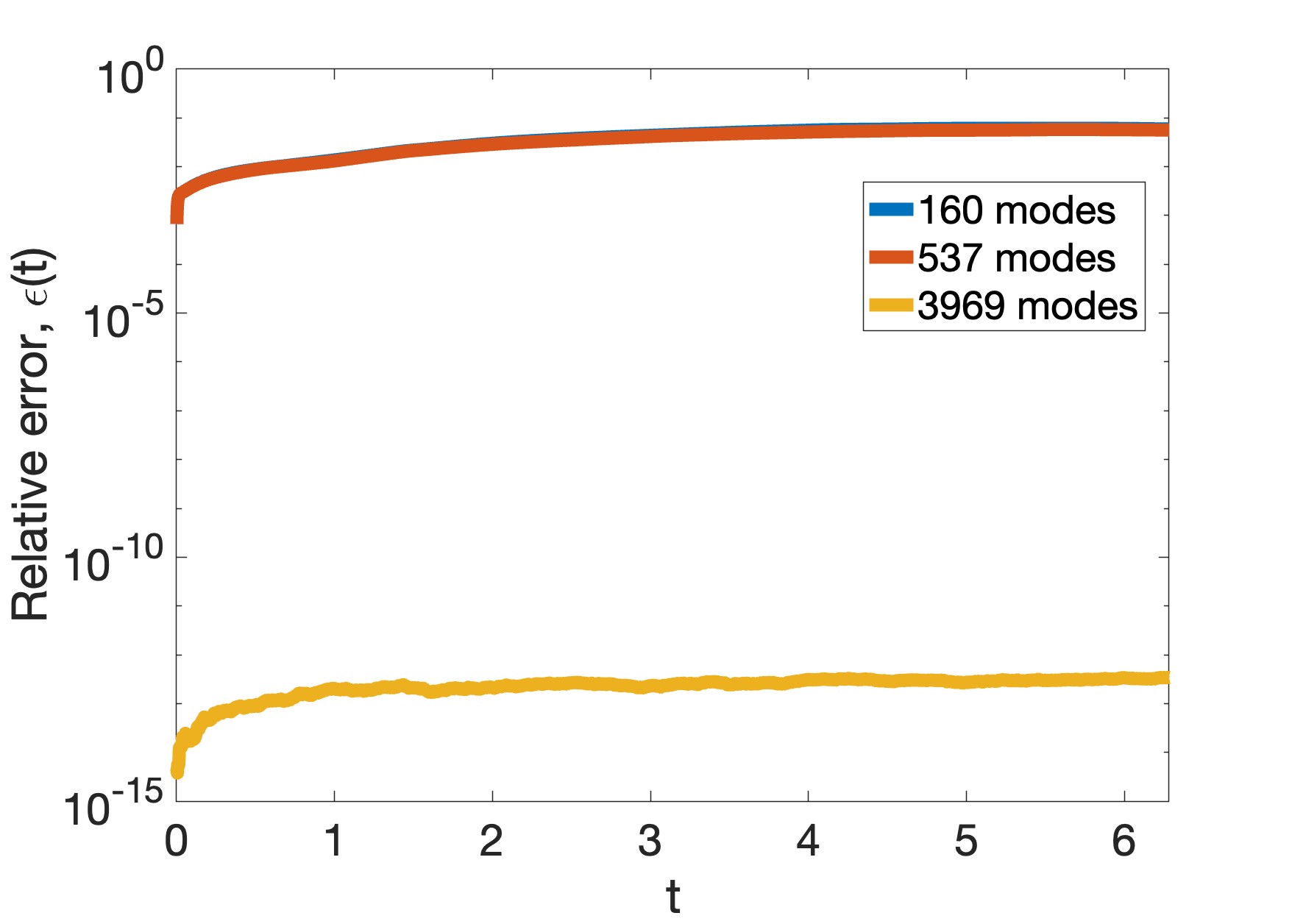}\label{AdC:fig:errVsTimeSingleROM_PredSingleDomain}} 
\caption{\REV{Relative error \eqref{AdC:eq:relError} of the partitioned solution for select basis sizes as a function of time in the predictive regime. The error plots for 160 and 537 modes in subfigure (f) are indistinguishable.}}\label{AdC:fig:L2errVsTime_BasisSizesPred}
\end{figure}
\begin{figure}[h!]
\centering
\subfigure[All coupled problems]{\includegraphics[scale=.15]{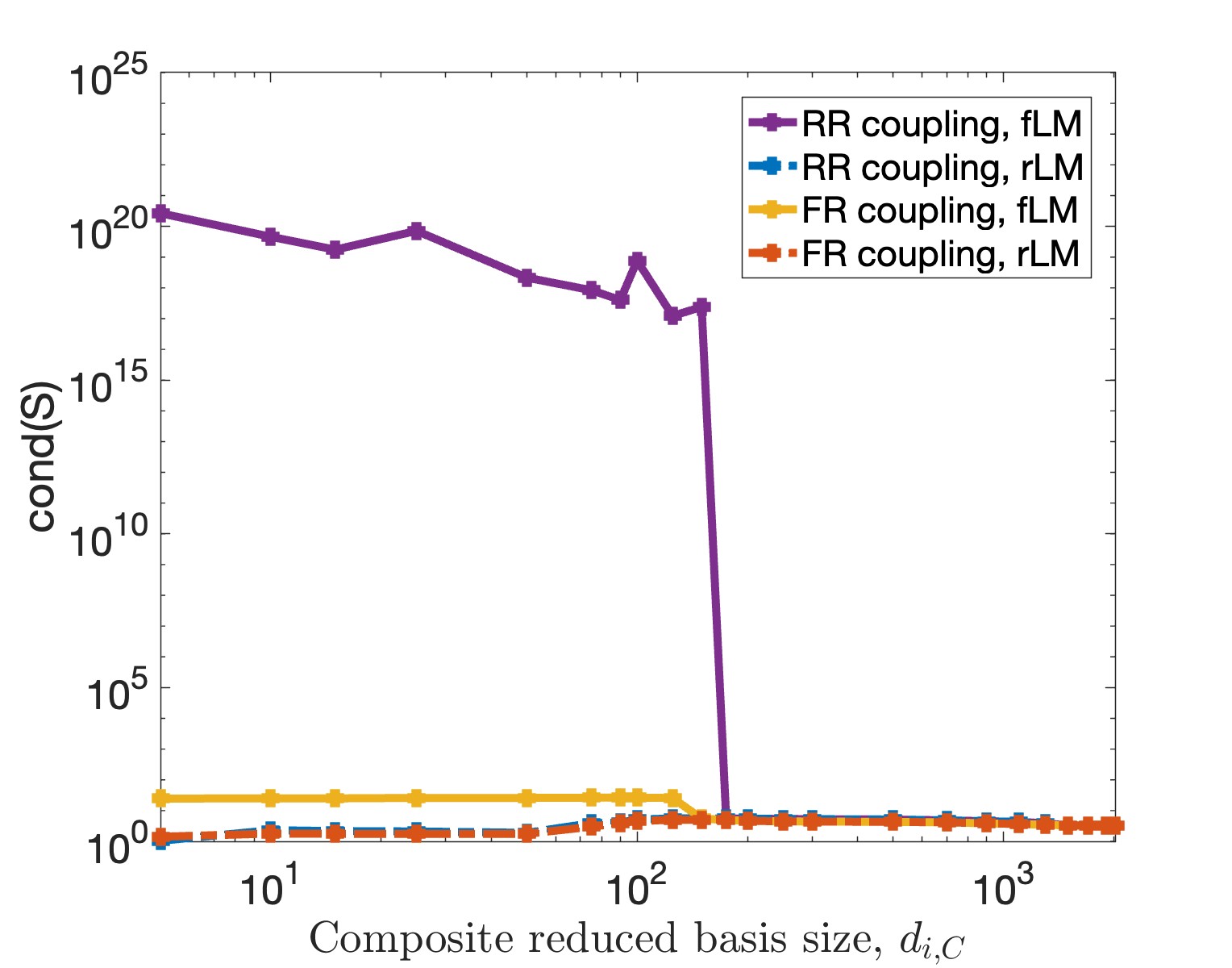}\label{AdC:fig:cond_RRfLM_pred}} 
\subfigure[Provably well-posed Schur complement]{\includegraphics[scale=.15]{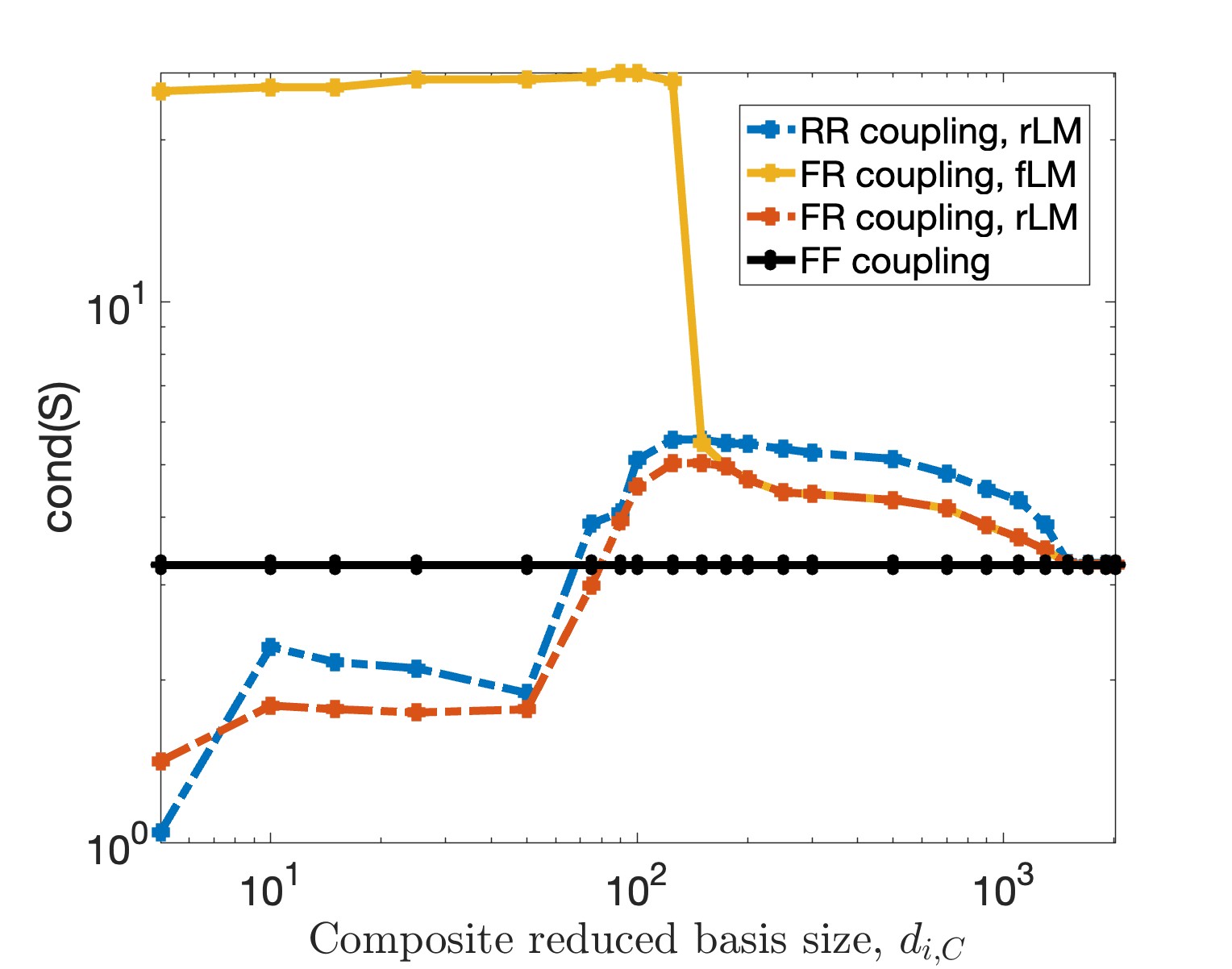}\label{AdC:fig:cond_noRRfLM_pred}}   
\caption{Condition number of Schur complement matrix for each coupled formulation as a function of the composite reduced basis size $d_{i,C}=d_{i,0}+d_{i,\gamma}$ size  in the predictive regime. Subfigure (a) reports results for all methods evaluated, whereas subfigure (b) focuses only 
on methods with provably well-posed Schur complements.}\label{AdC:fig:condSPred}
\end{figure}

We conclude with results showing how well the partitioned solutions satisfy the interface condition in the predictive tests. As in the reproductive test, we focus only on the well-posed coupled formulations, and compare the interface states of the partitioned and single domain solutions. These results are summarized  in Figure \ref{AdC:fig:InterPlots_Pred}.
We again see that partitioned solutions of the couplings with provably well-posed Schur complements are able to satisfy the interface condition. The coupled ROM-ROM and FOM-ROM problems display agreement with the single domain solution on the interface. \REV{Not surprisingly, we see that, as in the reproductive test, the FR-rLM formulation requires a larger basis set to achieve the same accuracy as the other formulations. In particular, for ``small'' RB sizes we see the same oscillatory behavior at the final time, which is caused by the approximate enforcement \eqref{eq:inequality} of the coupling condition and the subsequent accumulation of errors during the time integration.}

\begin{figure}[!ht]
\centering
\subfigure[RR-rLM, 15/10 modes]{\includegraphics[scale=.08]{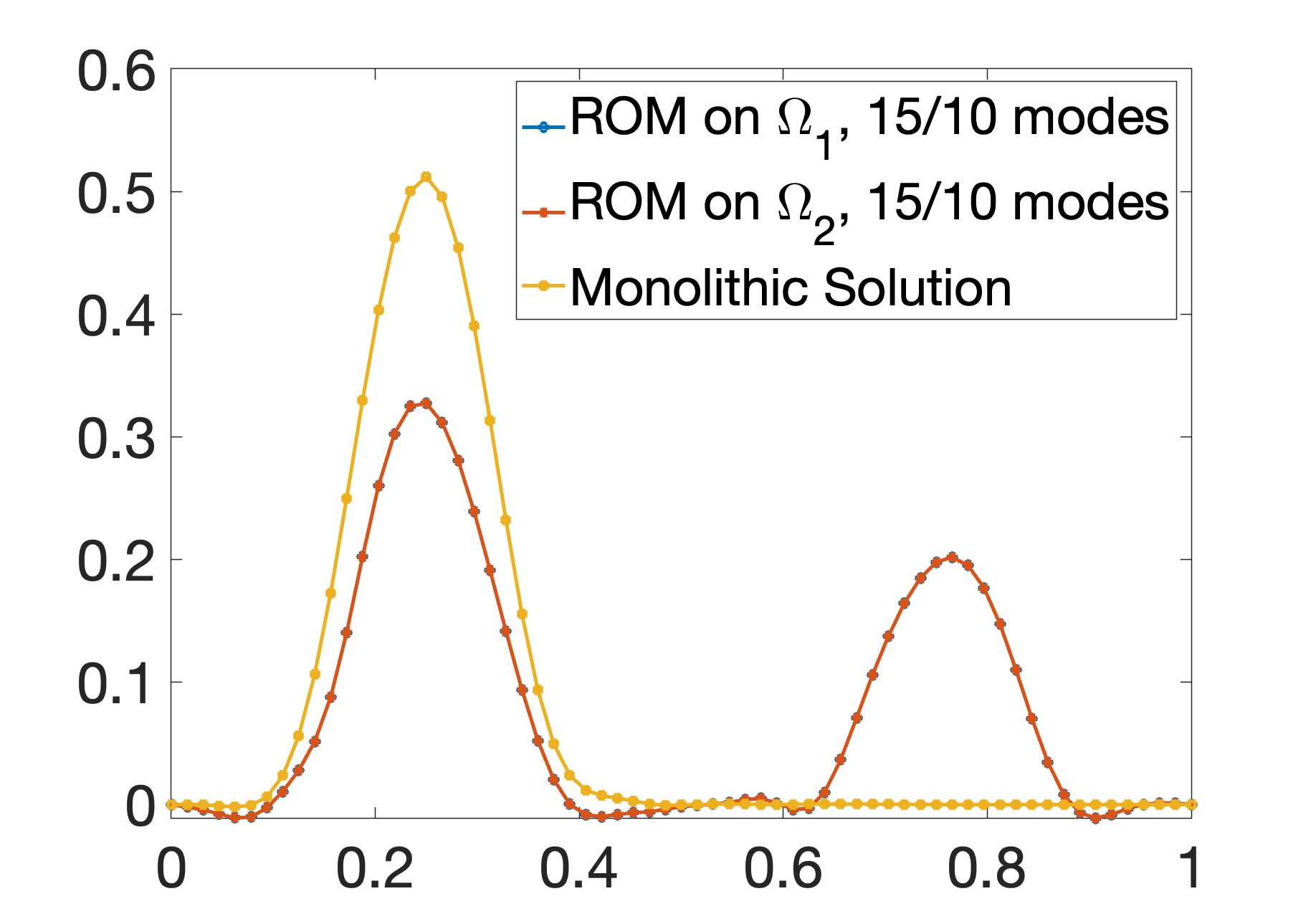}\label{AdC:fig:RR_Inter_15_10_pred}} 
\hfill
\subfigure[\REV{FR-fLM, 15/10 modes}]{\includegraphics[scale=.08]{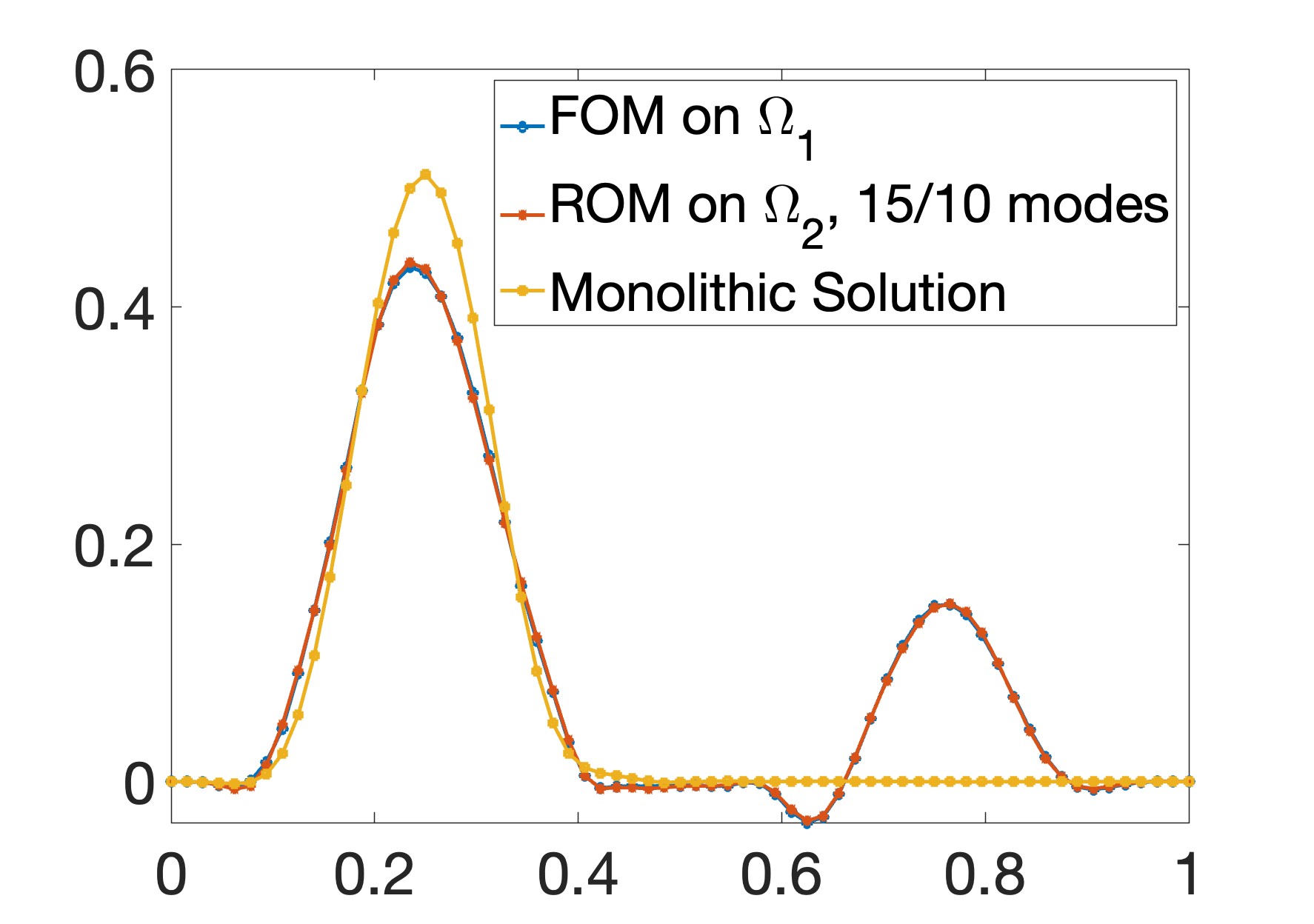}\label{AdC:fig:FRfLM_Inter_15_10_pred}} 
\hfill
\subfigure[\REV{FR-rLM, 15/10 modes}]{\includegraphics[scale=.085]{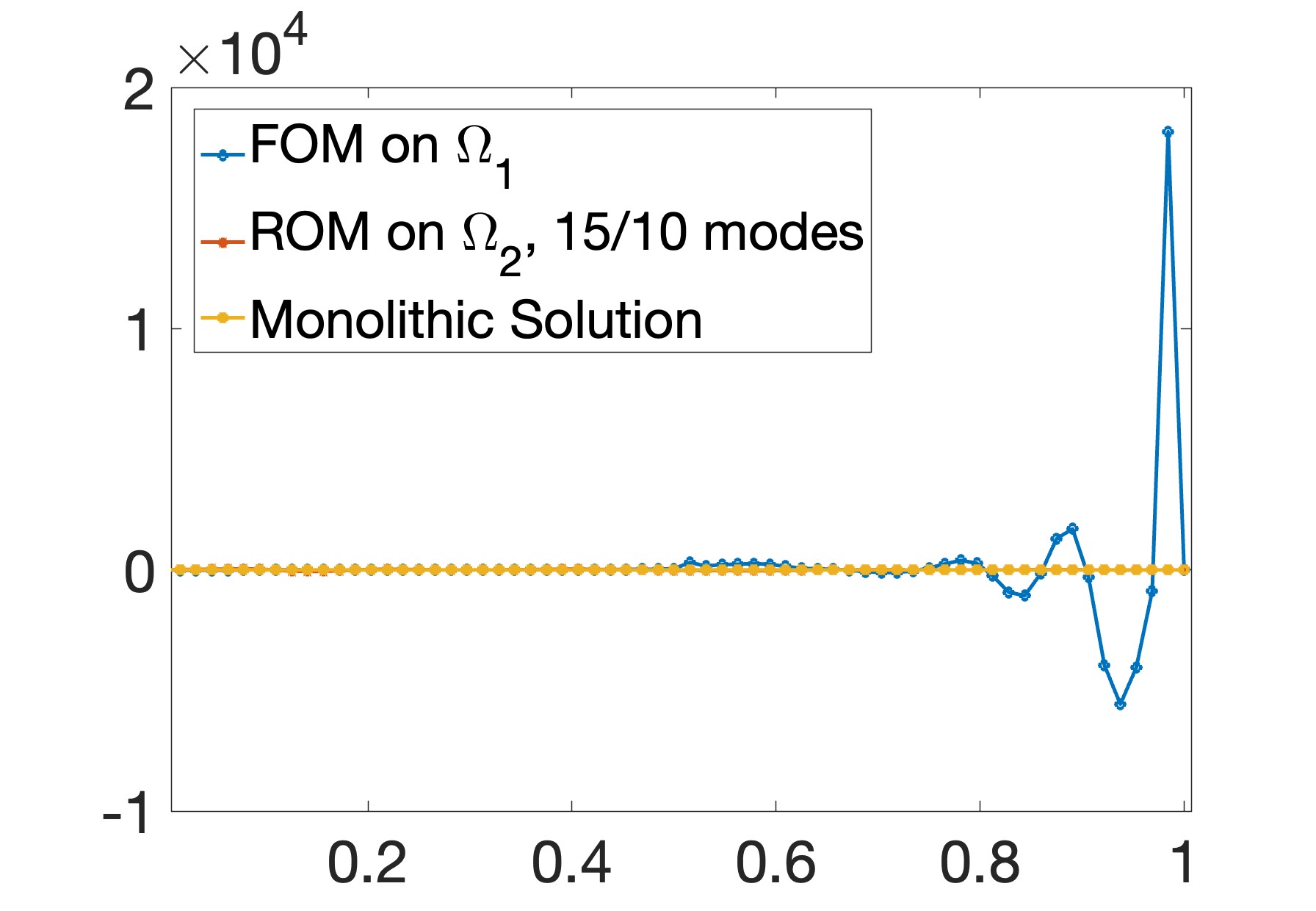}\label{AdC:fig:FRrLM_Inter_15_10_pred}} 
\subfigure[RR-rLM 60/40 modes]{\includegraphics[scale=.08]{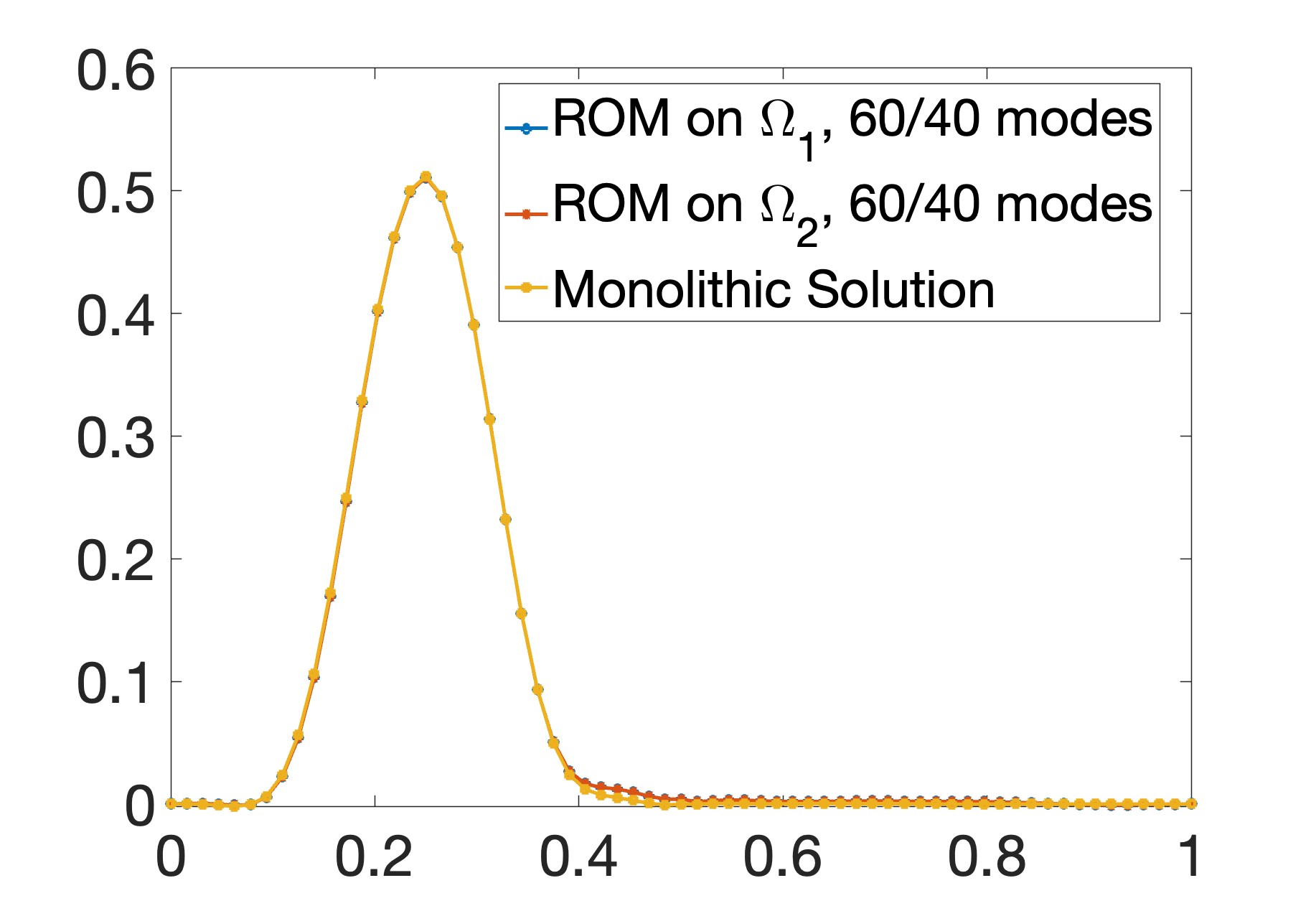}\label{AdC:fig:RR_Inter_60_40_pred}} 
\hfill
\subfigure[FR-fLM 60/40 modes]{\includegraphics[scale=.08]{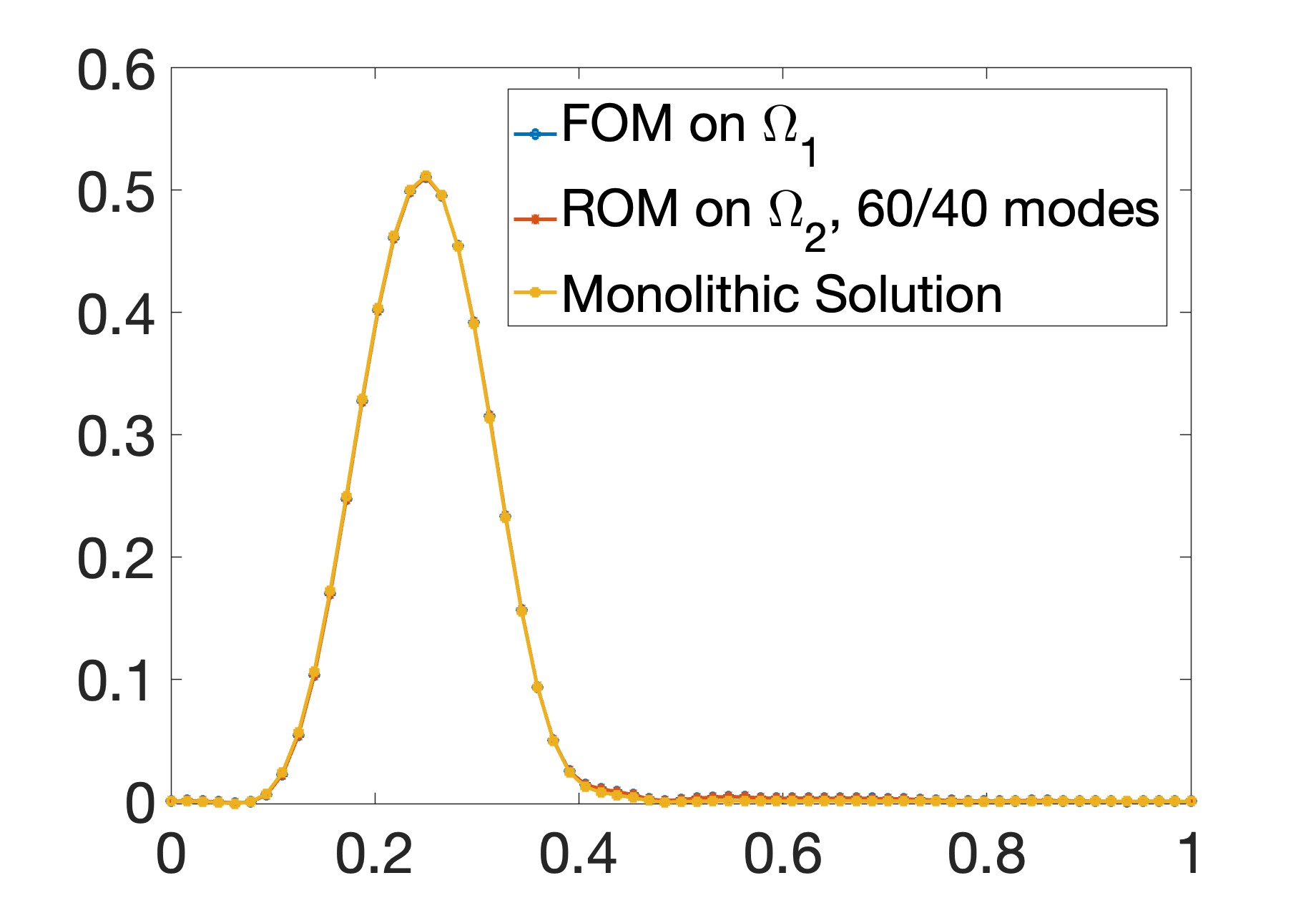}\label{AdC:fig:FRfLM_Inter_60_40_pred}} 
\hfill
\subfigure[\REV{FR-rLM, 60/40 modes}]{\includegraphics[scale=.08]{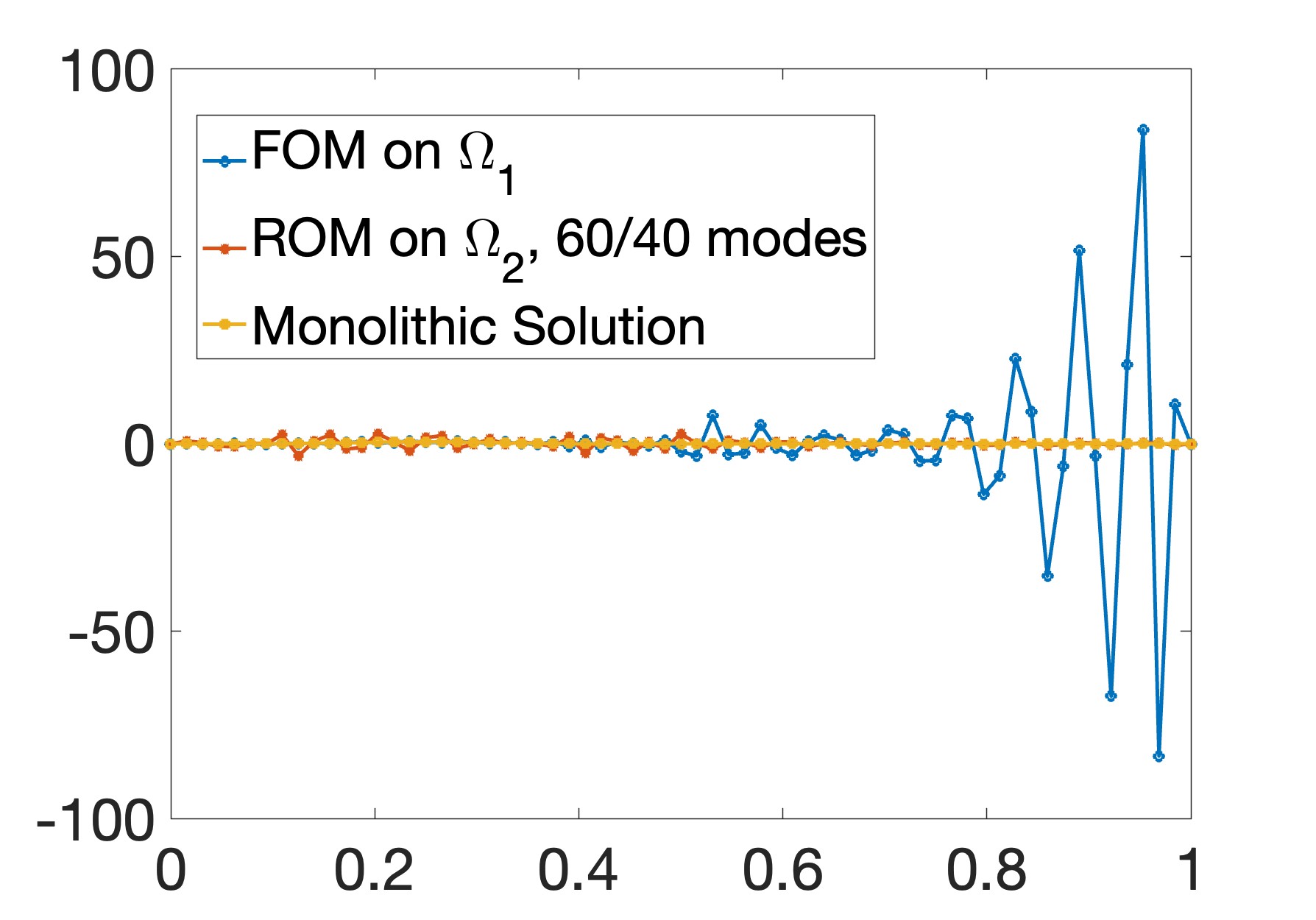}\label{AdC:fig:FRrLM_Inter_60_40_pred}} \\
$ $\hfill
\subfigure[\REV{RR-rLM, 90/60 modes}]{\includegraphics[scale=.08]{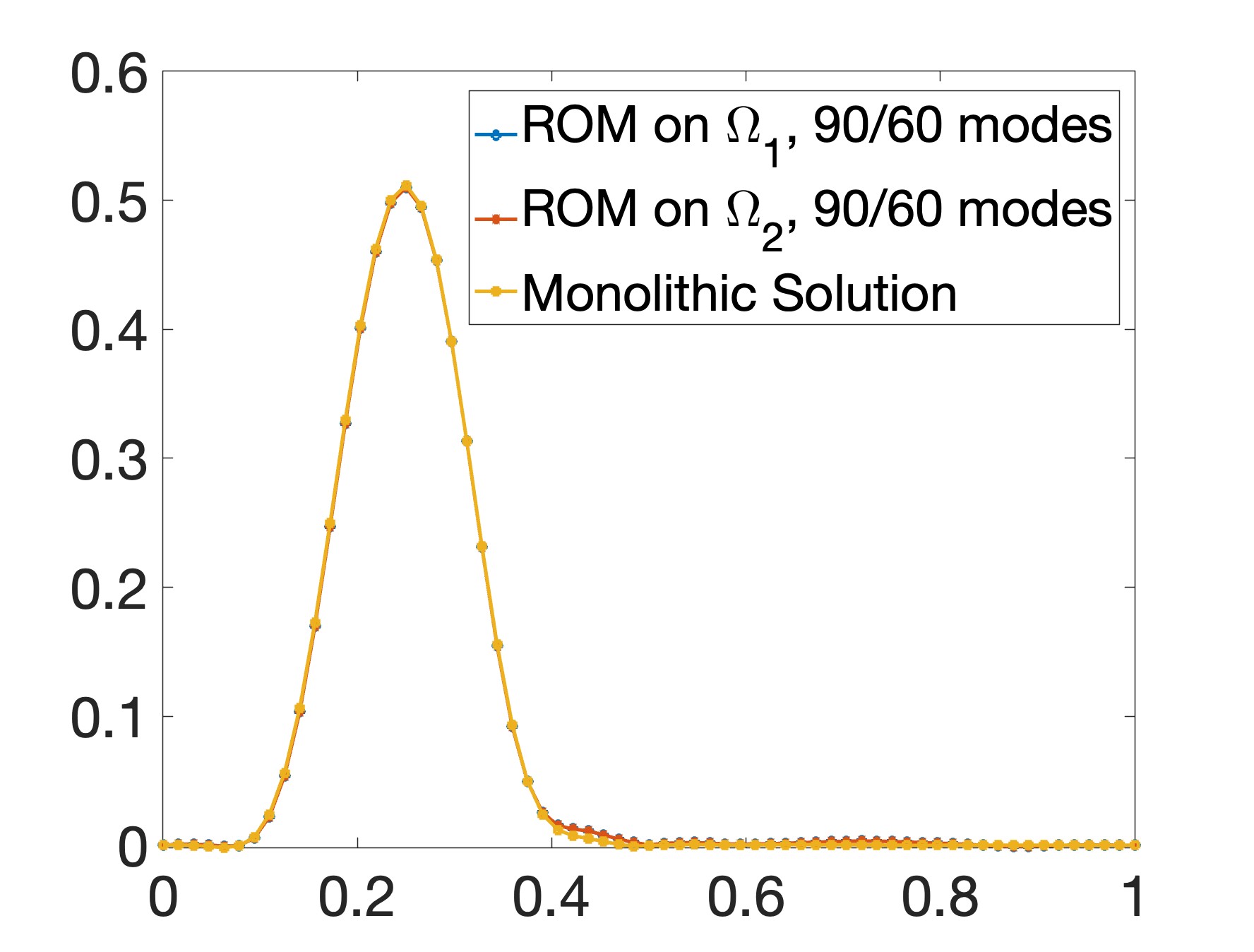}\label{AdC:fig:RRrLM_Inter_90_60_pred}} 
\hfill
\subfigure[\REV{FR-fLM, 90/60 modes}]{\includegraphics[scale=.08]{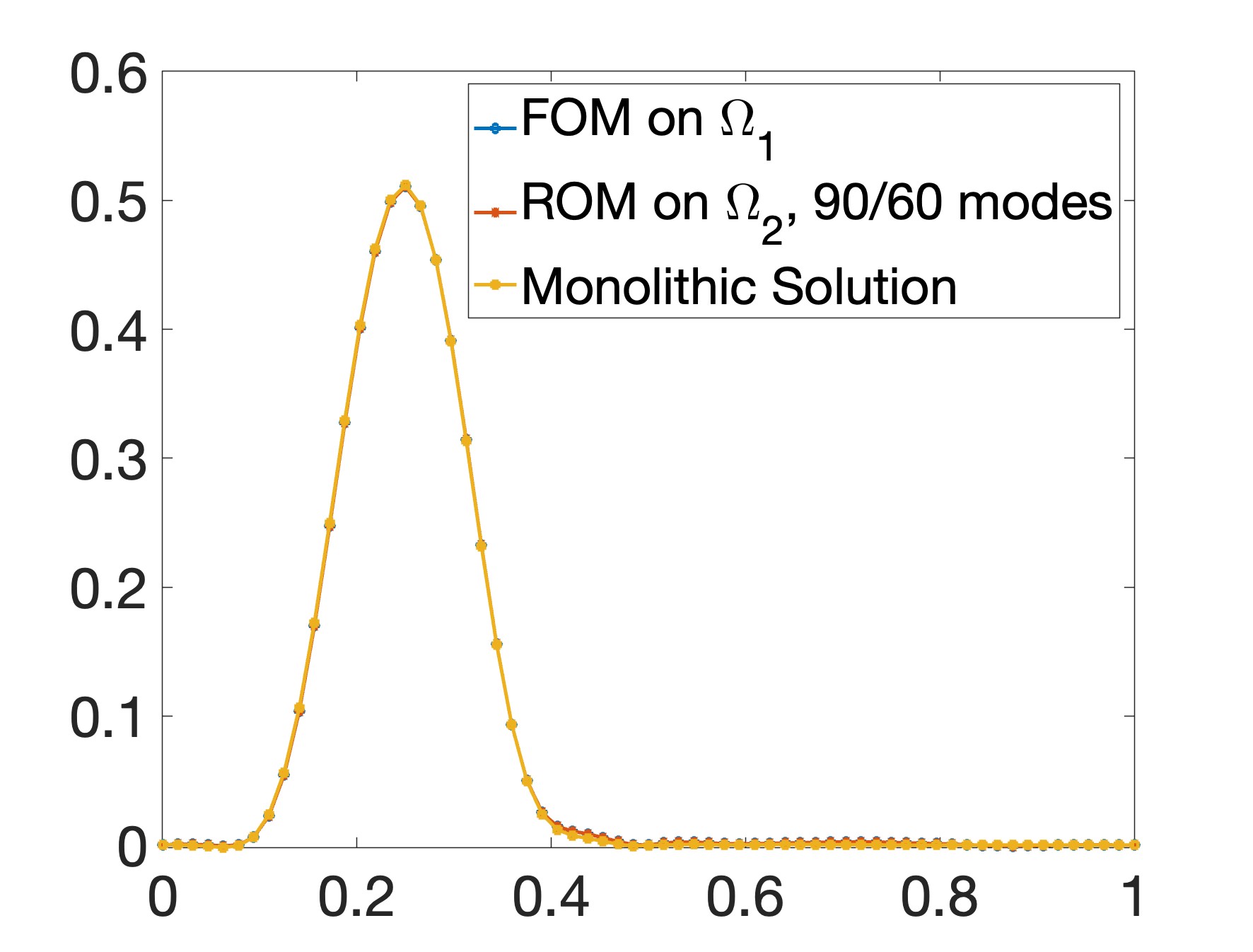}\label{AdC:fig:FRfLM_Inter_90_60_pred}} 
\hfill
\subfigure[FR-rLM 90/60 modes]{\includegraphics[scale=.08]{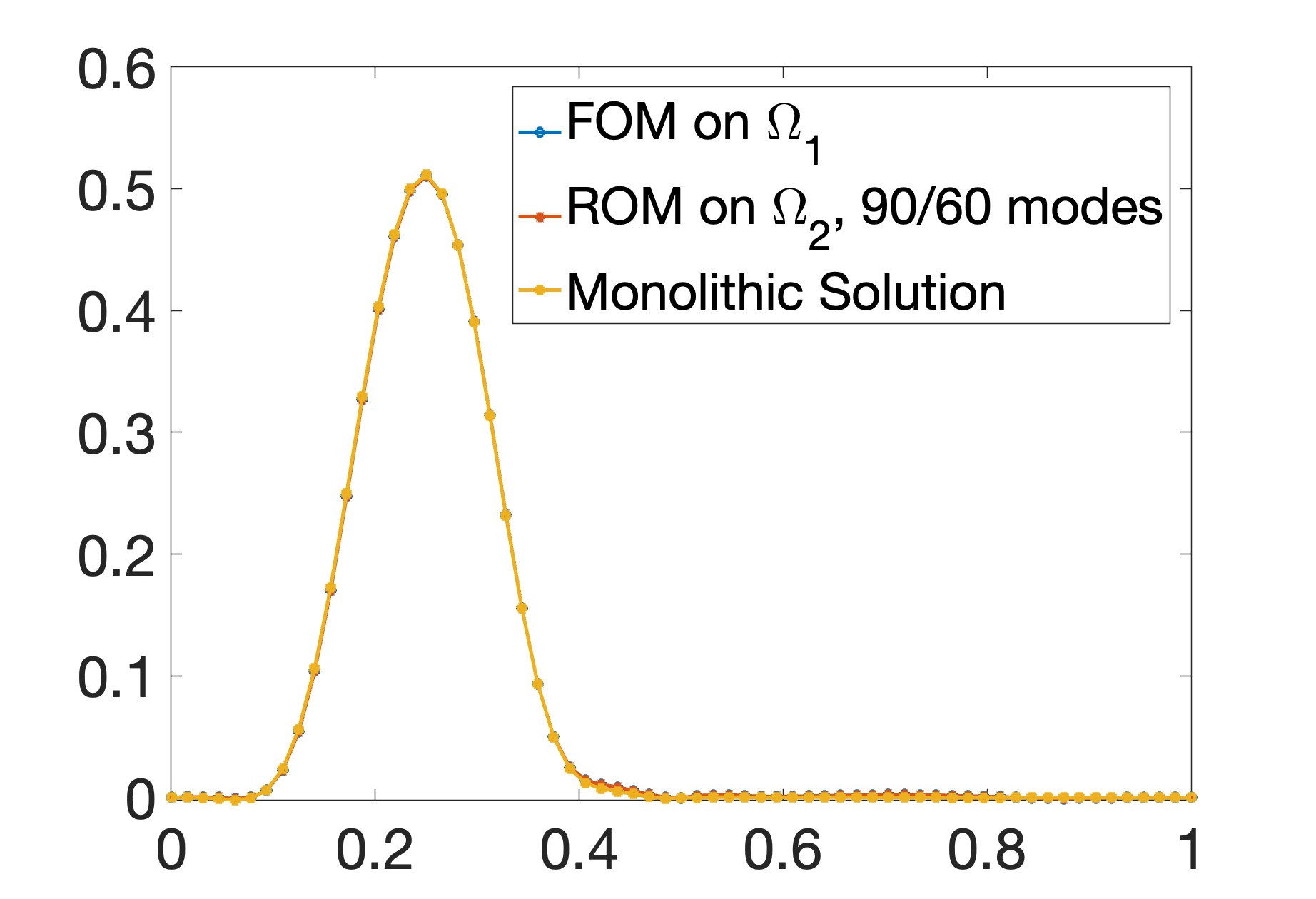}\label{AdC:fig:FRrLM_Inter_90_60_pred}} 
\caption{\REV{Comparison of the interface states at $T_f$ for the partitioned schemes  with provably well-posed Schur complements vs. the single domain (monolithic) solution of the model problem. Predictive test in the DD setting. The oscillations in the FR-rLM formulations with ``small'' RB sizes are due to accumulation of interface errors during the time integration caused by the approximate enforcement \eqref{eq:inequality} of the coupling condition. The legend ``$m/n$ modes'' corresponds to $m$ interior and $n$ interface modes.}}\label{AdC:fig:InterPlots_Pred}
\end{figure}
%

\subsection{\REV{Transmission problem case}}\label{sec:multi}
\REV{For the TP variant of our model problem, we parameterize the model problem using a discontinuous diffusion coefficient, i.e., $\kappa_1\neq \kappa_2$. Since in our experiments we observed essentially the same behavior as in the DD case, we limit ourselves to showing results for a predictive TP test. For this test, we consider a diffusion coefficient $\kappa_1 = 10^{-5}$ in $\Omega_1$, and $\kappa_2 = 10^{-4}$ in $\Omega_2$. To obtain the subdomain snapshots, we proceed as in the DD case and restrict a single domain finite element solution to $\Omega_1$ and $\Omega_2$, respectively. The single domain solution is computed using the time step} \REV{$\Delta t_s = 1.684\times 10^{-3}$}.

\REV{The snapshot energy plots in Figure \REVpk{\ref{AdC:fig:snapEnergyMulti}} confirm that the rapid decay of the singular values continues to hold in the TP case. In particular, just} 
\REV{$d_{1,0}=23$, $d_{2,0}=19$ and $d_{i,\gamma}=5$} 
\REV{interior and interface nodes, respectively are sufficient to capture $99\%$ of the energy in $X_{i,0}$ and $X_{i,\gamma}$.}

\begin{figure}[h]
  \begin{center}
   \includegraphics[width=0.5\textwidth]{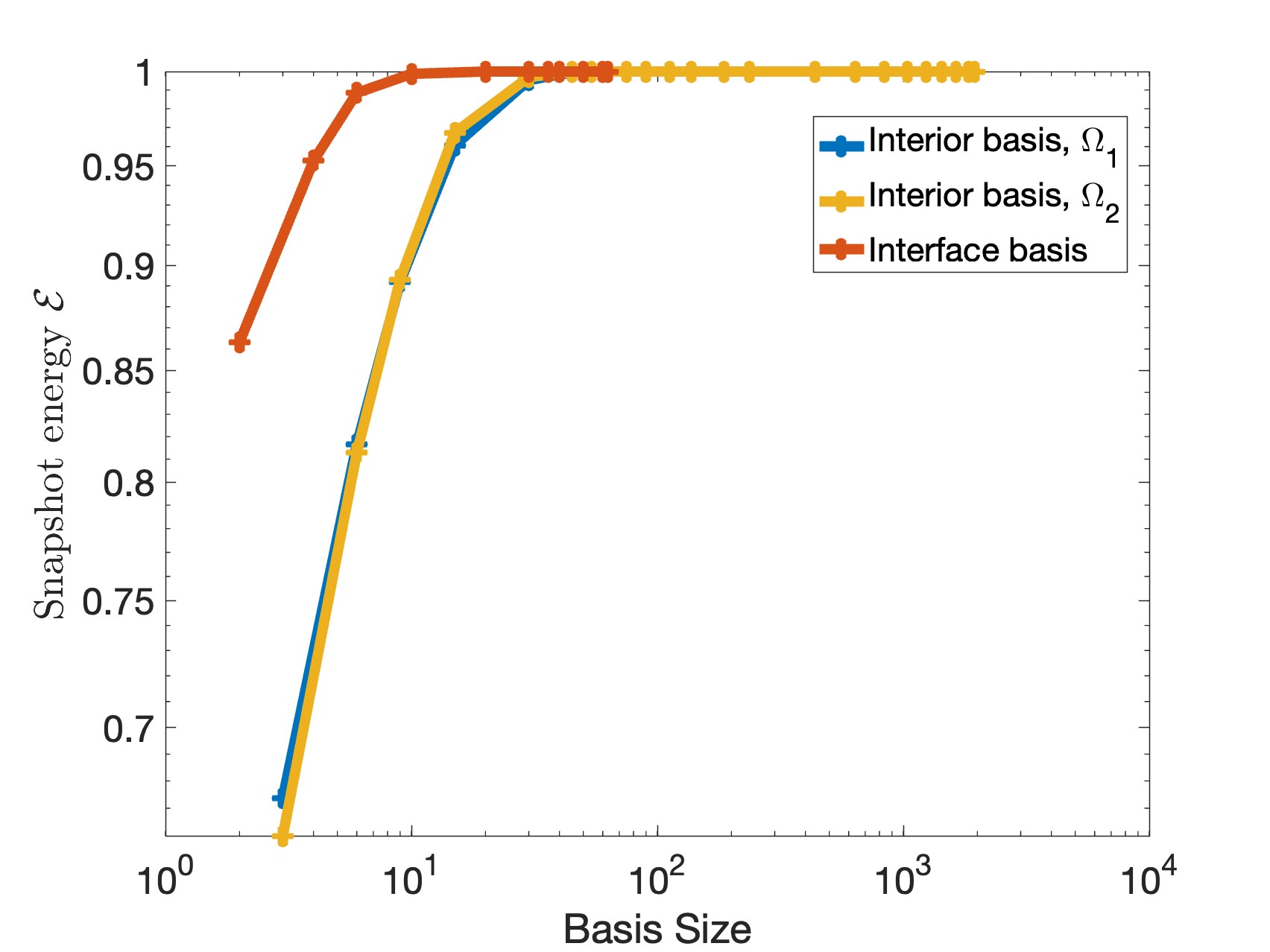}
  \end{center}
		\caption{Snapshot energy \eqref{AdC:eq:snapEnergy} as a function of the POD basis size for the interior $\REV{\Phi}_{i,0}$ and interface $\REV{\Phi}_{i,\gamma}$ bases in the predictive regime for the multiphysics example.} \label{AdC:fig:snapEnergyMulti}
\end{figure}

\REV{Next, in Figures \ref{AdC:fig:L2errMulti} and \ref{AdC:fig:L2errVsTime_BasisSizesMulti}, we show the relative errors for each formulation. Similar to the DD case, the relative error at the final time of the FR-rLM formulation for small RB sizes is much larger than that for the other formulations.  The plots in Figure \ref{AdC:fig:L2errVsTime_BasisSizesMulti} confirm that, again, this is due to the accumulation of error during the explicit time stepping. Interestingly enough, these plots also show that the growth of the relative error in the TP case occurs at roughly the same time instances as in the DD case.  Moreover, 
the reader can observe convergence with basis refinement for all proposed schemes \REVpk{(Figure \ref{AdC:fig:L2errVsTime_BasisSizesMulti})}.}

\REV{We continue with plots of the condition number of the Schur complement for all coupled formulations in Figure \ref{AdC:fig:condSMulti}. These plots mirror the behavior of this quantity from the DD case and once again underscore the importance of using Lagrange multiplier spaces that satisfy the trace compatibility condition; see Remark \ref{lem:trace}. Most notably, Figure \ref{AdC:fig:condSMulti}(b) shows no substantive difference in the behavior of the condition number in the TP case as long as the formulation satisfies the trace compatibility condition.}

\REV{Finally, in Figure \ref{AdC:fig:InterPlots_MultiPhysics}, we take a look at the interface states of all formulations with provably well-posed Schur complements at the final time $T_f$. The results in this figure affirm yet again the consistency in the behavior of the proposed schemes for the DD and TP examples.}

\begin{figure}[h!]
  \begin{center}
    \includegraphics[width=0.6\textwidth]{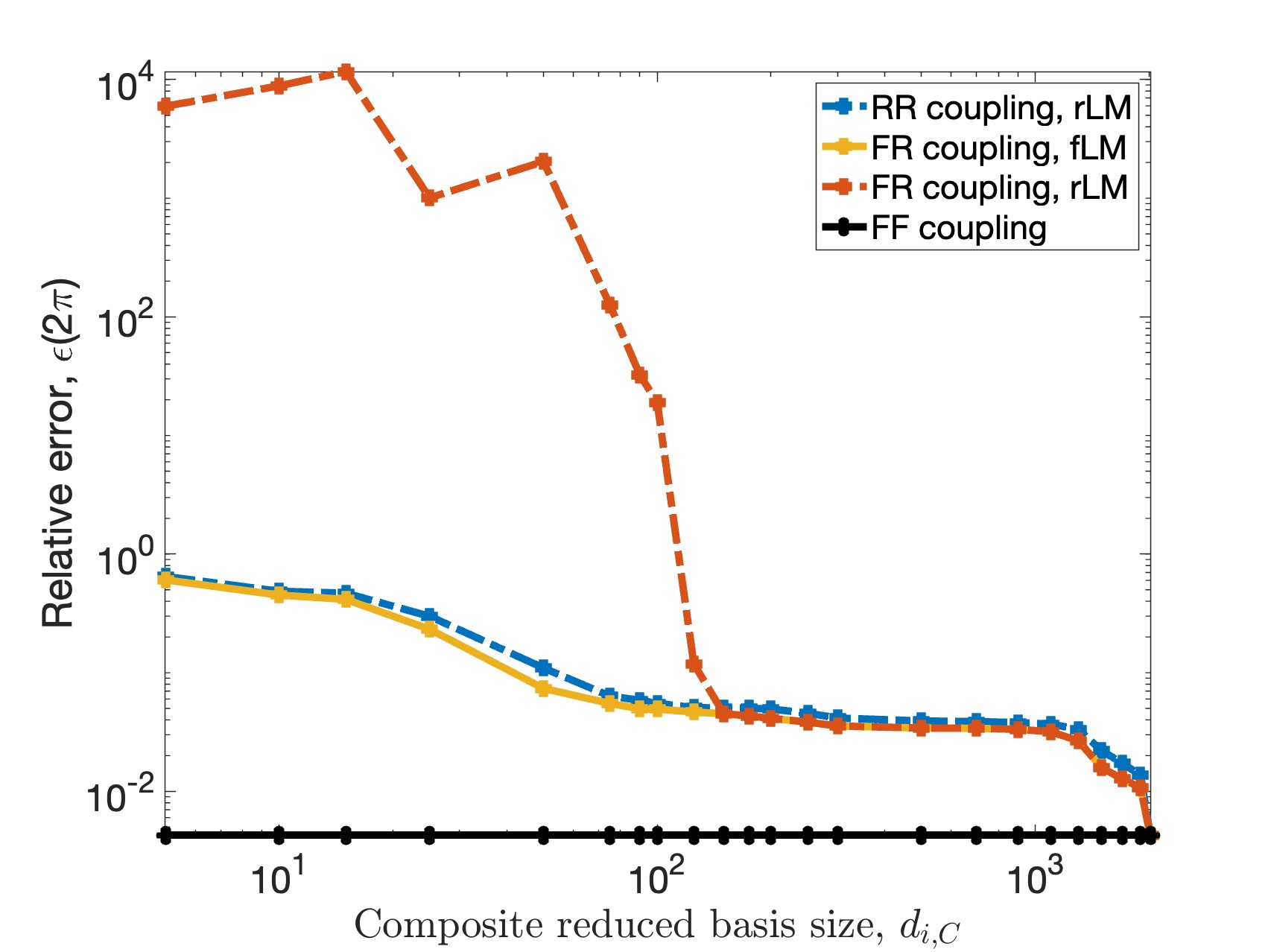}
  \end{center}
\caption{\REV{Relative error \eqref{AdC:eq:relError} at $T_f=2\pi$ of the partitioned solution for each coupled formulation as a function of the composite reduced basis size $d_{i,C}=d_{i,0}+d_{i,\gamma}$  in the predictive regime for the TP example.}} \label{AdC:fig:L2errMulti}
\end{figure}
\begin{figure}[t!]
\centering
\subfigure[$d_{i,0} = 15, d_{i,\gamma} = 10$]{\includegraphics[scale=.10]{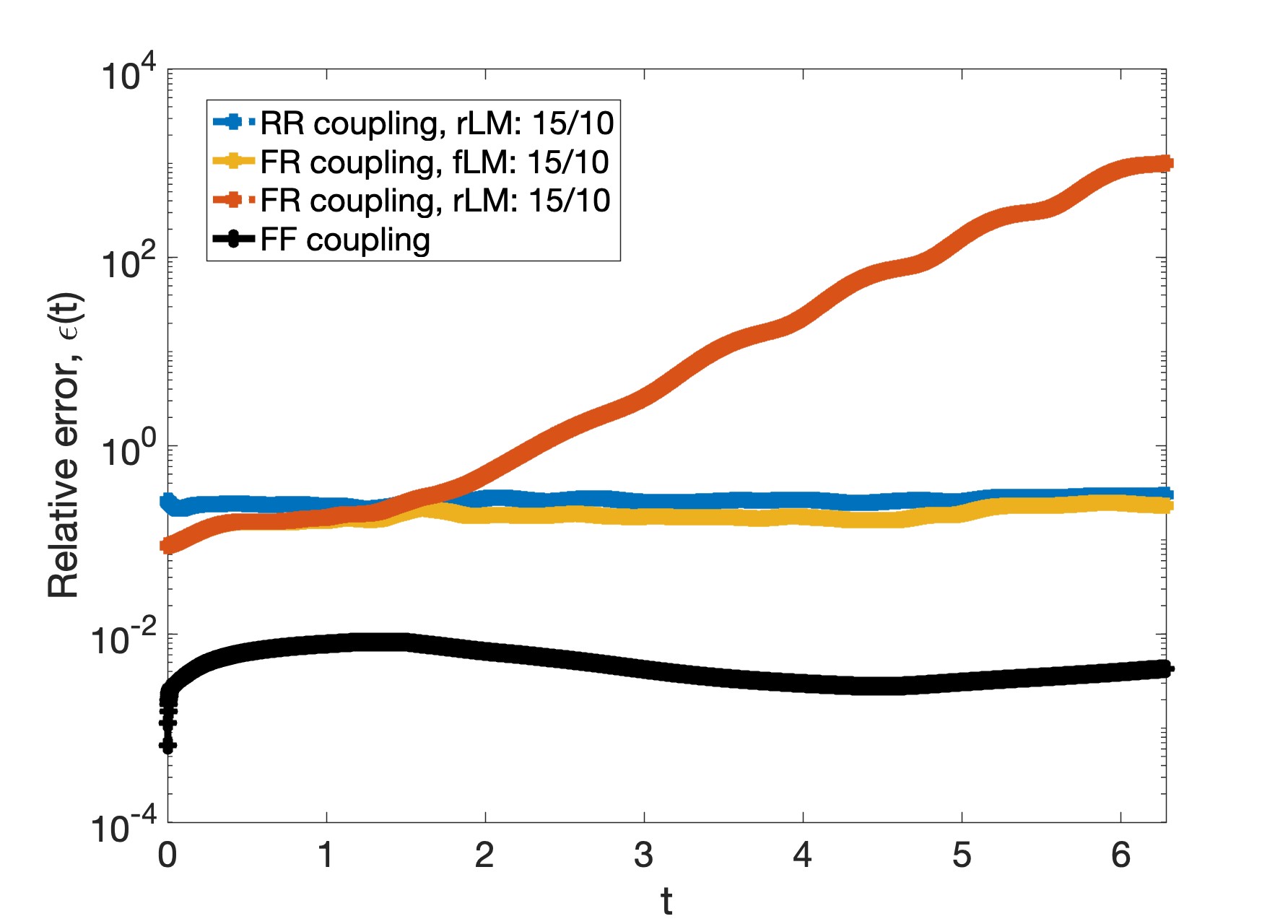}\label{AdC:fig:errVTime1510Multi}} 
\subfigure[$d_{i,0} = 60, d_{i,\gamma} = 40$]{\includegraphics[scale=.10]{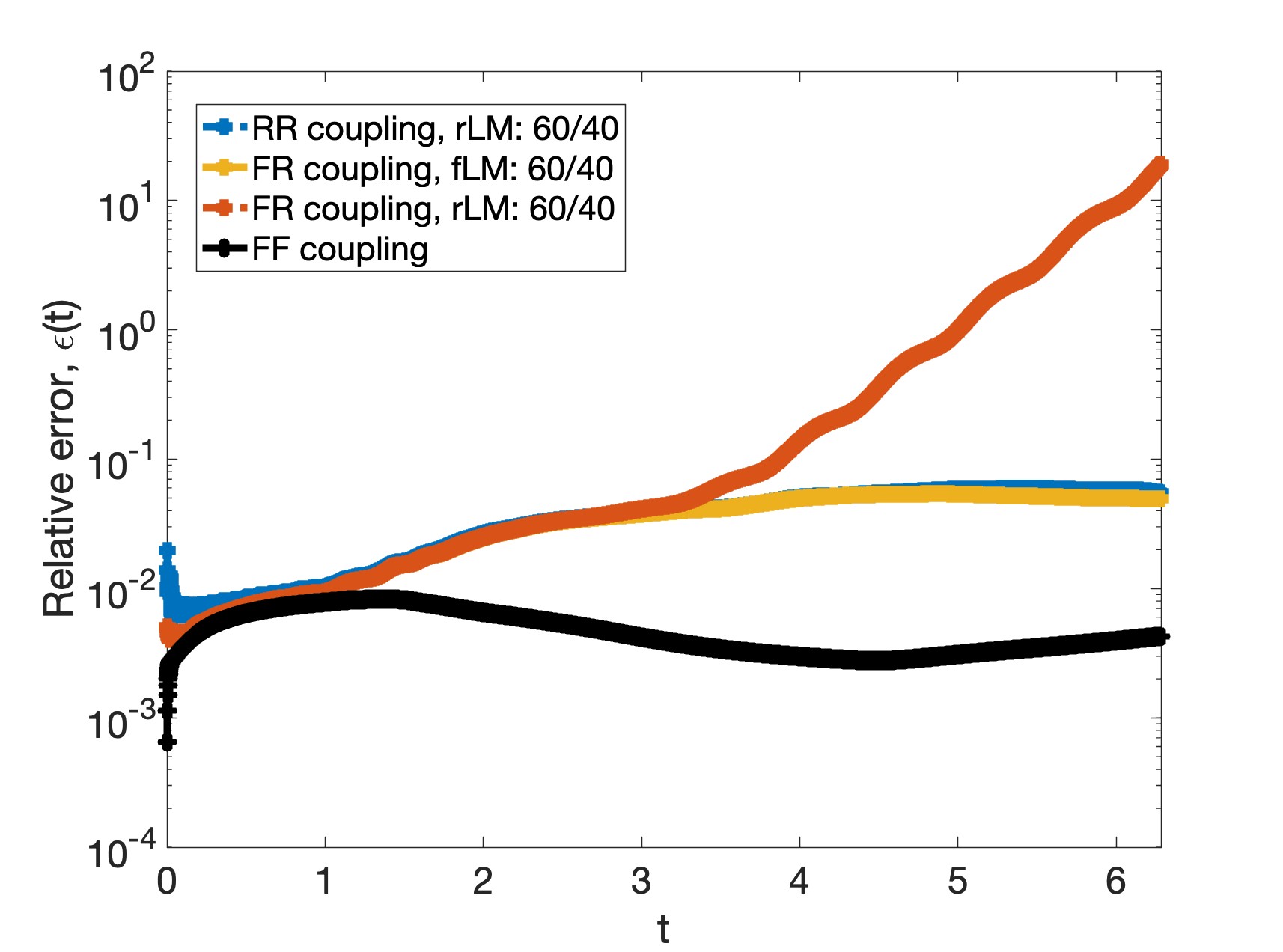}\label{AdC:fig:errVTime6040Multi}} 
\subfigure[$d_{i,0} = 90, d_{i,\gamma} = 60$]{\includegraphics[scale=.10]{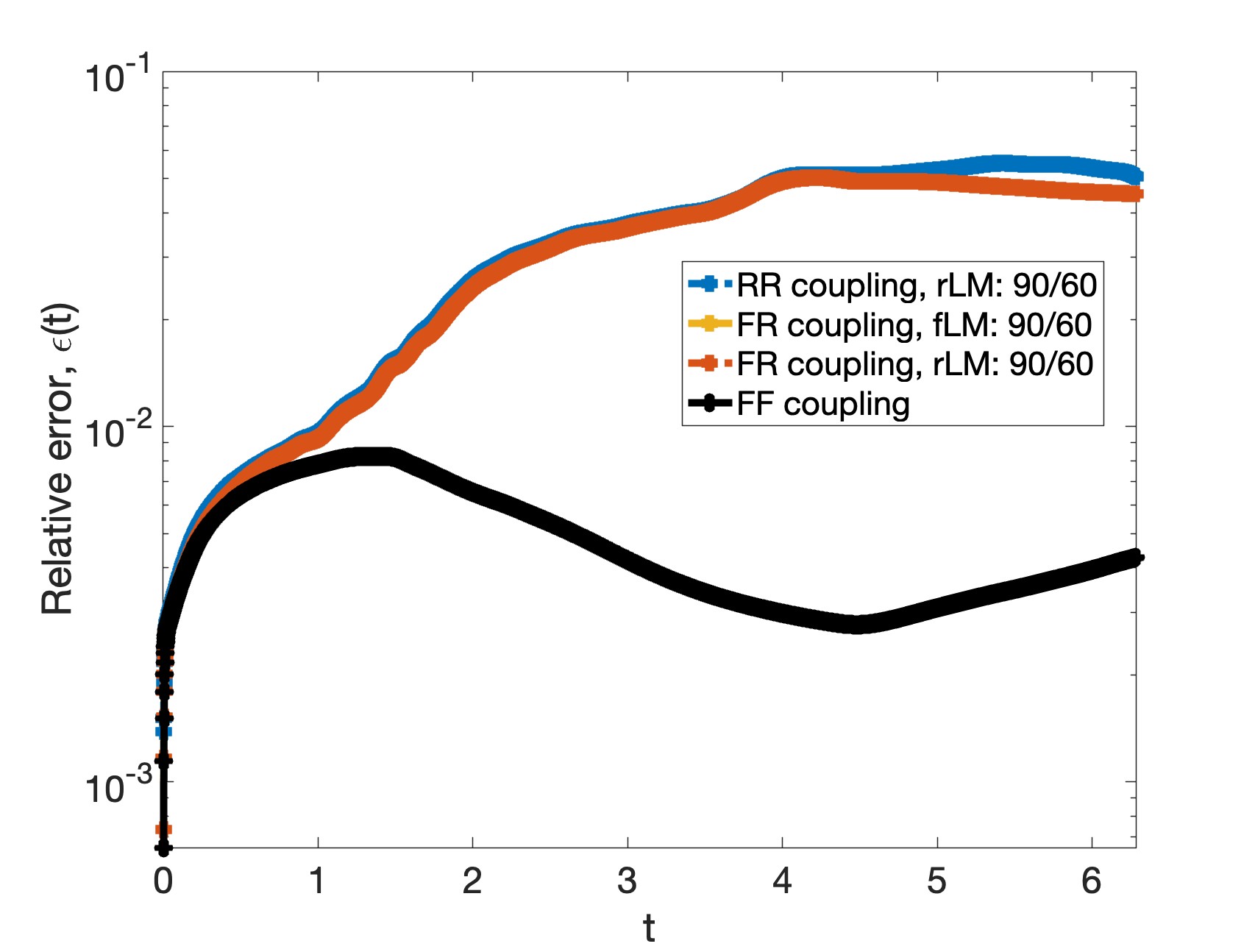}\label{AdC:fig:errVTime9060Multi}} 
\subfigure[$d_{i,0} = 237, d_{i,\gamma} = 63$]{\includegraphics[scale=.10]{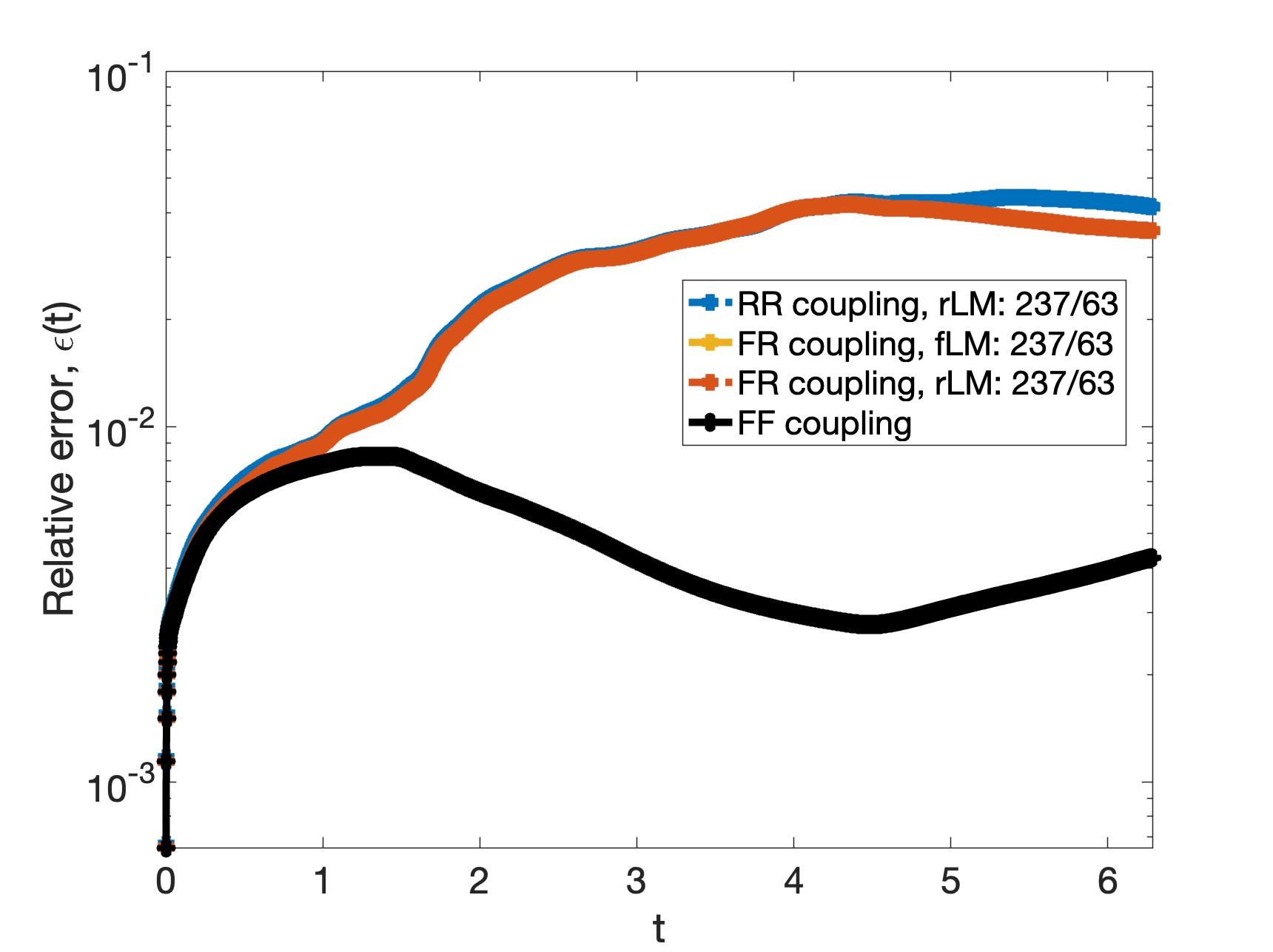}\label{AdC:fig:errVTime23763Multi}} 
\subfigure[$d_{i,0} = 1953, d_{i,\gamma} = 63$]{\includegraphics[scale=.10]{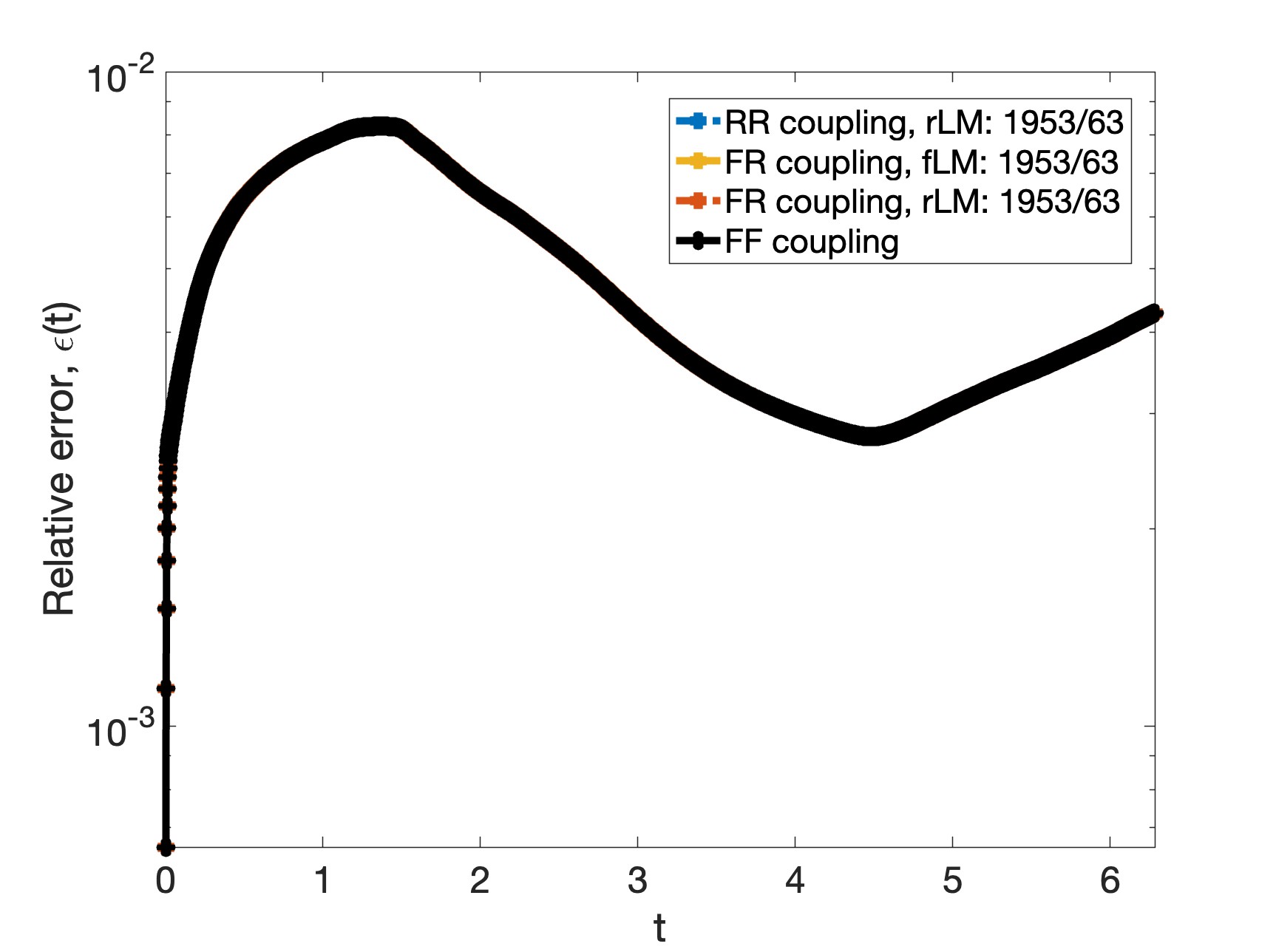}\label{AdC:fig:errVTime195363Multi}} 
\subfigure[Single domain ROM]{\includegraphics[scale=.105]{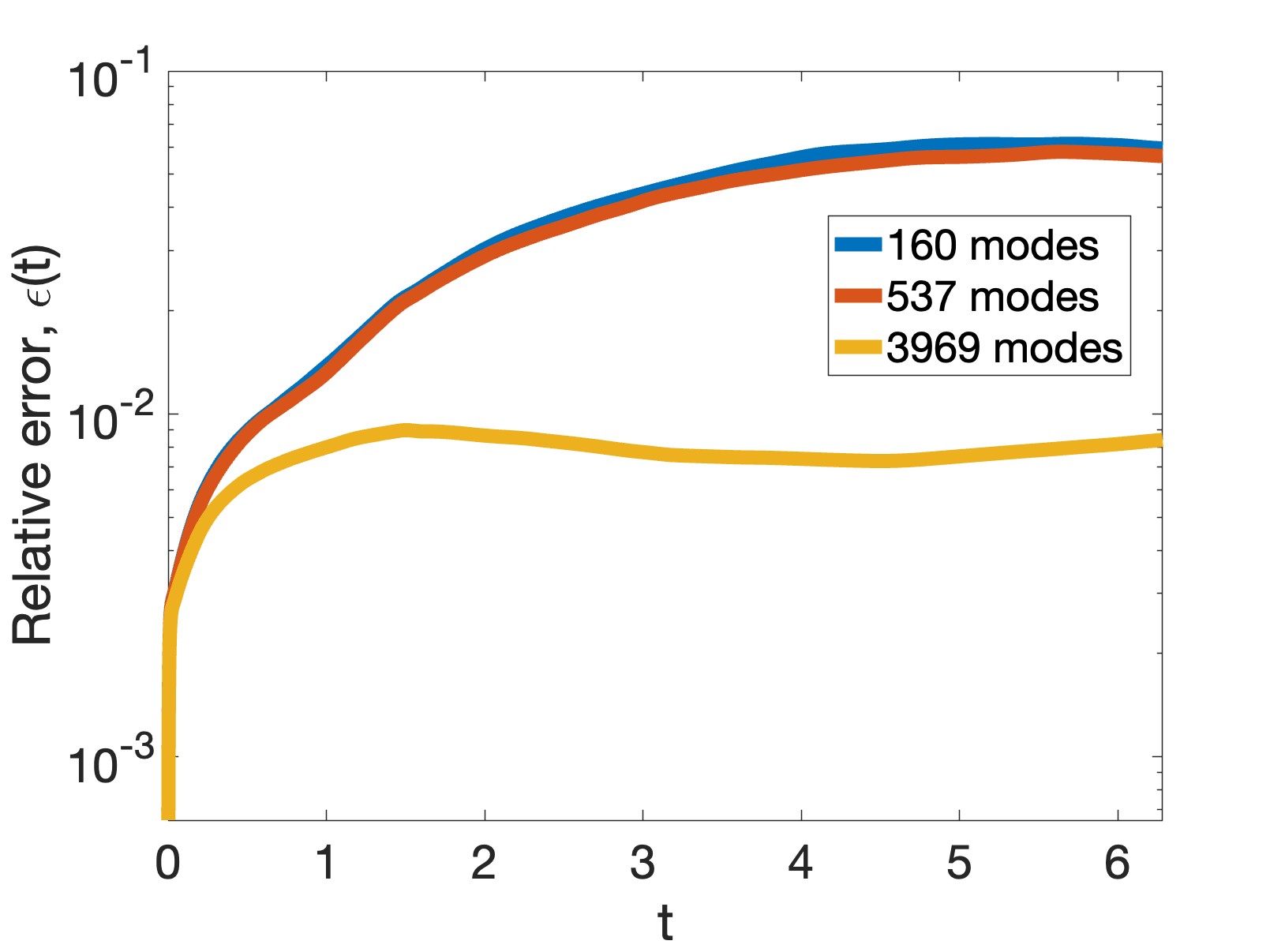}\label{AdC:fig:errVsTime_SingleROM_PredMultiPhysics}} 
\caption{\REV{Relative error \eqref{AdC:eq:relError} of the partitioned solution for select basis sizes as a function of time in the predictive regime for the TP example. The error plots for the FR-rLM and FR-fLM formulations in subfigures (c,d), and the error plots for all couplings in subfigure (e), are indistinguishable.}}\label{AdC:fig:L2errVsTime_BasisSizesMulti}
\end{figure}

\begin{figure}[t!]
\centering
	\subfigure[All coupled problems]{\includegraphics[scale=.13]{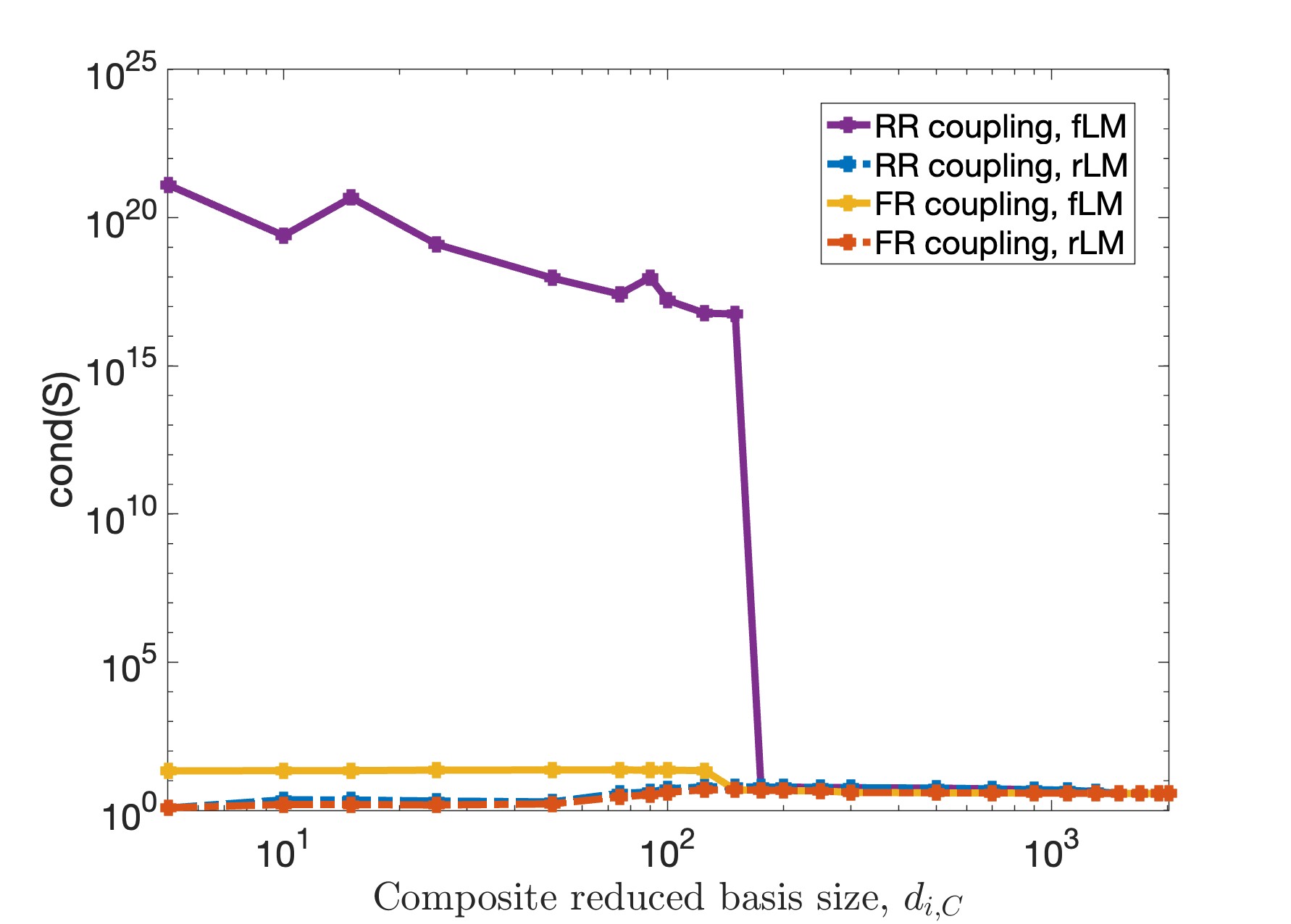}\label{AdC:fig:cond_RRfLM_reprodMulti}} 
\subfigure[Provably well-posed Schur complement]{\includegraphics[scale=.13]{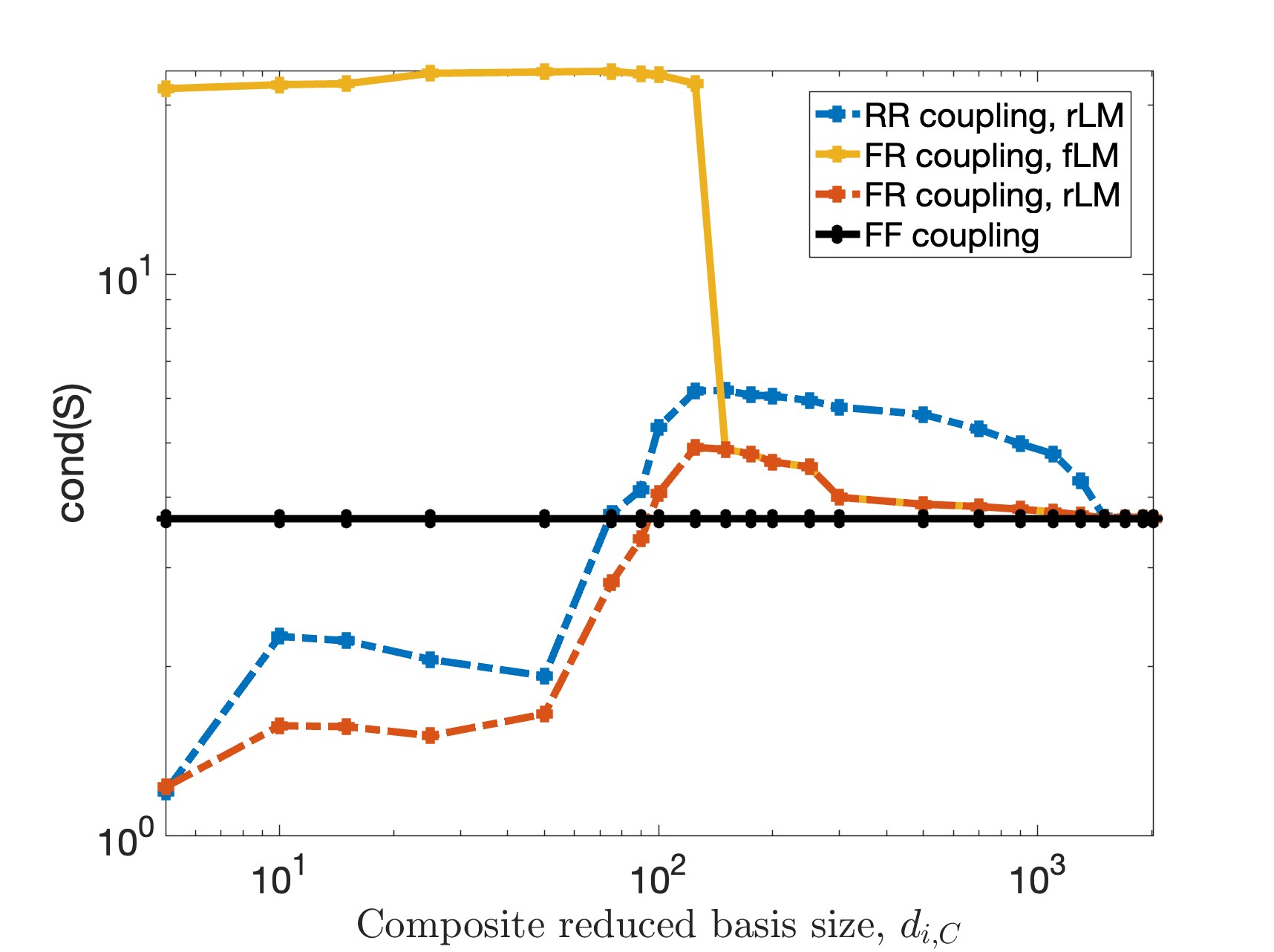}\label{AdC:fig:cond_FFfLM_reprodMulti}}  
\caption{\REV{Condition number of Schur complement matrix for each coupled formulation as a function of the composite reduced basis size $d_{i,C}=d_{i,0}+d_{i,\gamma}$ size  in the predictive regime for the TP example.  Subfigure (a) reports results for all methods evaluated, whereas subfigure (b) focuses only 
on methods with provably well-posed Schur complements.}}\label{AdC:fig:condSMulti}
\end{figure}

\begin{figure}[!ht]
\centering
\subfigure[RR-rLM, 15/10 modes]{\includegraphics[scale=.070]{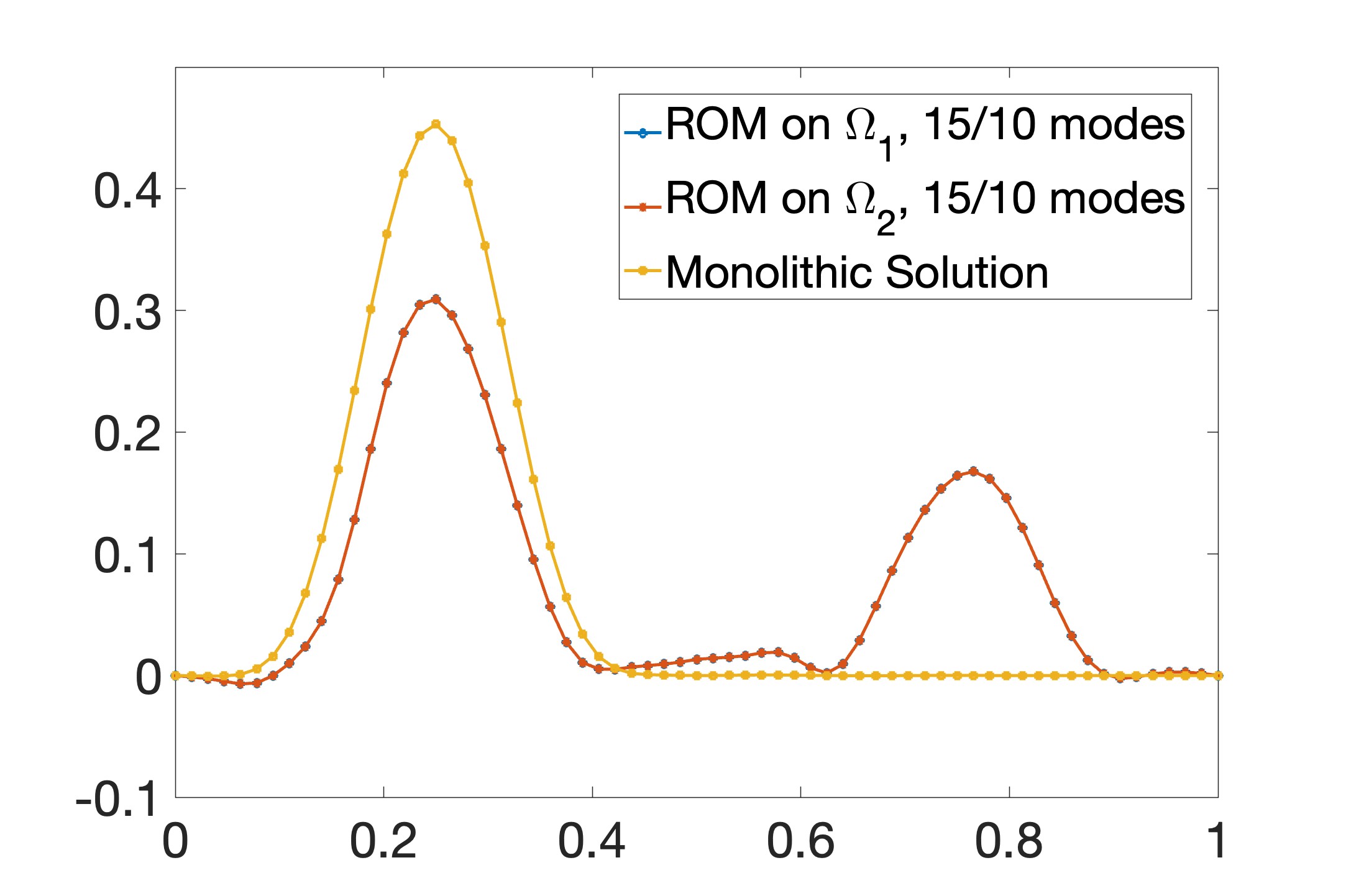}\label{AdC:fig:RRrLM_Inter_15_10_multi}} 
\subfigure[FR-fLM, 15/10 modes]{\includegraphics[scale=.070]{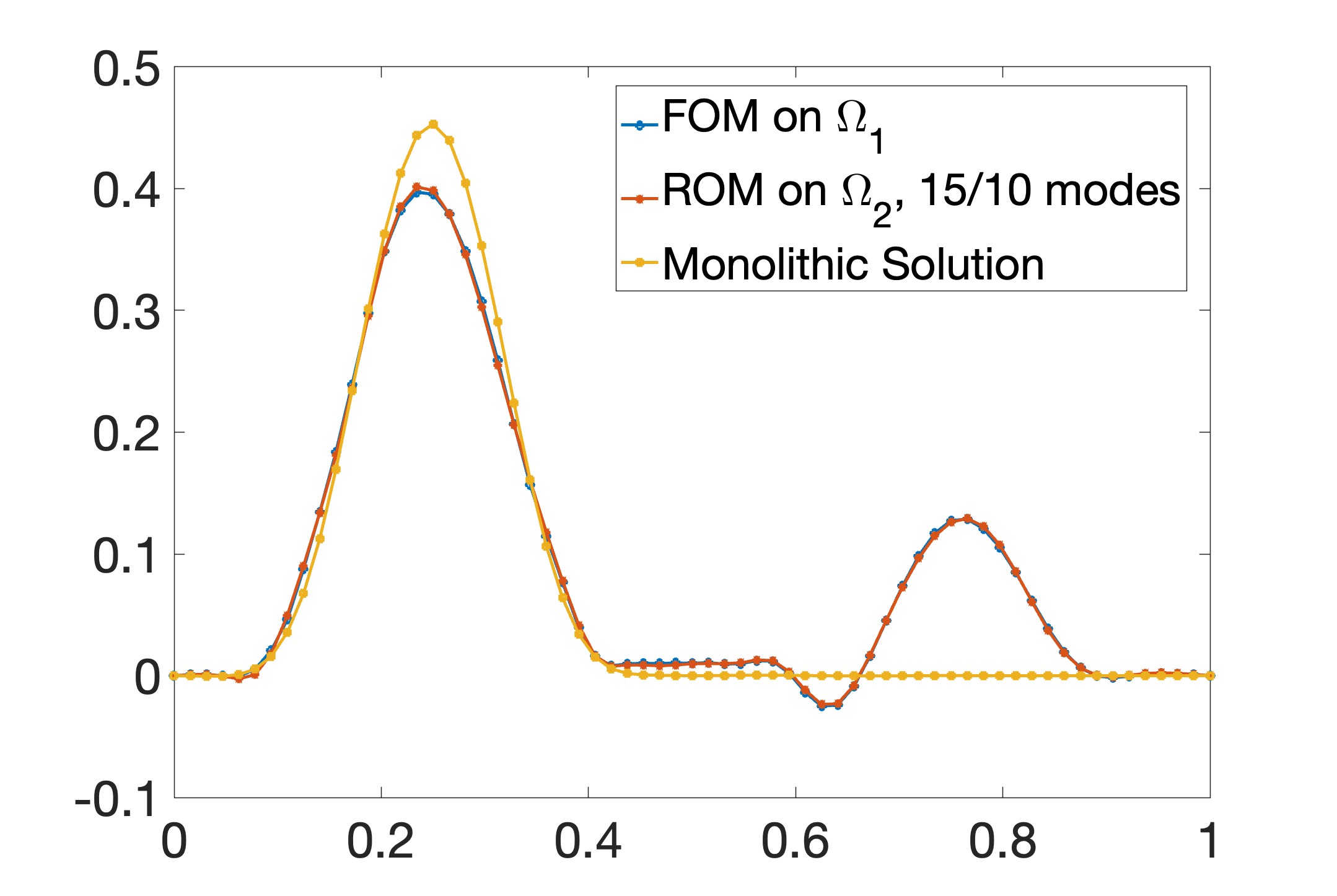}\label{AdC:fig:FRfLM_Inter_15_10_multi}} 
\subfigure[FR-rLM, 15/10 modes]{\includegraphics[scale=.070]{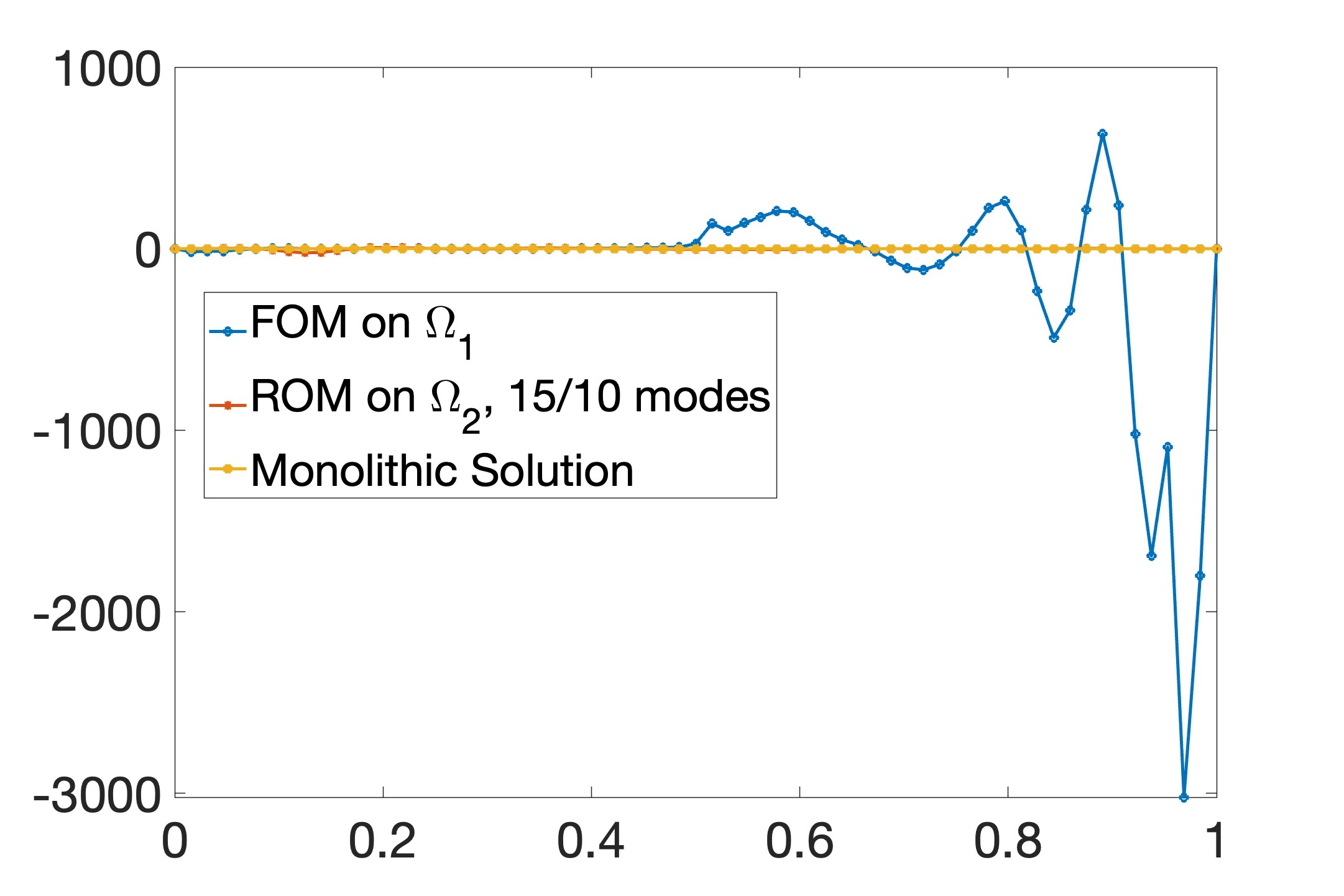}\label{AdC:fig:FRrLM_Inter_15_10_multi}} 
\subfigure[RR-rLM, 60/40 modes]{\includegraphics[scale=.070]{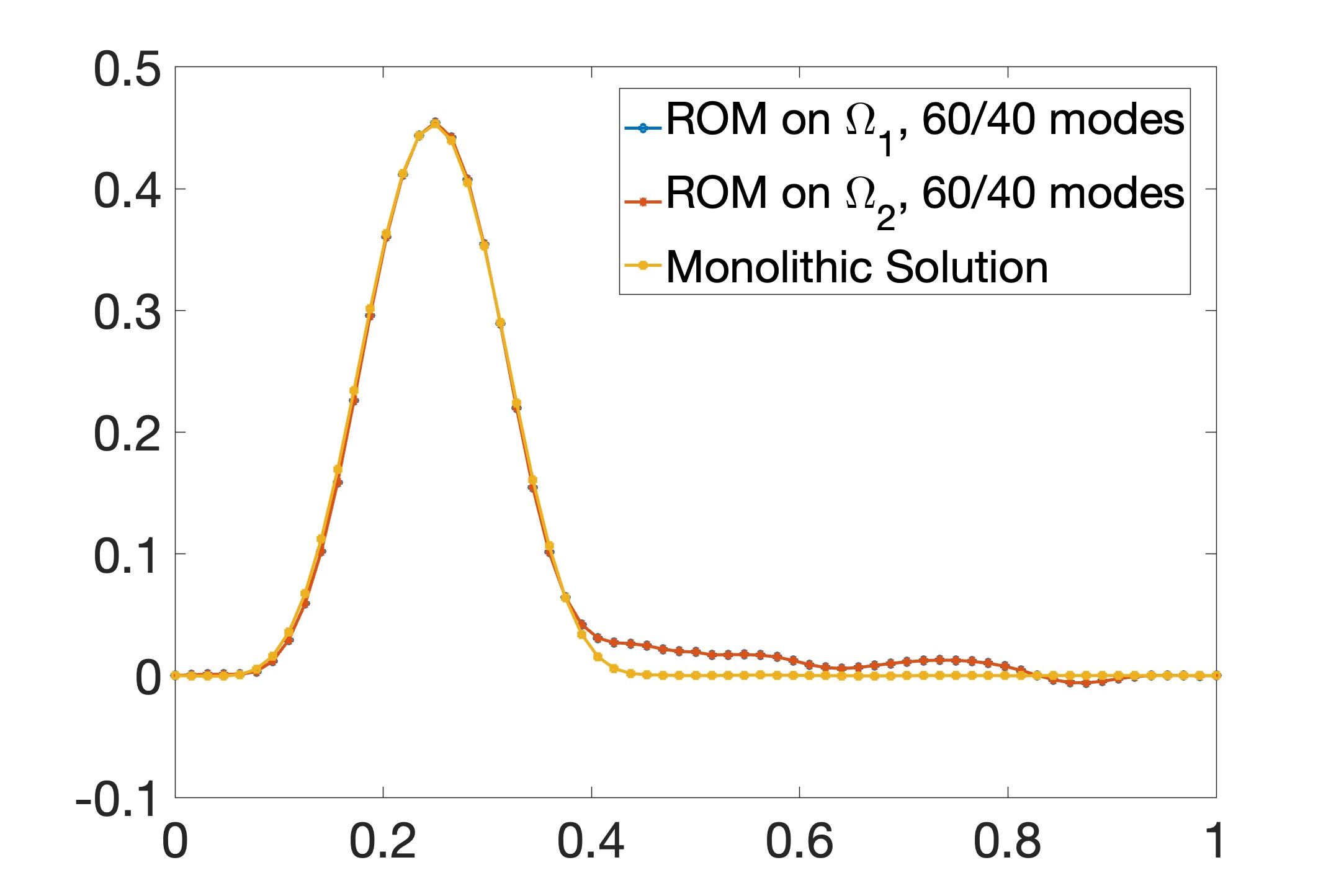}\label{AdC:fig:RRrLM_Inter_60_40_multi}} 
\subfigure[FR-fLM, 60/40 modes]{\includegraphics[scale=.070]{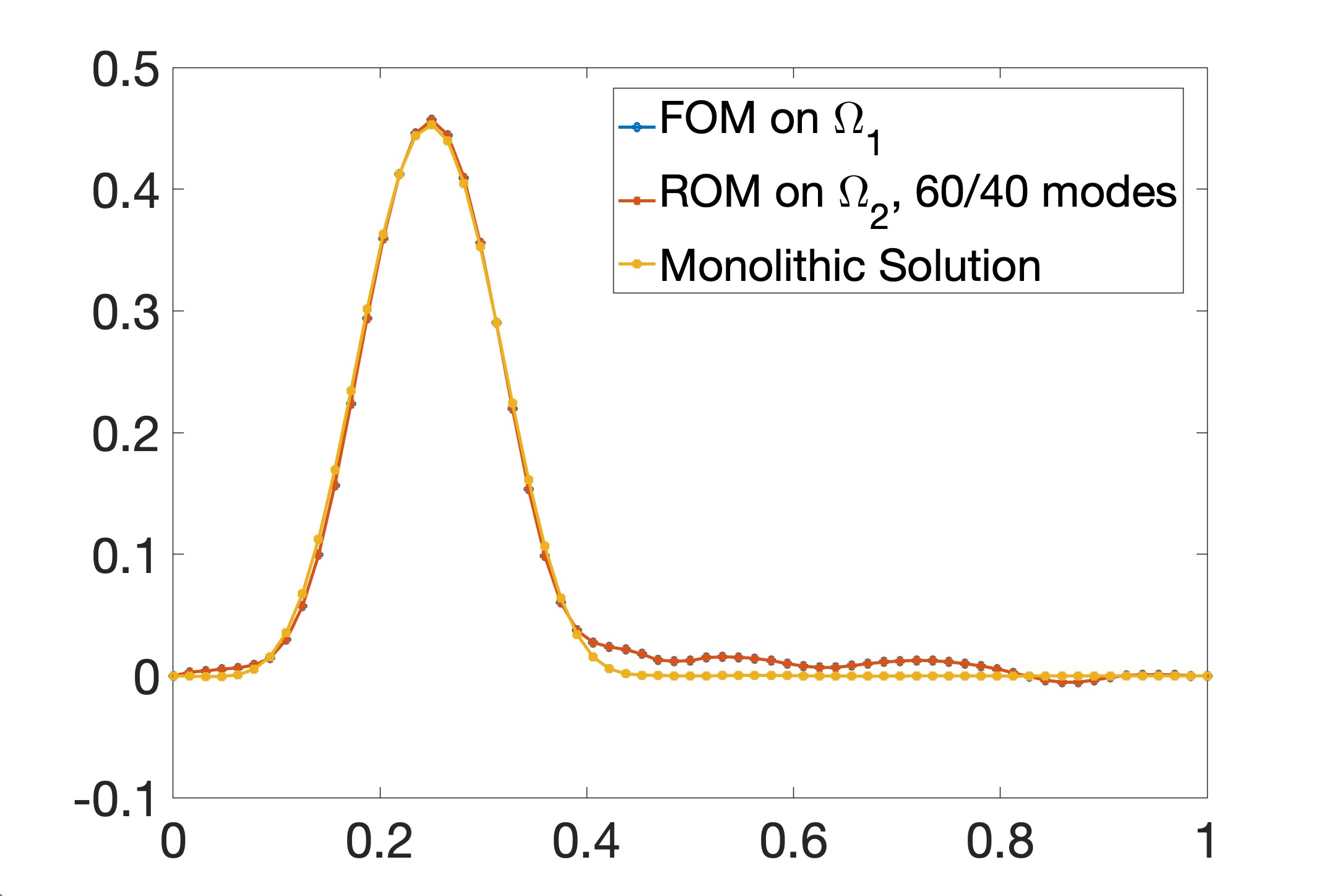}\label{AdC:fig:FRfLM_Inter_60_40_multi}} 
\subfigure[FR-rLM, 60/40 modes]{\includegraphics[scale=.070]{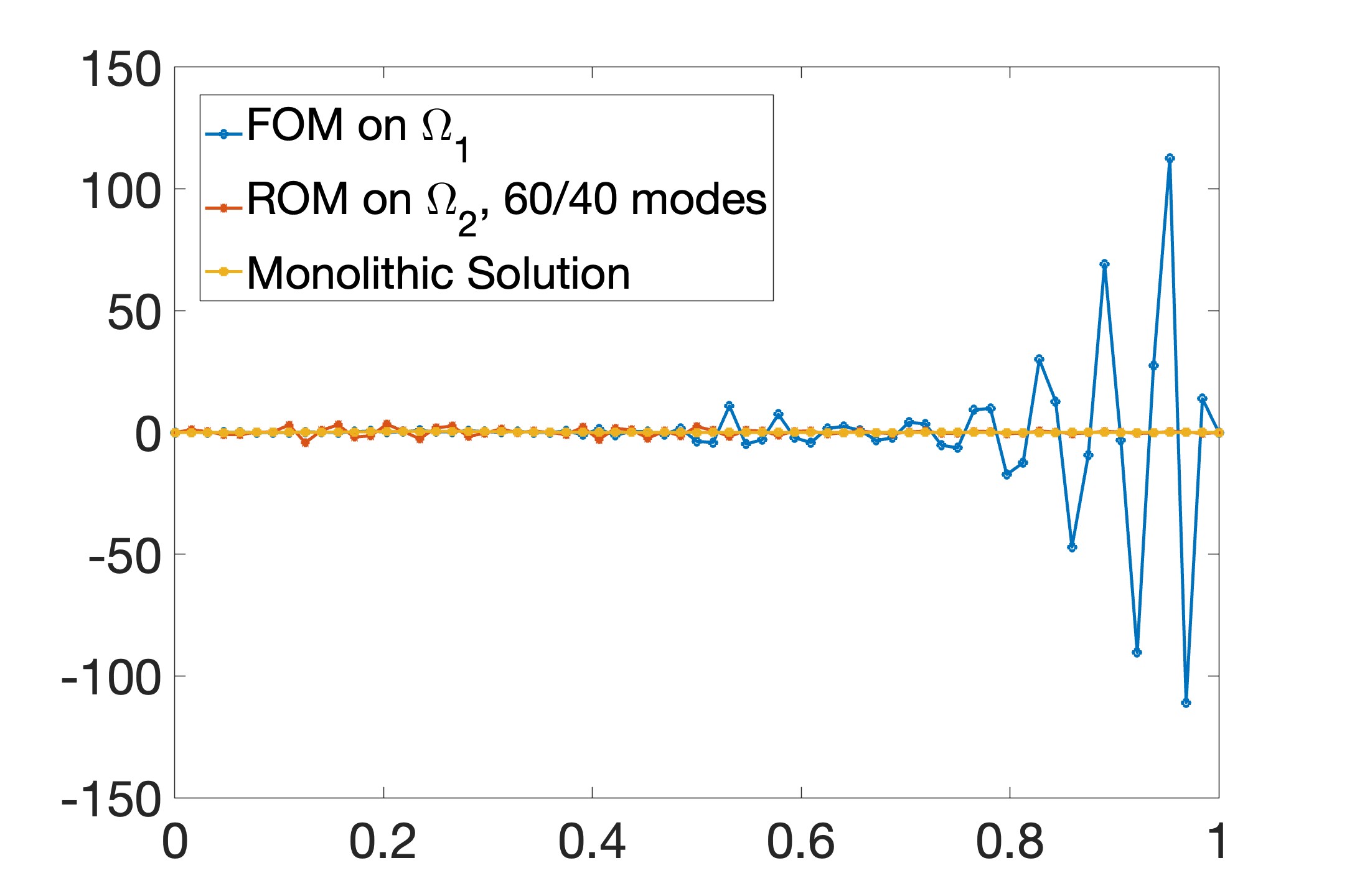}\label{AdC:fig:FRrLM_Inter_60_40_multi}} 
\subfigure[\REV{RR-rLM, 90/60 modes}]{\includegraphics[scale=.070]{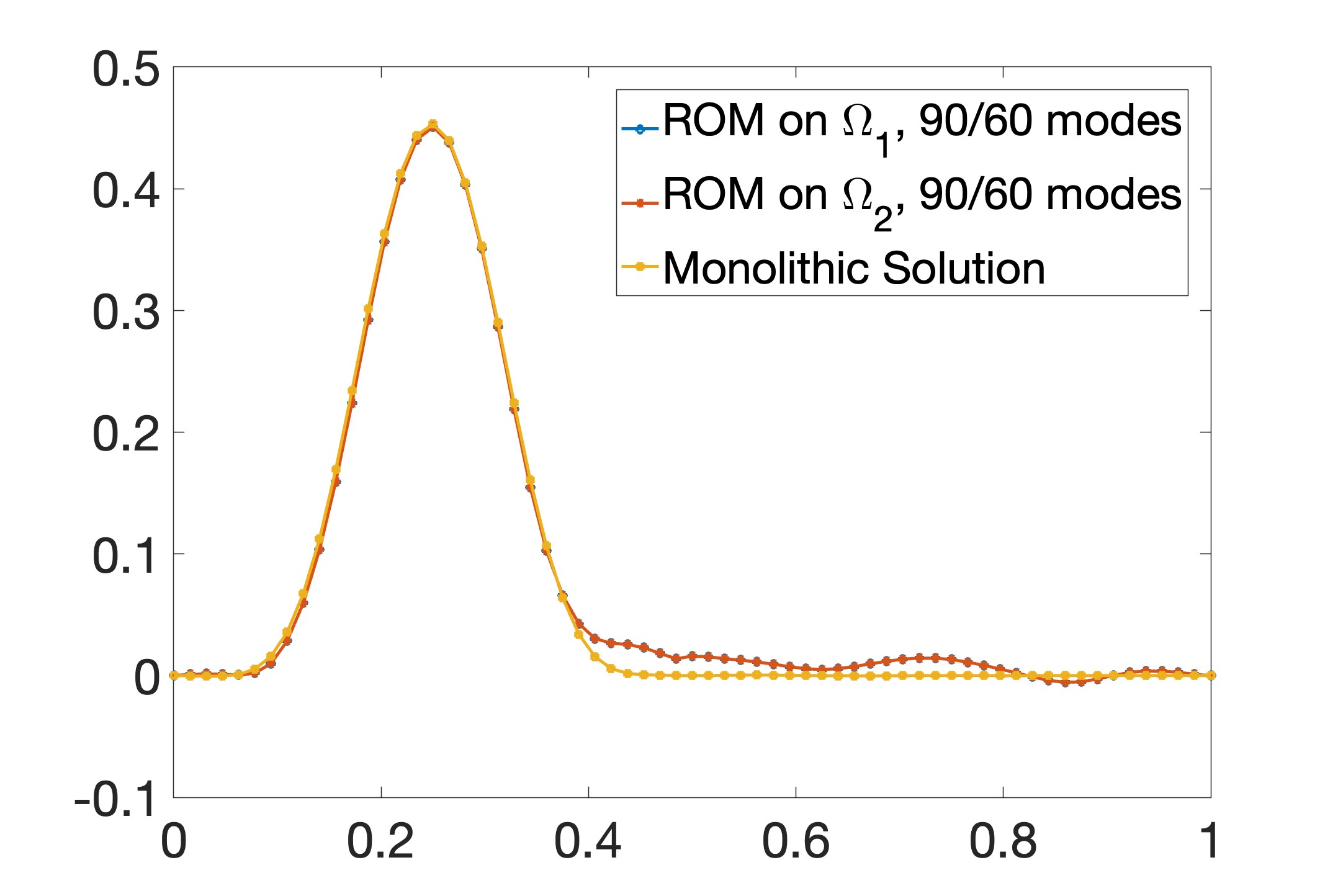}\label{AdC:fig:RRrLM_Inter_90_60_multi}} 
\subfigure[\REV{FR-fLM, 90/60 modes}]{\includegraphics[scale=.070]{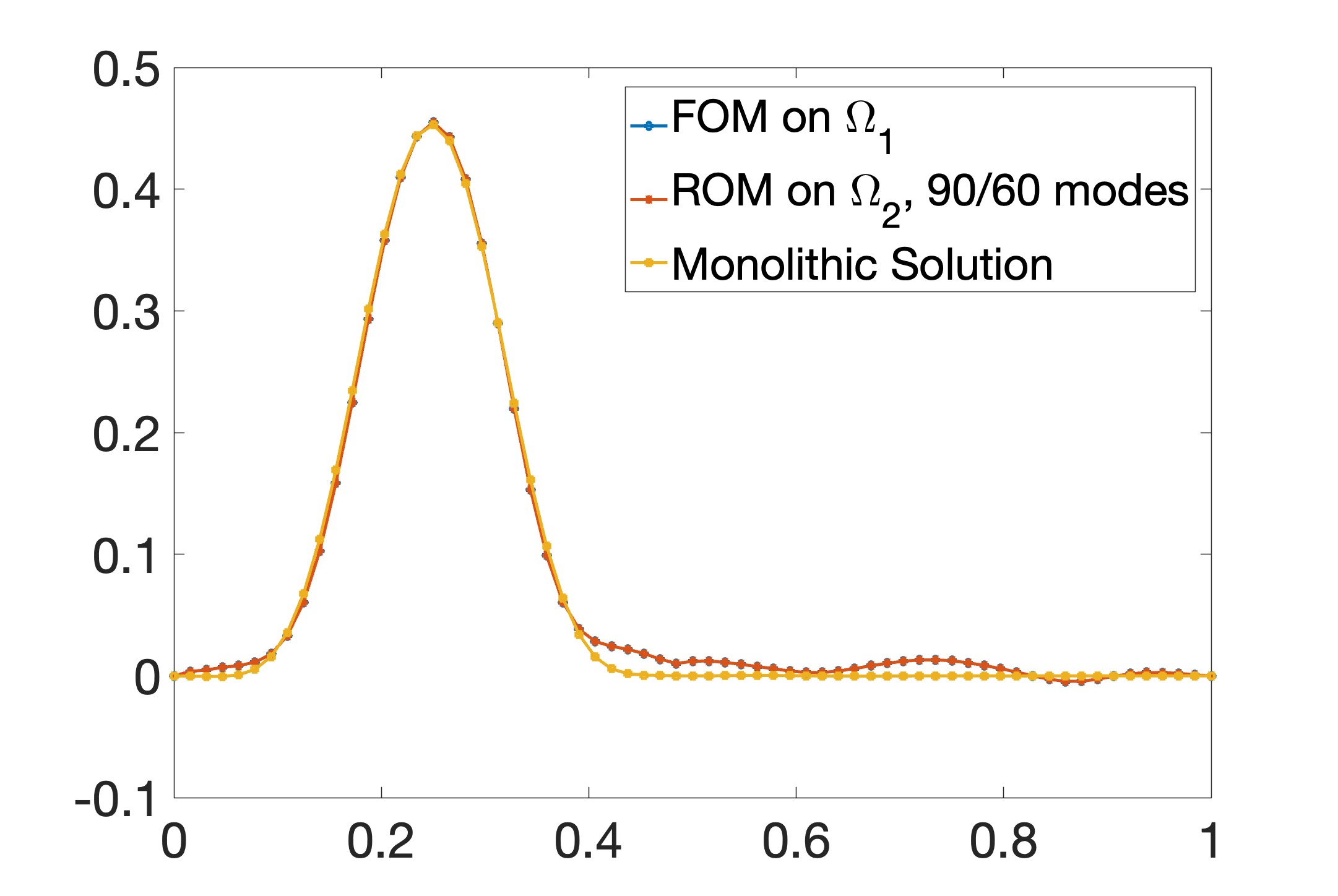}\label{AdC:fig:FRfLM_Inter_90_60_multi}} 
\subfigure[FR-rLM, 90/60 modes]{\includegraphics[scale=.070]{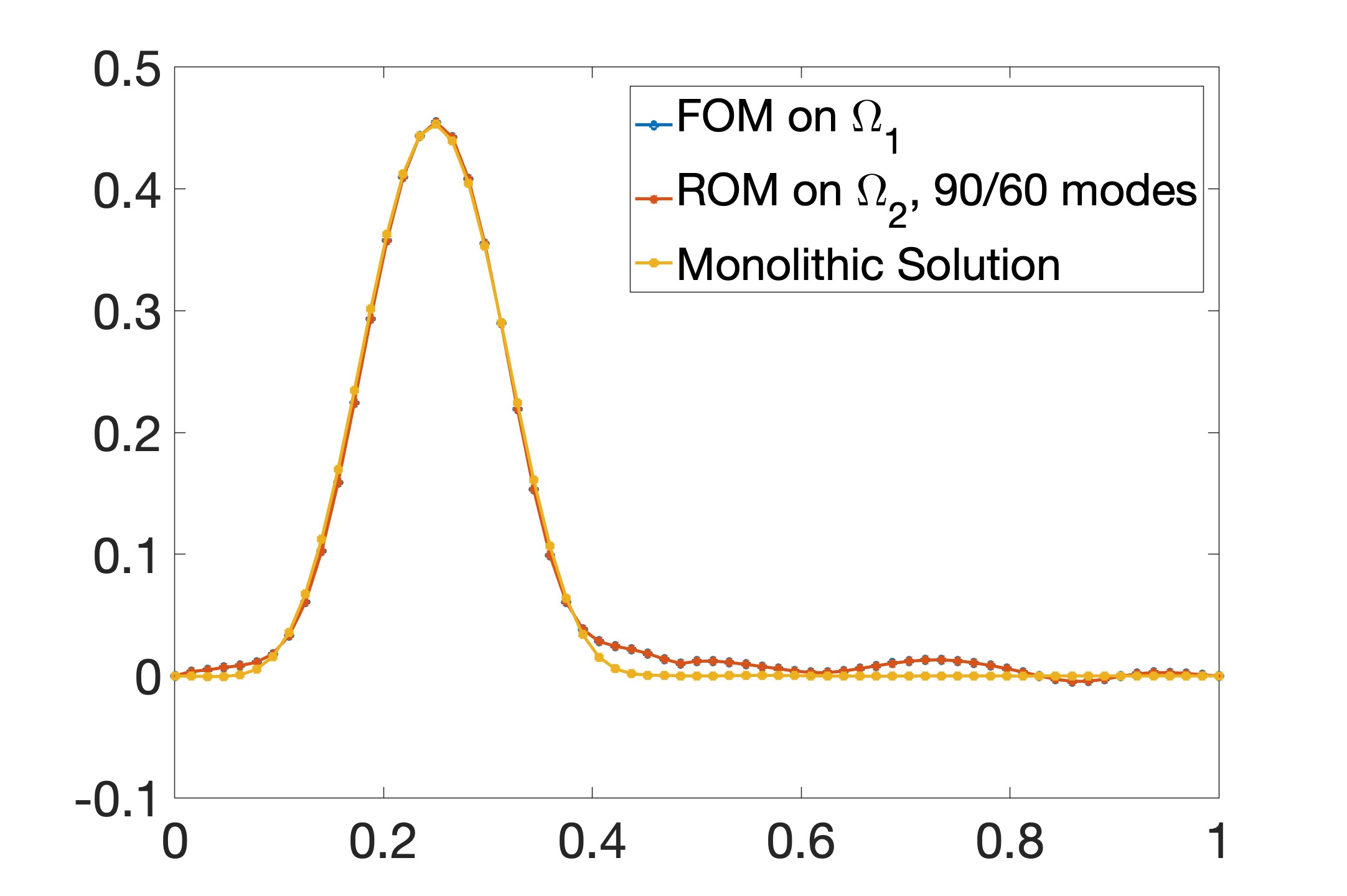}\label{AdC:fig:FRrLM_Inter_90_60_multi}} 
\caption{\REV{Comparison of the interface states at $T_f$ for the partitioned schemes  with provably well-posed Schur complements vs. the single domain (monolithic) solution of the model problem. Predictive test in the TP setting. The oscillations in the FR-rLM formulations with ``small'' RB sizes are due to accumulation of interface errors during the time integration caused by the approximate enforcement \eqref{eq:inequality} of the coupling condition. The legend ``$m/n$ modes'' corresponds to $m$ interior and $n$ interface modes.}}\label{AdC:fig:InterPlots_MultiPhysics}
\end{figure}



\section{Conclusions} \label{sec:conc}
\REV{The main contributions of this paper are (i)} extension 
 of the IVR scheme to the partitioned solution of coupled ROM-ROM and ROM-FOM problems, and
 \REV{(ii) identification of the trace compatibility condition as a key factor for the well-posedness of the IVR extensions.}
The IVR extension requires each coupled problem to have a non-singular Schur complement in order to compute accurate estimates of the interface flux needed to independently solve the subdomain equations at each time step. Moreover, the Schur complement must be well-conditioned for the partitioned scheme to be robust and accurate. 

Our previous numerical studies \cite{DeCastro_23_INPROC} revealed that the Schur complement of a coupled ROM-ROM problem, based on full subdomain bases and a full Lagrange multiplier space, can become severely ill-conditioned and compromise the accuracy of the coupling.
The key issue with this formulation is the lack of trace compatibility for the Lagrange multiplier space. However, the full subdomain basis does not provide a satisfactory setting to secure this property. In this paper, we demonstrated that a robust and effective solution to ensure well-posed Schur complements is to consider an alternative composite reduced basis comprising independently defined sets of basis vectors for the interface and interior variables, respectively. The interface reduced basis then provides a trace-compatible Lagrange multiplier space for coupled ROM-ROM problems. For coupled ROM-FOM problems, one can use either the reduced interface basis from the ROM side or the interface finite element space from the FOM side. 

Using variational techniques, we proved rigorously that these choices of the Lagrange multiplier space lead to coupled problems with non-singular Schur complements whose condition numbers are independent of the underlying FOM mesh size and the dimension of the composite reduced basis.
\REV{We performed numerical experiments in two distinct simulations settings. In both cases, numerical results confirmed the theoretical analysis and demonstrated the accuracy of the partitioned schemes. The consistent behavior of the well-posed formulations in both simulation settings further underscores the  importance of the trace compatibility condition and the robustness of the schemes.}

Our future work will focus on extension of the partitioned schemes in this paper to nonlinear problems where one has to consider techniques such as DEIM \cite{AdC:DEIM}, gappy POD \cite{AdC:Gappy} or the 
ECSW method \cite{AdC:ECSW} to handle the nonlinear terms.

\section*{Acknowledgments}
This material is based upon work supported by the U.S. Department of Energy, Office of Science, Office of Advanced Scientific Computing Research, Mathematical Multifaceted Integrated Capability Centers (MMICCs) program, under Field Work Proposal 22-025291 (Multifaceted Mathematics for Predictive Digital Twins (M2dt)), Field Work Proposal 20-020467, and the Laboratory Directed Research and Development program at Sandia National Laboratories.  
 The writing of this manuscript was funded in part by the fourth author's (Irina Tezaur's)
Presidential Early Career Award for Scientists and Engineers (PECASE). 

This article has been authored by an employee of National Technology \& Engineering Solutions of Sandia, LLC under Contract No. DE-NA0003525 with the U.S. Department of Energy (DOE). The employee owns all right, title and interest in and to the article and is solely responsible for its contents. The United States Government retains and the publisher, by accepting the article for publication, acknowledges that the United States Government retains a non-exclusive, paid-up, irrevocable, world-wide license to publish or reproduce the published form of this article or allow others to do so, for United States Government purposes. The DOE will provide public access to these results of federally sponsored research in accordance with the DOE Public Access Plan https://www.energy.gov/downloads/doe-public-access-plan.

\bibliographystyle{elsarticle-num}
\bibliography{AmydeCastro}

\end{document}